\newcommand{\eps}{\varepsilon}
\newcommand{\eq}{\mathrm{eq}}
\newcommand{\norm}[1]{\left\|#1\right\|}
\newcommand{\R}{\mathbb{R}}
\newcommand{\dd}{\mathrm{d}}
\newcommand{\init}{\mathrm{in}}
\newcommand{\blfootnote}[1]{%
  \begingroup
  \renewcommand\thefootnote{}\footnote{#1}%
  \addtocounter{footnote}{-1}%
  \endgroup
}
\newcommand{\smallO}{
  \mathchoice
    {{\scriptstyle\mathcal{O}}}
    {{\scriptstyle\mathcal{O}}}
    {{\scriptscriptstyle\mathcal{O}}}
    {\scalebox{.7}{$\scriptscriptstyle\mathcal{O}$}}
  }
\newtheorem{theorem}{Theorem}
\newtheorem{lemma}{Lemma}
\newtheorem{proposition}{Proposition}
\newtheorem{conjecture}{Conjecture}
\newtheorem{remark}{Remark}
\newtheorem{corollary}{Corollary}
\def\blfootnote{\xdef\@thefnmark{}\@footnotetext}
\author[A. Bondesan]{Andrea Bondesan}
\address{Andrea Bondesan \hfill\break
	Department of Mathematical, Physical and Computer Sciences \hfill\break 
    	University of Parma \hfill\break
	Parco Area delle Scienze 53/A, 43124 Parma, Italy \vspace*{2mm}}
\email{andrea.bondesan@unipr.it \vspace*{5mm}}
\author[J. Borsotti]{Jacopo Borsotti}
\address{Jacopo Borsotti \hfill\break
	Department of Mathematical, Physical and Computer Sciences \hfill\break
	University of Parma \hfill\break
	Parco Area delle Scienze 53/A, 43124 Parma, Italy \vspace*{2mm}}
\email{jacopo.borsotti@unipr.it \vspace*{5mm}}
\title[Opinion dynamics driven by social activity]{Control strategies and trends to equilibrium for kinetic models of opinion dynamics driven by social activity}
\begin{document}

\begin{abstract}
    We introduce new kinetic equations modeling opinion dynamics inside a population of individuals, whose propensity to interact with each other is described by their level of social activity. We show that opinion polarization can arise among agents with a low activity level, while active ones develop a consensus, highlighting the importance of social interactions to prevent the formation of extreme opinions. Moreover, we present a realistic control strategy aimed at reducing the number of inactive agents and increasing the number of socially active ones. At last, we prove several (weak and strong) convergence to equilibrium results for such controlled model. In particular, by considering additional interactions between individuals and opinion leaders capable of steering the average opinion of the population, we use entropy method-like techniques to estimate the relaxation toward equilibrium of solutions to a Fokker--Planck equation with time-dependent coefficients.
\end{abstract}

\maketitle

\tableofcontents

\section{Introduction} 

    \noindent In recent years, the modeling of opinion dynamics has attracted the interest of researchers from different scientific fields, such as social sciences and mathematics. This growing popularity follows from the pervasive influence of social networks and global events that have shown how collective opinions can shape our society. For instance, the public debate around the use of vaccines during the COVID-19 pandemic has been characterized by the emergence of a significant number of conspiracy theories and misinformation, highlighting the importance of studying the processes behind opinion formation \cite{28}. In particular, it would be valuable to understand how collective opinions can polarize toward potentially dangerous, extreme opinions, or can converge toward a consensus. It is important to remark that individuals are constantly influenced by other people and by the media, and the evolution of their opinions strongly depends on their propensity to interact and discuss with others. As an example, one can think of the current political scenario in various European countries, where the phenomenon of abstentionism has continued growing in the past few years \cite{12, 14, 13}, and an increasingly large part of the population has lost interest in finding out about the latest relevant news.  
    
    Over time, several different models have been developed to mimic opinion dynamics inside a society composed of a large number of individuals. Early models \cite{22,23,24,25,26,27} aimed to capture collective behavior using techniques borrowed from statistical mechanics: the society was represented as a graph, while the agents as nodes interacting with their neighbors. Later, thanks to Toscani’s seminal work \cite{8}, kinetic models emerged as a powerful alternative to the previous approaches, finding various applications for instance in the contexts of opinion formation \cite{MR3945372,BouSal,3,DurWol,MR3945374,6}, wealth distribution in societies \cite{BisSpiTos,11,FurPulTerTos,MR4093946,15}, or epidemics spreading \cite{2,DelLoyTos,DimPerTosZan,16}. Kinetic models describe opinion formation through binary interactions between agents. It is typically assumed that an individual's opinion evolves according to two mechanisms: compromise dynamics, where agents vary their opinions through interactions with others, and self-thinking dynamics. It is also possible to model opinion variations arising from interactions with opinion leaders (e.g., the media or highly influential individuals), which are supposed to be unaffected by individuals' opinions \cite{2}.
    
    The main assumption behind these kinetic models is that when two agents interact, only small variations of their opinions can occur. Considering this quasi-invariant regime of opinions, Toscani \cite{8} showed that their dynamics can be described by Fokker--Planck-type equations. The great advantage is that the equilibria of the latter can be computed analytically, and hence offer a clear description of key phenomena like consensus formation and opinion polarization. These equilibria are crucial to understand how extreme opinions emerge within a society. Preventing their formation is particularly important, especially in contexts where polarized views can have harmful consequences. For example, in \cite{16} it is shown that opinion polarization against protective behaviors during an epidemic can lead to a significant increase in the number of infections, highlighting the real-world impact of opinion dynamics. 

    Inspired by the work of Moussa\"id \cite{1}, in this paper we present a new kinetic model describing opinion formation in a population where each individual's propensity to interact with others is characterized by a (variable) social activity. In particular, each agent is associated with two microscopic traits being an opinion $w$ and a level of activity $A$, and depending on the value of $A$ we distinguish between three classes of individuals: active agents that are socially involved and are likely to interact with others, inactive agents who have no propensity to discuss with others and therefore have a small probability to interact, undecided agents who display a behavior in-between the two previous groups of individuals. We also expect that the activity level $A$ of an individual increases as they interact with others, while it gradually fades away over time when no interactions occur. Moreover, we assume that $A$ evolves independently of $w$, but it still modulates the exchanges of opinions. The main practical result of this paper is the demonstration that opinion polarization is more likely to occur among inactive agents. Indeed, active ones are more prone to reach a consensus, highlighting the importance of social interactions to prevent the formation of dangerous extreme opinions outside a negligible portion of the population. This phenomenon is made precise by analyzing the partial equilibria of the model, which are described by Beta distributions. We therefore introduce a realistic control strategy with the aim to increase the overall number of active individuals and reduce the number of inactive ones. This strategy is designed to reproduce any sudden growth or decline of interest in a certain piece of information, occurring among a fraction of the population. We will show that when those two phenomena are correctly balanced the control strategy is effective, while if such balance is not met an opposite outcome might arise. From a mathematical point of view, our most interesting results concern the study of (weak and strong) convergence to equilibrium for solutions to the controlled model. In particular, considering the effects of additional interactions with opinion leaders, we exploit the relative entropy method \cite{20} to prove the relaxation to equilibrium for a Fokker--Planck equation with time-dependent coefficients, whose adjoint is the well-known Wright--Fisher-type model \cite{CheStr,EpsMaz1,EpsMaz2}. 

    The paper is organized as follows. In Section \ref{sec1} we introduce our model, considering both interactions among the agents and with opinion leaders. In particular, starting from a Boltzmann description of these microscopic interactions, we derive the corresponding Fokker--Planck approximation in a quasi-invariant regime of the parameters, allowing us to study the evolution of the relevant macroscopic quantities. We also investigate how the individuals' activity level is related to opinion polarization and consensus formation phenomena. At last, we compare our model with other kinetic descriptions of opinion dynamics present in the literature. In Section \ref{control} we propose a control strategy to reduce the number of inactive agents and increase the number of active ones. We determine under which circumstances the strategy is effective and we discuss its interpretation in terms of a real-world case scenario. In Section \ref{compactness} we determine some analytical properties of the models (both the uncontrolled model and the controlled one) and we prove several results of relaxation to equilibrium for their solutions. In particular, we separately investigate the evolutions of the activity level and of the opinion, in terms of their marginal distributions. Our main theoretical result of the paper is the proof of convergence to equilibrium for solutions to a Fokker--Planck equation with time-dependent coefficients, based on rigorous relative entropy estimates that allow us to determine the rate of relaxation. We conclude this work in Section \ref{concl}, by discussing some possible future developments of our research. 

\section{A kinetic model for opinion dynamics with variable activity level} \label{sec1}

    \noindent Let us consider a large population of indistinguishable agents, each characterized by their opinion $w$ and by their level of activity $A$, describing how active they are in seeking information and discussing the issue with their acquaintances. We assume that $A \in \R$ and $w \in I = [-1,1]$, where the values $w = \pm 1$ correspond to two opposite beliefs. The opinion formation process is characterized by two main mechanisms \cite{8}: 
    \begin{itemize}
        \item a compromise dynamics, i.e., individuals tend to settle their differences of opinion; \\[-3mm]
        \item an opinion fluctuation, i.e., every interaction between individuals is associated with a variation in opinions due to self-thinking. 
    \end{itemize}
    Similarly, the level of activity of an agent varies according to two processes \cite{1}:
    \begin{itemize}
        \item it increases when an agent is willing to interact; \\[-3mm]
        \item it gradually decreases over time. 
    \end{itemize}
    The evolution of the level of activity does not depend on the opinion dynamics, but the probability of considering the compromise dynamics of the opinion exchanges depends on the level of activity. Indeed, as an agent's level of activity increases, we expect that they will be more willing to share their opinion and to be influenced by the others' opinions. Each individual interacts with the other agents of the population and eventually also with opinion leaders \cite{2}. Opinion leaders are characterized only by their opinion since we assume that it is always possible to interact with them.  

    Let us start by modeling the agent--agent interactions. Consider an agent characterized by a level of activity and an opinion defined by the couple $(A,w)$. When they interact with another agent having activity level and opinion $(B,v)$, the probability that their opinion varies due to compromise dynamics depends on the respective levels of activity $A$ and $B$. Following \cite{1}, we introduce a discrete random variable $\Tilde{A}_p$ (prescribing when a change of opinion occurs) that assumes values in $\{0, 1\}$ with probability
    \begin{equation*} \label{eq1}
        P(\Tilde{A}_p=1)=\bar{A} \omega_p+\varepsilon,
    \end{equation*}
    where (see Figure \ref{f1}) 
    \begin{align} \label{eq2}
        \bar{A}= 
        \begin{cases}
            1 &\textnormal{if}   \; \; A \in \left[\gamma, +\infty\right), \\[2mm] 
            \frac{1}{2} + \frac{A}{ 2\gamma} & \textnormal{if}  \;\; A \in \left(-\gamma,  \gamma\right), \\[2mm] 
            0 & \textnormal{if} \;\;  A \in \left(-\infty, -\gamma\right], 
        \end{cases}
    \end{align}
    and $\omega_p,   \varepsilon,    \gamma > 0$ are given constants. The population is therefore divided into three parts: 
    \begin{itemize}
        \item the individuals with a level of activity larger than $\gamma$ are the active ones; \\[-3mm]
        \item the individuals with a level of activity lower than $- \gamma$ are the inactive ones; \\[-3mm]
        \item the individuals with a level of activity between $- \gamma$ and $ \gamma$ are the undecided ones, where we refer to the indecision regarding whether or not to participate in exchanges in social comparisons, not regarding their opinion about a certain topic. 
    \end{itemize}
    Due to the constant $\varepsilon$, inactive agents still have a small probability to interact. Clearly, $\omega_p + \varepsilon < 1$ and $\varepsilon < \omega_p$. The post-interaction opinion and level of activity $(A',w')$ are given by 
    \begin{align} \label{eq3}
    \begin{cases} 
        w'= w + \tilde{A}_p   \tilde{B}_p  G(w,v)   \lambda_p   (v-w) + D(w)   \eta_p, \\[2mm] 
        A'= A+\lambda_A(\tilde{A}_p - a_p),
    \end{cases}
    \end{align}
    where $\lambda_p, \lambda_A \in (0,1)$. The function $G(w,v) \in [0, 1]$ describes how strongly the opinion $v$ affects an agent with opinion $w$, therefore $G(w,v)=\bar{G}(|w-v|)$ where $\bar{G}$ is a decreasing function such that $\bar{G}(0)=1$. The opinion fluctuation is modeled through the centered random variable $\eta_p$ which has finite variance $\sigma_p^2>0$. If $G \equiv 1$, the constant $\lambda_p$ would measure the propensity to move toward the average opinion in the society (compromise dynamics), while the variance $\sigma_p^2$ measures the degree of spreading of opinion because of diffusion (self-thinking). The function $D(w) \in [0, 1]$ encodes the relevance of the diffusion, for example agents with an indifferent opinion ($w \simeq 0$) diffuse the most ($D(0)=1$), while those with an extreme opinion ($w \simeq \pm 1$) are less influenced by external factors ($D(\pm 1)=0$). Finally, $a_p \in (0,1)$ represents the gradual decrease of the activity level, which in turn can increase at each step according to $\tilde{A}_p$. 

    \begin{figure}[t]
    \includegraphics[width=8cm]{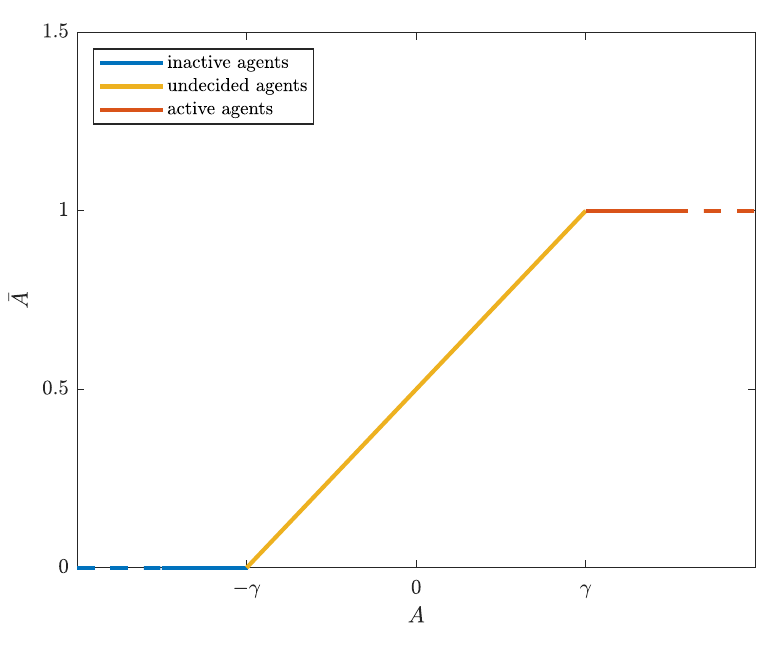}
    \caption{Graph of the function $\bar{A}$ defined by (\ref{eq2}). The three different parts of the population are highlighted.} \label{f1}
    \end{figure} 

    The interactions with a leader having opinion $z \in I$ are modeled in a similar way, the only difference is that only the activity level of the agent is considered since we assume that it is always possible to interact with them, namely
    \begin{align} \label{eq4}
    \begin{cases}
        w'=  w + \tilde{A}_\ell   G(w,z)   \lambda_\ell   (z-w) + D(w) \eta_\ell, \\[2mm] 
        A'= A+\lambda_A(\tilde{A}_\ell - a_\ell), 
    \end{cases}
    \end{align}
    where $\lambda_\ell, a_\ell \in (0,1)$ are given constants, $\eta_\ell$ is a centered random variable with finite variance $\sigma_\ell^2>0$, and $\Tilde{A}_l$ is a discrete random variable taking values $\{0, 1\}$ with probability
    \begin{equation*} \label{eq5}
        P(\tilde{A}_\ell=1)=\bar{A} \omega_\ell+\varepsilon,
    \end{equation*}
    where $\omega_\ell>0$, $\omega_\ell + \varepsilon < 1$, and $\varepsilon < \omega_\ell$. 

    Following \cite{8}, we may assume that 
    \begin{equation*}
        D(w)=\sqrt{1-w^2}, 
    \end{equation*}
    which ensures that the opinions $w'$ and $z'$ stay in the interval $I$. Note however that several other choices are possible \cite{6, 8}.

    The value of the parameter $a_\bullet$ leads to different situations. Fixed a level of activity $A$, we get 
    \begin{equation*} \label{eq8}
        \left< \tilde{A}_\bullet - a_\bullet \right> = \bar{A} \omega_\bullet + \varepsilon - a_\bullet,
    \end{equation*}
    where $\left<\cdot\right>$ denotes the mean with respect to the random variable. Therefore the following cases must be distinguished: 
    \begin{enumerate} [(i)]
        \item if $a_\bullet \in (0, \varepsilon)$ then, on average, the activity increases for any individual; \\[-3mm]
        \item if $a_\bullet \in (\varepsilon, \omega_\bullet + \varepsilon)$ then, on average, the activity of an inactive individual decreases while the activity of an active one increases; \\[-3mm]
        \item if $a_\bullet \in (\omega_\bullet + \varepsilon, +\infty)$ then, on average, the activity decreases for any individual. 
    \end{enumerate}
    A value of $a_\bullet$ can be chosen by fixing $M> \gamma$ and by imposing that 
    \begin{equation*} \label{eq9}
        \int_{-M}^M \left(  \bar{A} \omega_\bullet + \varepsilon - a_\bullet \right) \dd A = 0,
    \end{equation*}
    which leads to 
    \begin{equation} \label{eq10}
        a_\bullet = \frac{\omega_\bullet}{2} + \varepsilon. 
    \end{equation}
    This choice will ensure that (see Propositions \ref{prop3} and \ref{prop4}), if the agents are initially symmetrically distributed with respect to the level of activity $A$, the average level of activity of the population does not change over time. Notice that we fall into case (ii). 

    Consider now the binary interaction in (\ref{eq3}). Since $G$ is symmetric, we find 
    \begin{equation} \label{eq11}
    \begin{split}
        \left\langle w'+v'\right\rangle & = w + v + \tilde{A}_p   \tilde{B}_p   \lambda_p   (G(w,v) - G(v,w))     (v-w) \\[2mm] 
        & = w+v, 
    \end{split}
    \end{equation}
    therefore this (microscopic) binary interaction preserves the average opinion. On the other hand, the binary interaction in (\ref{eq4}), on average, brings the agent opinion closer to the one of leaders since they are not affected by the population opinion. 

    In the kinetic approach, the statistical description of the agents of the population is obtained through the distribution function $f=f(t,A,w),   t \in \R_+$. This means that  $f(t,A,w) \dd A \dd w$ represents the number of agents with opinion in $[w, w+\dd w]$ and level of activity in $[A, A+\dd A]$ at time $t \in \R_+$. Without loss of generality, we may assume that $\int_{\R\times I} f(0,A,w) \dd A \dd w = 1$. We will be interested in studying the evolution of the mass fractions of each part of the population, i.e., active, undecided and inactive, respectively given by
    \begin{align} 
        \rho_a(t) &= \int_{(\gamma, +\infty) \times I} f(t,A,w) \dd A \dd w, \nonumber \\[2mm]
        \rho_u(t) &= \int_{(-\gamma, \gamma) \times I} f(t,A,w) \dd A \dd w, \label{eq:mass fractions} \\[2mm]
        \rho_i(t) &= \int_{(-\infty, -\gamma) \times I} f(t,A,w) \dd A \dd w, \nonumber
    \end{align}
    and clearly $\sum_{j \in \{a,u,i\}}\rho_j(0)=1$. Moreover, we will also investigate the evolution of the average population opinion
    \begin{equation} \label{eq:m_w}
         m_w(t)=\frac{1}{\displaystyle \sum_{j \in \{a,u,i\}}\rho_j(t)}\int_{\R\times I} w f(t,A,w) \dd A \dd w, 
    \end{equation}
    and of the average level of activity of the population 
    \begin{equation} \label{eq:m_A}
         m_A(t)=\frac{1}{\displaystyle \sum_{j \in \{a,u,i\}}\rho_j(t)}\int_{\R\times I} A f(t,A,w) \dd A \dd w. 
    \end{equation}
    Finally, we shall denote with $f_\ell=f_\ell(z)$ the (static) opinion distribution of the leaders, satisfying $\int_I f_\ell(z)   \dd z = 1$, and with $\mu_\ell = \int_I z f_\ell(z)   \dd z$ their constant average opinion.  

    By classical methods of kinetic theory \cite{9} and resorting to the derivation of the classical linear Boltzmann equation for elastic rarefied gases \cite{10}, one can show that the time variation of the distribution function $f(t,A,w)$ depends on a sequence of elementary binary interactions of type (\ref{eq3}) and (\ref{eq4}). In our case, its evolution is given by the integro-differential equation
    \begin{equation*}
    \begin{split}
        \partial_t f(t,A,w) &= \left\langle \int_{\R \times I} \frac{1}{J(A',w')} \big(f(t,A',w') f(t,B',v') - f(t,A,w) f(t,B,v) \big) \dd B \dd v  \right\rangle \\[2mm]
        & \qquad + \left\langle \int_{I} \frac{1}{J(A',w')} \big(f(t,A',w') - f(t,A,w) \big) f_\ell(z) \dd z  \right\rangle,
    \end{split}
    \end{equation*}
    where $J(A',w')$ denotes the Jacobian of the transformation $(A',w') \mapsto (A,w)$. Here, the first operator translates the interactions agent--agent (\ref{eq3}), while the second one accounts for the interactions agent--leaders (\ref{eq4}). In particular, given some smooth functions $\varphi=\varphi(A,w)$ (the observable quantities), the above relation is better understood written in weak form
    \begin{equation} \label{eq25}
    \begin{split}
        \frac{\dd}{\dd t} \int_{\R\times I} \varphi(A,w) f(t,A,w) \dd A \dd w &= \int_{\R^2\times I^2}  \left<\varphi(A',w')-\varphi(A,w)\right> f(t,A,w) f(t,B,v)   \dd A   \dd B    \dd w   \dd v \\[2mm]
        & \qquad + \int_{\R\times I^2} \left<\varphi(A',w')-\varphi(A,w)\right> f(t,A,w) f_\ell(z) \dd A \dd w   \dd z.
    \end{split}
    \end{equation} 

    The choice $\varphi \equiv 1$ in (\ref{eq25}) leads to the conservation of mass, i.e., the total number of agents inside the population does not change. Therefore $\sum_{j \in \{a,u,i\}}\rho_j(t)=1$ for all $t \in \R_+$. Considering now the sole effects of the interactions agent--agent, since an alternative formulation of (\ref{eq25}) is given by
    \begin{equation} \label{eq26}
    \begin{split}
        \frac{\dd}{\dd t} \int_{\R\times I} \varphi(A,w) f(t,A,w) \dd A \dd w = \frac{1}{2}\int_{\R^2\times I^2} \big\langle\varphi(A',w') + & \varphi(B',v') - \varphi(A,w) - \varphi(B,v)\big\rangle \\[2mm] 
        & \times f(t,A,w) f(t,B,v)   \dd A   \dd B    \dd w   \dd v, 
    \end{split}
    \end{equation}
    the choice $\varphi(A,w)=w$ also shows the conservation of the average opinion $m_w$, thanks to (\ref{eq11}). 

\subsection{Derivation of a mean-field description} \label{subsec1} 

    In order to study the evolution of the macroscopic quantities previously introduced, following \cite{8} we perform a quasi-invariant limit procedure based on the standard rescalings
    \begin{equation*} \label{eq27}
        \lambda_\bullet \mapsto \epsilon \lambda_\bullet, \qquad \sigma_\bullet^2 \mapsto \epsilon \sigma_\bullet^2, \qquad t \mapsto \epsilon t,
    \end{equation*}
    where $0< \epsilon \ll 1$, and continuing to denote with $f(t,A,w)$ the corresponding rescaled distribution for the sake of simplicity.
    The above imply that the interactions (\ref{eq3}) and (\ref{eq4}) produce only an extremely small variation of the opinion and of the level of activity, while the frequency of interactions is increased accordingly. Our objective is to derive a Fokker--Planck-type equation that describes the evolution of the rescaled distribution $f(t,A,w)$. Since the calculations are cumbersome, for the sake of clarity we only sketch them in the case of agent--agent interactions (the interactions with the leaders can be treated in the exact same way). First, we Taylor expand the terms encoding the binary interaction in (\ref{eq3}), to recover
    \begin{equation*}
    \begin{split}
        \varphi(A',w') - \varphi&(A,w) = \partial_w \varphi (A,w) (w' - w) + \partial_A \varphi (A,w) (A'-A) \\[2mm] 
        & + \frac{1}{2} \partial^2_w \varphi (A,w) (w' - w)^2 + \partial^2_{wA} \varphi (A,w) (A'-A)  (w'-w) + \frac{1}{2} \partial^2_A \varphi (A,w) (A'-A) \\[4mm] 
        &+ R_\epsilon(A,A',w,w'), 
    \end{split}
    \end{equation*}
    where $R_\epsilon(A,A',w,w')$ denotes the remainder of the expansion. Hence, we can rewrite (\ref{eq25}) as 
    \begin{equation*}
    \begin{split}
        \frac{\dd}{\dd t} \int_{\R\times I} \varphi(A,w) & f(t,A,w) \dd A \dd w =\\[2mm]
         & \frac{1}{\epsilon} \int_{\R^2\times I^2} \partial_w \varphi (A,w) \left< w' - w \right> f(t,A,w) f(t,B,v)   \dd A   \dd B    \dd w   \dd v \\[2mm] 
        +& \frac{1}{\epsilon} \int_{\R^2\times I^2}  \partial_A \varphi (A,w) \left< A' - A \right> f(t,A,w) f(t,B,v)   \dd A   \dd B    \dd w   \dd v \\[2mm] 
        +& \frac{1}{2\epsilon} \int_{\R^2\times I^2}  \partial^2_w \varphi (A,w) \left<(w' - w)^2\right> f(t,A,w) f(t,B,v)   \dd A   \dd B    \dd w   \dd v \\[2mm] 
        +& \frac{1}{\epsilon} \int_{\R^2\times I^2}  \partial^2_{wA} \varphi (A,w) \left<(A'-A)  (w'-w)\right> f(t,A,w) f(t,B,v)   \dd A   \dd B    \dd w   \dd v \\[2mm] 
        +& \frac{1}{2\epsilon} \int_{\R^2\times I^2}  \partial^2_A \varphi (A,w) \left< (A' - A)^2\right> f(t,A,w) f(t,B,v)   \dd A   \dd B    \dd w   \dd v \\[2mm] 
        +& \frac{1}{\epsilon} \int_{\R^2\times I^2} R_\epsilon(A,A',w,w') f(t,A,w) f(t,B,v)   \dd A   \dd B    \dd w   \dd v. 
    \end{split}
    \end{equation*} 
    Next, we use (\ref{eq3}) to substitute the expressions for $w'$ and $A'$ and we compute the means of the random variables to cancel out some terms, recalling that $\left< \eta_p \right>=0$. Then, taking the limit $\epsilon \to 0$, additional terms cancel out if one assumes in particular $\left< |\eta_p|^3 \right> < +\infty$, so that \cite{11,8} 
    \begin{equation*}
        \frac{1}{\epsilon} \int_{\R^2\times I^2} R_\eps(A,A',w,w') f(t,A,w) f(t,B,v)   \dd A   \dd B    \dd w   \dd v \underset{\epsilon \to 0}{\longrightarrow} 0. 
    \end{equation*}
    Finally, assuming no-flux boundary conditions and integrating back by parts in order to get rid of the derivatives of $\varphi(A,w)$ (after repeating the same calculations for the interactions with the leaders), we formally obtain a Fokker--Planck-type equation describing the evolution of $f(t,A,w)$ \cite{11,9,15,8}, of the form
    \begin{equation} \label{eq29}
        \partial_t f(t,A,w)=Q_p(f,f)(t,A,w)+Q_\ell(f, f_\ell)(t,A,w), 
    \end{equation}
    where 
    \begin{equation} \label{eq30}
    \begin{split}
        Q_p(f,f)(t,A,w) &= \frac{\sigma_p^2}{2} \partial^2_w\Big(D^2(w)f(t,A,w)\Big) +\lambda_p(\bar{A}\omega_p+\varepsilon)\partial_w\Big(\mathcal{K}[f](t,w)f(t,A,w)\Big)\\[2mm] 
        & \qquad -\lambda_A \partial_A\Big((\bar{A}\omega_p+\varepsilon-a_p)f(t,A,w)\Big)
    \end{split}
    \end{equation}
    is the collisional operator related to the interactions agent--agent, while 
    \begin{equation} \label{eq31}
    \begin{split}
        Q_\ell(f, f_\ell)(t,A,w) &= \frac{\sigma_\ell^2}{2}\partial^2_w\Big(D^2(w)f(t,A,w)\Big) +\lambda_\ell(\bar{A}\omega_\ell+\varepsilon)\partial_w\Big(\mathcal{J}[f_\ell](w)f(t,A,w)\Big)\\[2mm] 
        & \qquad -\lambda_A\partial_A\Big((\bar{A}\omega_\ell+\varepsilon-a_\ell)f(t,A,w)\Big),
    \end{split}
    \end{equation}
    is the collisional operator related to the interactions agent--leaders. In particular, we have defined 
    \begin{align*}
        \mathcal{K}[f](t,w) &= \int_{\R\times I} (\bar{B}\omega_p+\varepsilon)G(w,v)(w-v) f(t,B,v)   \dd B   \dd v, \\[2mm]
        \mathcal{J}[f_\ell](w) &= \int_I G(w,z)(w-z) f_\ell(z)   \dd z.
    \end{align*}
    As already mentioned, we also have to complete the Fokker--Planck equation (\ref{eq29}) with no-flux boundary conditions for any $t \in \R_+$, specifically
    \begin{align} \label{eq35}
    \begin{cases}
        \left. D^2(w) f(t,A,w) \right|_{w=\pm 1} = 0, \quad & \forall  A \in \R, \\[4mm] 
        \left.(\bar{A}\omega_p+\varepsilon)\lambda_p\mathcal{K}[f](t,w)f(t,A,w)+\frac{\sigma_p^2}{2} \partial_w\Big(D^2(w)f(t,A,w)\Big)\right|_{w=\pm 1} = 0, & \forall A \in \R, \\[4mm] 
        \left.(\bar{A}\omega_\ell+\varepsilon)\lambda_\ell\mathcal{J}[f_\ell](w)f(t,A,w)+\frac{\sigma_\ell^2}{2} \partial_w\Big(D^2(w)f(t,A,w)\Big)\right|_{w=\pm 1} = 0, & \forall A \in \R, \\[4mm] 
        \displaystyle \lim_{A \to \pm \infty} A^\kappa f(t,A,w) = 0, & \forall w \in I, 
    \end{cases}
    \end{align}
    where $\kappa \geq 1$, and the last condition ensures that the first $\kappa$-th moments in $A$ of $f(t,A,w)$ are finite. We will prove (see Proposition \ref{prop5}) that, if the initial distribution has a compact support (as one would expect), then the support of $f(t,A,w)$ remains bounded for any time $t > 0$, hence the last condition of \eqref{eq35} is not restrictive. Note that the first condition of (\ref{eq35}) is related to the property $D(\pm 1)=0$. Finally, notice that the parameters $\lambda_p$ and $\lambda_\ell$, measuring the agents' disposition to compromise, appear in the drift terms of (\ref{eq30}) and (\ref{eq31}), while the parameters $\sigma_p^2$ and $\sigma_\ell^2$, measuring the agents' propensity to self-thinking, appear as coefficients in the diffusion operators. Consequently, small values of $\nu_p = \sigma_p^2/\lambda_p$ and $\nu_\ell = \sigma_\ell^2/\lambda_\ell$ correspond to compromise-dominated opinion dynamics, while large values of these ratios denote self-thinking-dominated opinion dynamics.

\subsection{Evolution of the macroscopic quantities} \label{subsec2}

    In this subsection we are interested in studying the behavior of the mass fractions \eqref{eq:mass fractions} and of the means \eqref{eq:m_w}--\eqref{eq:m_A} of the solution to the Fokker--Planck equation (\ref{eq29}). In fact, this simplified formulation of the Boltzmann description \eqref{eq25} will allow us to derive explicit information regarding their evolution. 
    
    Let us start by considering only the interactions agent--agent, namely $\partial_t f = Q_p(f,f)$. Simple calculations show that the no-flux boundary conditions and the symmetry of the function $G$ ensure the conservation of mass and of the average opinion $m_w$, already obtained through the integro-differential equations (\ref{eq25}) and (\ref{eq26}). However, since the Fokker--Planck model is less complex than its Boltzmann counterpart (\ref{eq25}), we are now also able to study the evolution of the mass fractions \eqref{eq:mass fractions} and of the average activity level \eqref{eq:m_A}. Since 
    \begin{align} 
        \frac{\dd}{\dd t} \rho_a(t) &= \lambda_A(\omega_p+\varepsilon-a_p) \int_I f(t, \gamma, w)  \dd w, \label{eq37} \\[2mm]
        \frac{\dd}{\dd t} \rho_i(t) &= -\lambda_A(\varepsilon-a_p) \int_I f(t,- \gamma, w)   \dd w, \label{eq38}
    \end{align}
    the different ranges of values to which $a_p$ belongs (cases (i)--(iii) described at the beginning of the section) lead to different situations. Recalling that $\frac{\dd}{\dd t} \rho_u = -\frac{\dd}{\dd t} \rho_a - \frac{\dd}{\dd t} \rho_i$, we distinguish between the following three cases: 
    \begin{itemize} 
        \item case (i): $\rho_a$ increases, $\rho_i$ decreases, and $\rho_u$ can show different behaviors depending on the initial distribution; \\[-3mm]
        \item case (ii): $\rho_a$ and $\rho_i$ increase, therefore $\rho_u$ decreases due to the conservation of the total mass; \\[-3mm]
        \item case (iii): $\rho_a$ decreases, $\rho_i$ increases, and $\rho_u$ can show different behaviors depending on the initial distribution. 
    \end{itemize}
    Recall that the choice of $a_p$ given by (\ref{eq10}) belongs to case (ii). Notice that the integrals on the right-hand sides of (\ref{eq37}) and (\ref{eq38}) can be interpreted as two fluxes at $A= \gamma$ and $A=- \gamma$, respectively.

    Consider now the evolution of the average level of activity inside the population. The fourth boundary condition of (\ref{eq35}) gives 
    \begin{equation*} \label{eq42}
        \frac{\dd}{\dd t} m_A(t) = \lambda_A \int_{\R\times I} (\bar{A}\omega_p+ \varepsilon-a_p) f(t,A,w) \dd A \dd w, 
    \end{equation*}
    therefore different choices of $a_p$ lead to these situations:
    \begin{itemize} 
        \item case (i): $m_A$ increases; \\[-3mm] 
        \item case (ii): we cannot conclude since the behavior depends on the initial distribution; \\[-3mm]
        \item case (iii): $m_A$ decreases. 
    \end{itemize}
    Finally, in a particular but still significant situation, the following proposition holds. 

    \begin{proposition} \label{prop3}
        Assume that $m_A(t)$ is analytical. Suppose that for any fixed $w \in I$ the initial distribution $f^\init(A,w) = f(w,A, 0)$ is even with respect to $A$. If $a_p$ is given by (\ref{eq10}), then $m_A \equiv 0$. 
    \end{proposition}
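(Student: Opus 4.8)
The plan is to reduce the claim to a transport equation for the $A$-marginal of $f$ and to an odd/even symmetry in the activity variable, invoking the analyticity of $m_A$ only at the very end to pass from information at $t=0$ to all times. Throughout we work, as in the statement, with the agent-agent dynamics $\partial_t f=Q_p(f,f)$.

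I would start from the evolution law already recorded above, $\frac{\dd}{\dd t}m_A(t)=\lambda_A\int_{\R\times I}(\bar A\omega_p+\varepsilon-a_p)\,f(t,A,w)\,\dd A\,\dd w$, and observe that with the choice (\ref{eq10}) the weight equals $\omega_p\,g(A)$, where $g(A):=\bar A(A)-\tfrac12$. A look at (\ref{eq2}) shows that $g$ is an \emph{odd} function of $A$: it equals $\tfrac12$ on $[\gamma,+\infty)$, $A/(2\gamma)$ on $(-\gamma,\gamma)$, and $-\tfrac12$ on $(-\infty,-\gamma]$. This is the structural fact driving everything. Since the mass is conserved, $m_A(t)=\int_{\R\times I}A\,f(t,A,w)\,\dd A\,\dd w$, and integrating $\partial_t f=Q_p(f,f)$ over $w\in I$ eliminates the two $w$-derivative terms of (\ref{eq30}) — the resulting boundary terms at $w=\pm1$ vanish by the no-flux conditions (\ref{eq35}) — leaving the \emph{closed} transport equation $\partial_t F=LF$ for the marginal $F(t,A):=\int_I f(t,A,w)\,\dd w$, with $L\phi:=-\lambda_A\omega_p\,\partial_A(g\phi)$. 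By hypothesis $F(0,\cdot)$ is even in $A$, being a $w$-integral of functions even in $A$, so in particular $m_A(0)=\int_\R A\,F(0,A)\,\dd A=0$.

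The core observation is that $L$ preserves parity in $A$: if $\phi$ is even, then $g\phi$ is odd, hence $\partial_A(g\phi)$ is even, i.e.\ $L\phi$ is even. Differentiating the marginal equation in $t$ gives $\partial_t^kF=L^kF$, so $\partial_t^kF(0,\cdot)=L^kF(0,\cdot)$ is even for every $k\ge0$, and therefore $m_A^{(k)}(0)=\int_\R A\,(L^kF)(0,A)\,\dd A$ is the integral over $\R$ of an odd function, namely $0$ (integrability being guaranteed by the moment condition in (\ref{eq35}) together with Proposition \ref{prop5}). Since $m_A$ is analytic and all of its Taylor coefficients at $t=0$ vanish, $m_A\equiv0$.

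The one genuine difficulty is technical: justifying the identity $\partial_t^kF=L^kF$ and the differentiation under the integral sign, given that $g$ is only Lipschitz, so that each application of $L$ nominally costs a derivative. I would handle this by assuming the initial datum to be smooth with compact support — for which the marginal transport equation is classically well-posed and its solution carries all the $A$-regularity used above — while noting that the odd/even structure is stable under approximation since it depends only on the oddness of $g$. In fact the reflection $A\mapsto-A$ maps the transport equation to itself, so uniqueness already forces $F(t,\cdot)$ to remain even for all $t$, whence $m_A(t)\equiv0$ with no analyticity needed; the analyticity hypothesis is exactly what allows one to bypass such a well-posedness result and argue purely from the Taylor expansion at the origin.
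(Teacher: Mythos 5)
Your proof is correct and rests on the same core mechanism as the paper's: show that every Taylor coefficient of $m_A$ at $t=0$ vanishes because the weight $\bar A-\tfrac12$ is odd in $A$ while the initial datum is even, then invoke analyticity. The difference is in how the vanishing of the higher derivatives is established. The paper computes them explicitly, iterating integration by parts in the $A$-drift term to arrive at the closed formula $\left.\frac{\dd^n}{\dd t^n}m_A(t)\right|_{t=0}=\frac{\lambda_A^n\omega_p^n}{(2\gamma)^{n-1}}\int_{I\times(-\gamma,\gamma)}\bigl(\bar A-\tfrac12\bigr)f^\init\,\dd A\,\dd w$, whose integrand is odd on a symmetric domain. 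You instead isolate the structural reason this works: the marginal transport operator $L\phi=-\lambda_A\omega_p\,\partial_A(g\phi)$ with $g=\bar A-\tfrac12$ odd maps even functions to even functions, so $\partial_t^kF(0,\cdot)=L^kF(0,\cdot)$ is even and each $m_A^{(k)}(0)$ is the integral of an odd function. This is cleaner, avoids the explicit iterated computation (which in the paper quietly suppresses the contributions of $\partial_A\bar A$ at the corners $A=\pm\gamma$ — your version has the same regularity caveat but you state it honestly and indicate how to handle it by smooth approximation), and buys something extra: your closing remark that the reflection $A\mapsto-A$ leaves the marginal equation invariant, so that uniqueness alone forces $F(t,\cdot)$ to stay even and hence $m_A\equiv0$ without any analyticity assumption, is a genuine strengthening that the paper does not record. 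No gaps.
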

    \begin{proof}
        We want to show that $\left.\frac{\dd^n}{\dd t^n} m_A(t) \right|_{ t=0}=0$ for all $n \in \mathbb{N}$. The symmetry with respect to $A$ of $f^\init(A,w)$ ensures that $m_A(0)=0$ and
        \begin{equation*}
            \left.\frac{\dd}{\dd t} m_A(t) \right|_{ t=0} = \lambda_A \omega_p \int_{\R\times I} \left(\bar{A}-\frac{1}{2}\right) f^\init(A,w) \dd A \dd w = 0. 
        \end{equation*}
        A simple calculation shows that for all $n \geq 2$
        \begin{equation*}
            \left.\frac{\dd^n}{\dd t^n} m_A(t) \right|_{ t=0} = \frac{\lambda_A^n \omega_p^n}{ (2\gamma)^{n-1}} \int_{I \times \left(-\gamma, \gamma\right)} \left(\bar{A}-\frac{1}{2}\right) f^\init(A,w) \dd A \dd w = 0, 
        \end{equation*}
        therefore the result follows. 
    \end{proof}

    Consider now the effect of the leaders, namely $ \partial_t f = Q_p(f,f) + Q_\ell(f, f_\ell)$. In this situation, since the microscopic variations of $A$ in (\ref{eq3}) and (\ref{eq4}) have the same structure, the evolution of the activity level depends on $\omega_p+\omega_\ell$ and $a_p+a_\ell$, specifically 
    \begin{equation} \label{eq1000}
        \frac{\dd}{\dd t} m_A(t) = \lambda_A \int_{\R\times I} \left(\bar{A}(\omega_p+\omega_\ell)+ 2\varepsilon-(a_p+a_\ell)\right) f(t,A,w) \dd A \dd w. 
    \end{equation}
    In particular, a version of Proposition \ref{prop3} holds even in the presence of leaders. The important difference from the model without leaders is that now the mean opinion is not preserved since leaders tend to bring the agents' opinion closer to theirs, indeed
    \begin{equation*} \label{eq44}
        \frac{\dd}{\dd t} m_w(t) = -\lambda_\ell \int_{\R\times I^2} (\bar{A}\omega_\ell + \varepsilon) G(w,z) (w-z) f(t,A,w) f_\ell(z) \dd A \dd w   \dd z. 
    \end{equation*}
    For the specific case $G \equiv 1$, we get 
    \begin{equation} \label{eq45}
        \frac{\dd}{\dd t} m_w(t) = -\lambda_\ell \int_{\R\times I} (\bar{A}\omega_\ell + \varepsilon) (w-\mu_\ell) f(t,A,w) \dd A \dd w 
    \end{equation}
    and we conjecture the following. 
    
    \begin{conjecture} \label{prop2}
        Suppose that $G \equiv 1$, then the average opinion of the population $m_w(t)$ converges toward the average opinion of the leaders $\mu_\ell$, when $t \to +\infty$. 
    \end{conjecture}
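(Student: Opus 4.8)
The plan is to decompose the opinion deviation $e(t):=m_w(t)-\mu_\ell$ (which, by conservation of mass, equals $\int_{\R\times I}w\,f(t,A,w)\,\dd A\,\dd w-\mu_\ell$) according to the three subpopulations, writing $e(t)=E_a(t)+E_u(t)+E_i(t)$ with
\[
    E_a(t):=\int_{I\times(\gamma,+\infty)}(w-\mu_\ell)f(t,A,w)\,\dd A\,\dd w
\]
and $E_u,E_i$ defined analogously on the undecided and inactive slices (so that $|E_u(t)|\le 2\rho_u(t)$). One first shows that the band of undecided agents drains exponentially fast, so that asymptotically the weight $\bar{A}$ equals $1$ on the active slice and $0$ on the inactive one; the dynamics of the pair $(E_a,E_i)$ then collapses to a planar linear ODE that one checks to be asymptotically stable, and plugging this back into $e=E_a+E_u+E_i$ yields $e(t)\to 0$, i.e.\ the claim.

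\emph{Step 1: the undecided band drains.} Integrating (\ref{eq29}) in $w\in I$ and using the no-flux conditions (\ref{eq35}) together with the choice (\ref{eq10}) of $a_p,a_\ell$, the activity marginal $F(t,A):=\int_I f(t,A,w)\,\dd w$ solves the pure transport equation $\partial_t F+\partial_A(vF)=0$ with velocity $v(A)=\lambda_A(\omega_p+\omega_\ell)\big(\bar{A}(A)-\tfrac{1}{2}\big)$. Since $v>0$ on $(0,\gamma)\cup(\gamma,+\infty)$ and $v<0$ on $(-\infty,-\gamma)\cup(-\gamma,0)$, with $v$ continuous and nonvanishing at $A=\pm\gamma$, where it points away from the band, mass can only flow out of the undecided region $(-\gamma,\gamma)$ (through $A=\pm\gamma$), never into it; moreover, along the characteristics inside the band one has $\dot A=\alpha A$, with $\alpha:=\lambda_A(\omega_p+\omega_\ell)/(2\gamma)>0$. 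Hence, under the mild assumption that $F(0,\cdot)$ is bounded near $A=0$ (compatible with the compact-support statement of Proposition \ref{prop5}), one obtains the exponential bounds $\rho_u(t)\lesssim e^{-\alpha t}$ and $F(t,\pm\gamma)\lesssim e^{-\alpha t}$, as well as the convergences $\rho_a(t)\to\rho_a^\infty$ and $\rho_i(t)\to\rho_i^\infty$, with $\rho_a^\infty+\rho_i^\infty=1$ and rate $e^{-\alpha t}$.

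\emph{Step 2: slice ODEs and planar stability.} Testing (\ref{eq29}) against $(w-\mu_\ell)\mathbf{1}_{A>\gamma}$ and against $(w-\mu_\ell)\mathbf{1}_{A<-\gamma}$, and using $D^2(w)=1-w^2$, the identities $\mathcal{J}[f_\ell](w)=w-\mu_\ell$ and $\mathcal{K}[f](t,w)=\bar{\rho}_p(t)\big(w-m_p^\ast(t)\big)$ valid when $G\equiv 1$ (where $\bar{\rho}_p(t):=\int_{\R\times I}(\bar{B}\omega_p+\varepsilon)f\,\dd B\,\dd v\ge\varepsilon$ and $m_p^\ast(t):=\bar{\rho}_p(t)^{-1}\int_{\R\times I}(\bar{B}\omega_p+\varepsilon)v\,f\,\dd B\,\dd v$), the no-flux conditions (\ref{eq35}) (which, being imposed for every $A$, make both the diffusion contributions and the $w=\pm1$ boundary terms vanish on each slice), the compact support in $A$ of Proposition \ref{prop5} (killing the $A=\pm\infty$ boundary terms), and the fact that $\bar{A}\equiv 1$ on $(\gamma,+\infty)$ and $\bar{A}\equiv 0$ on $(-\infty,-\gamma)$, one arrives --- after reorganizing the $Q_p$-terms and absorbing into $O(e^{-\alpha t})$ both the interface fluxes at $A=\pm\gamma$ and the $O(\rho_u)$ corrections to $\bar{\rho}_p$ and $m_p^\ast$ --- at
\begin{align*}
    \dot E_a&=-\lambda_\ell(\omega_\ell+\varepsilon)E_a-\lambda_p\varepsilon(\omega_p+\varepsilon)\big(\rho_i E_a-\rho_a E_i\big)+O(e^{-\alpha t}),\\
    \dot E_i&=-\lambda_\ell\varepsilon\,E_i+\lambda_p\varepsilon(\omega_p+\varepsilon)\big(\rho_i E_a-\rho_a E_i\big)+O(e^{-\alpha t}),
\end{align*}
in which the $Q_p$-cross-terms cancel in $\dot E_a+\dot E_i$, reflecting conservation of $m_w$ under $Q_p$. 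Setting $c:=\lambda_p\varepsilon(\omega_p+\varepsilon)>0$ and letting $t\to+\infty$, the vector $\mathbf{E}:=(E_a,E_i)^\top$ satisfies $\dot{\mathbf{E}}=M_\infty\mathbf{E}+O(e^{-\alpha t})$ with
\[
    M_\infty=\begin{pmatrix}-\lambda_\ell(\omega_\ell+\varepsilon)-c\,\rho_i^\infty & c\,\rho_a^\infty\\ c\,\rho_i^\infty & -\lambda_\ell\varepsilon-c\,\rho_a^\infty\end{pmatrix},
\]
and a direct computation yields $\operatorname{tr}M_\infty=-\lambda_\ell(\omega_\ell+2\varepsilon)-c<0$ and $\det M_\infty=\lambda_\ell^2\varepsilon(\omega_\ell+\varepsilon)+c\,\lambda_\ell\big((\omega_\ell+\varepsilon)\rho_a^\infty+\varepsilon\rho_i^\infty\big)>0$, so both eigenvalues of $M_\infty$ have strictly negative real part. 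A standard Gr\"onwall estimate for a linear system with exponentially decaying forcing then gives $\mathbf{E}(t)\to 0$ exponentially, whence $e(t)=E_a(t)+E_u(t)+E_i(t)\to 0$, i.e.\ $m_w(t)\to\mu_\ell$, with an explicit rate.

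\emph{Main obstacle.} The whole difficulty is concentrated in Step 1 and in the boundary bookkeeping behind Step 2: one must make rigorous (i) the drainage $\rho_u(t)\to 0$ and its rate, which requires excluding a concentration of the initial datum at the stagnation point $A=0$ of $v$ and assuming a mild local bound on $F(0,\cdot)$ there; (ii) the control of the interface fluxes at $A=\pm\gamma$, which needs enough regularity of $f$ up to the hyperplanes $A=\pm\gamma$, where $\bar{A}$ is merely Lipschitz; and (iii) the repeated integrations by parts in $w$ and the cancellation of the $w=\pm1$ boundary terms through (\ref{eq35}), which rely on a well-posedness and regularity theory for (\ref{eq29})--(\ref{eq35}) not developed here. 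These are precisely the points a complete argument would have to settle, and the reason the result is stated only as a conjecture. A natural route for (ii)--(iii) is to couple Proposition \ref{prop5} with an activity-slice version of the relative-entropy estimate for the opinion variable (uniformly elliptic in $w$ away from $w=\pm1$), in the spirit of the paper's main theorem on Fokker--Planck equations with time-dependent coefficients.
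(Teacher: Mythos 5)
Your argument is essentially correct (modulo the technical caveats you flag yourself), but it follows a genuinely different route from the paper's proof, which appears as Proposition \ref{prop6} in Subsection \ref{evol2}. The paper first invokes the conservation of $m_w$ under agent--agent interactions to reduce to the leader-only dynamics, then uses the activity analysis of Subsection \ref{evol1} to conclude that for large times the population is either all active, all inactive, or split into fixed active/inactive fractions (under a negligibility assumption on the initial mass at the stagnation point $A^*$, the analogue of your local boundedness of $F(0,\cdot)$ near $A=0$); in each case $\dd m_w/\dd t$, or $\dd m_\bullet/\dd t$ on each slice, reduces to a scalar linear ODE of the form \eqref{eq45} with constant coefficient, giving exponential relaxation directly. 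You instead keep the $Q_p$ coupling at the slice level and prove asymptotic stability of the resulting $2\times 2$ system. This is more work, but it buys something real: $Q_p$ preserves the total average opinion but \emph{not} the slice averages $m_a$, $m_i$ individually, so the cross-terms $\rho_i E_a-\rho_a E_i$ you retain are genuinely present in $\dot m_a$, $\dot m_i$, whereas the paper's displayed ODE for $m_i$ drops them on the strength of the global conservation statement; your trace/determinant computation (which I have checked and which is correct) closes that gap. Two caveats: you restrict to the specific choice \eqref{eq10} of $a_p,a_\ell$, whereas the paper's proof covers all three ranges of $a_p+a_\ell$ (your Step 1 adapts with the stagnation point moved from $0$ to $A^*$, and in cases (i)/(iii) the planar system degenerates to the paper's scalar ODE); and, like the paper, you need the compact-support hypothesis on $f^\init$ (stated in Proposition \ref{prop6} but not in the conjecture) together with the well-posedness/regularity theory for \eqref{eq29} that neither argument develops.
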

    
    In Subsection \ref{evol2} we will prove Conjecture \ref{prop2} in a particular, but still realistic, case. 

\subsection{Opinion polarization and consensus formation} \label{pol_con}

    The Fokker--Planck equation (\ref{eq29}) does not admit a steady state, since the evolution of the activity level cannot reach an equilibrium (a rigorous proof of this fact will be presented in Section \ref{compactness}). Nonetheless, in order to understand how opinions are more likely to distribute over time inside the population, it is still possible to look for a partial equilibrium of the Fokker--Planck equation, namely for a distribution function which is not affected by the drift and diffusion operators prescribing the evolution in $w$. For the sake of clarity, we start by considering only the interactions agent--agent. The presence of opinion leaders will be discussed at the end of this subsection and will lead to analogous results. Moreover, suppose that $G \equiv 1$ and $D(w)=\sqrt{1-w^2}$. 
    
    For this, let us introduce the weighted (with weight $\bar{A} \omega_p + \varepsilon$) mass and average opinion of the population
    \begin{equation} \label{pesi}
    \begin{split}
        \rho_{\bar{A}}(t) &= \int_{\R\times I} (\bar{A}\omega_p+\varepsilon) f(t,A,w) \dd A \dd w, \\[2mm]
        m_{\bar{A}}(t) &= \int_{\R\times I} (\bar{A}\omega_p+\varepsilon) wf(t,A,w) \dd A \dd w,
        \end{split}
    \end{equation}
    which are well defined (i.e. they are finite) and not conserved over time. Then, the sought partial equilibria $f^{\eq, w}(t,A,w)$ are given by any distribution canceling out both terms of the Fokker--Planck operator $Q_p(f,f)$ that act on the opinion variable, i.e., satisfying the relation
    \begin{equation*}
        \lambda_p (\bar{A}\omega_p + \varepsilon)(w\rho_{\bar{A}}(t)-m_{\bar{A}}(t))f^{\eq,w}(t,A,w)+ \frac{\sigma^2_p}{2} \partial_w\Big((1-w^2)f^{\eq,w}(t,A,w)\Big)=0,
    \end{equation*}
    which, after computing the derivative with respect to $w$, leads to Beta distributions of the form
    \begin{equation} \label{eqbeta}
        f^{\eq, w}(t,A,w) = C(t,A) (1-w)^{-1+\frac{\bar{A}\omega_p+\varepsilon}{\nu_p} \big(\rho_{\bar{A}}(t)-m_{\bar{A}}(t)\big)}(1+w)^{-1+\frac{\bar{A}\omega_p+\varepsilon}{\nu_p} \big(\rho_{\bar{A}}(t)+m_{\bar{A}}(t)\big)},
    \end{equation}
    where $C(t,A)>0$ is a suitable normalization function ensuring that $\int_{\R\times I} f^{\eq, w}(t,A,w) \dd A \dd w = 1$ for any $ t > 0$. The partial equilibrium given by the Beta-type distributions (\ref{eqbeta}) can represent two different scenarios: 
    \begin{itemize}
        \item none of the exponents of the distribution are negative, thus $f^{\eq, w}(t,A,\pm 1)=0$ and, for fixed $t$ and $A$, the maximum of $f^{\eq, w}(t,A,w)$ is obtained for $w \in (-1,1)$. This situation corresponds to a consensus formation due to the compromise process of the binary interaction in (\ref{eq3}) \cite{3} (where full consensus would be associated to a Dirac delta distribution); \\[-3mm]
        \item at least one of the exponents is negative, hence the distribution diverges. This situation corresponds to an opinion polarization inside the population, and it is commonly driven by a self-thinking process that is stronger than the compromise propensity \cite{16}. 
    \end{itemize}
    Clearly the formation of (partial) consensus is preferable over opinion polarization, and in general one would like to prevent the formation of extreme opinions. For example, in \cite{16} it was shown that extreme opinions regarding a disease could trigger an increase in the spread of infection among individuals. Since the quantity $\bar{A}\omega_p+\varepsilon$ appears at the exponents of (\ref{eqbeta}), the scenario corresponding to active agents can be very different compared to the one where inactive agents are involved. In fact, noticing that since $|m_{\bar{A}}(t)| < \rho_{\bar{A}}(t)$ one has $\left(\rho_{\bar{A}}(t) \pm m_{\bar{A}}(t)\right)/\nu_p > 0$ (here, the equalities hold if and only if all agents have opinions $w=1$ or $w=-1$, which is the uninteresting case leading to the formation of Dirac delta distributions instead of the Beta distributions \eqref{eqbeta}), three possible situations must be distinguished: 
    \begin{itemize} 
        \item $\varepsilon \frac{\rho_{\bar{A}}(t) - m_{\bar{A}}(t)}{\nu_p} < (\omega_p + \varepsilon) \frac{\rho_{\bar{A}}(t) - m_{\bar{A}}(t)}{\nu_p} < 1$ and/or $\varepsilon \frac{\rho_{\bar{A}}(t) + m_{\bar{A}}(t)}{\nu_p} < (\omega_p + \varepsilon) \frac{\rho_{\bar{A}}(t) + m_{\bar{A}}(t)}{\nu_p} < 1$, which corresponds to opinion polarization among both active and inactive agents; 
        \item $1 < \varepsilon \frac{\rho_{\bar{A}}(t) \pm m_{\bar{A}}(t)}{\nu_p} < (\omega_p + \varepsilon) \frac{\rho_{\bar{A}}(t) \pm m_{\bar{A}}(t)}{\nu_p}$, which corresponds to consensus formation among both active and inactive agents; 
        \item $\varepsilon \frac{\rho_{\bar{A}}(t) - m_{\bar{A}}(t)}{\nu_p} < 1$ and/or $\varepsilon \frac{\rho_{\bar{A}}(t) + m_{\bar{A}}(t)}{\nu_p} < 1$, and  $1 < (\omega_p + \varepsilon) \frac{\rho_{\bar{A}}(t) \pm m_{\bar{A}}(t)}{\nu_p}$, which corresponds to opinion polarization among inactive agents and consensus formation among active agents.  
    \end{itemize}
    The third scenario is the most interesting one and highlights the importance of social interactions. Indeed, in this case, an effective compromise process among active agents prevents the formation of extreme opinion, while the absence of such process among inactive individuals leads to polarization. Moreover, there exists a unique threshold $A_*=A_*(t) \in (-  \gamma,  \gamma)$, given by   
    \begin{equation*}
        A_*(t) = \frac{1}{\omega_p}\min{\left(\frac{\rho_{\bar{A}}(t) - m_{\bar{A}}(t)}{\nu_p}, \frac{\rho_{\bar{A}}(t) + m_{\bar{A}}(t)}{\nu_p}\right)}^{-1} - \frac{\eps}{\omega_p}, 
    \end{equation*}
    which  distinguishes between opinion polarization and consensus formation (this situation is illustrated in Figure \ref{f2}). Notice that it is not possible to have consensus formation among inactive agents and, at the same time, opinion polarization among active ones, since the compromise dynamics are always stronger among active individuals. Finally, observe that as the value of $\nu_p$ decreases, i.e. the compromise dynamics become more relevant, the value $\left(\rho_{\bar{A}}(t) \pm m_{\bar{A}}(t)\right)/\nu_p$ increases, which highlights again the importance of compromise processes to prevent opinion polarizations. In particular, $\rho_{\bar{A}}(t) \le \omega_p + \varepsilon < 1$ and we thus need small values of $\nu_p$ to observe consensus formation (notice that the choice $\nu_p<1$ could still lead to polarized opinions, see Figure \ref{f2}). 

    \begin{figure}
    \centering
    \begin{subfigure}{0.475\textwidth}
        \centering
        \includegraphics[width=\textwidth]{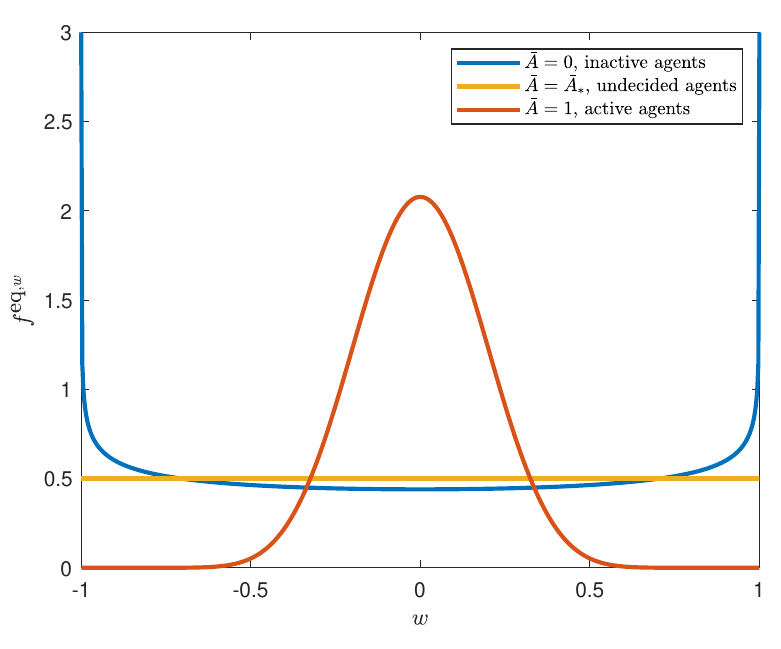}
        \caption{}
    \end{subfigure}
    \hspace{5mm}
    \begin{subfigure}{0.475\textwidth}
        \centering
        \includegraphics[width=\textwidth]{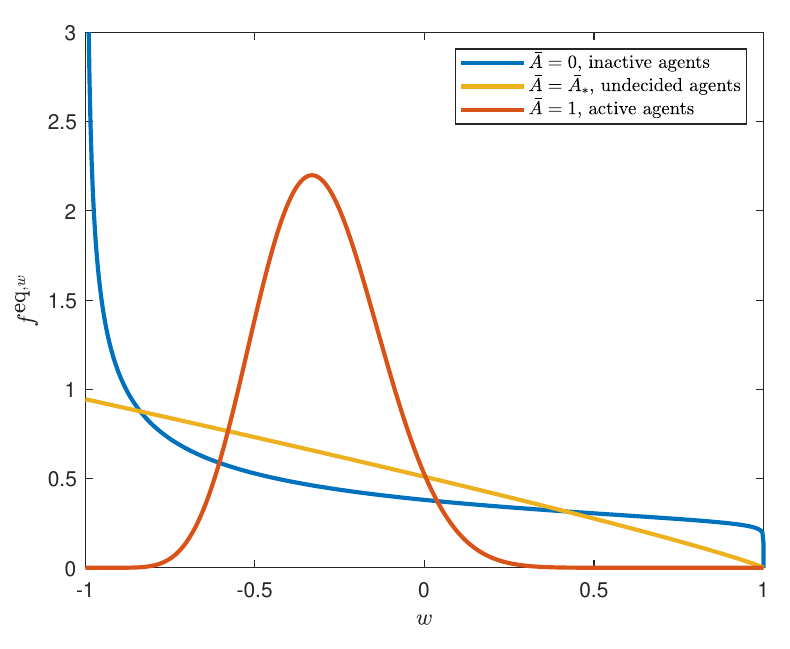}
        \caption{}
    \end{subfigure}
    \caption{Partial equilibrium distributions for a fixed time and different values of the activity level. Inactive agents are characterized by opinion polarization, while active ones by consensus formation. In (A) $m_{\bar{A}}=0$ while in (B) $m_{\bar{A}}=-0.2$, the other values are $\rho_{\bar{A}}=0.65$, $\omega_p=0.8$, $\varepsilon=0.05$, and $\nu_p=0.04$. The function $C$ was so chosen so that the area subtended by the graphs was equal to 1, but other choices would have been acceptable (and they would have led analogous results).} \label{f2}
    \end{figure} 

    Consider now the sole interactions with opinion leaders, namely only the collisional operator (\ref{eq31}). Analogous calculations show that the partial equilibrium in this case reads 
    \begin{equation} \label{eqbeta2}
        f^{\eq, w}(t,A,w) = C(t,A) (1-w)^{-1+\frac{\bar{A}\omega_\ell+\varepsilon}{\nu_\ell} (1-\mu_\ell)} (1+w)^{-1+\frac{\bar{A}\omega_\ell+\varepsilon}{\nu_\ell} (1+\mu_\ell)},
    \end{equation} 
    and the previous remarks hold also for these equilibria, since they have the same structure of (\ref{eqbeta}). Notice that \eqref{eqbeta2} is well-defined as long as $\mu_\ell \not= \pm 1$, since in such cases we would obtain a Dirac delta (in Subsection \ref{evol2} we will provide more details on this matter).

    At last, if we take into account both agent--agent and agent--leaders interactions, we end up with a similar Beta distribution combining the dependencies on $\rho_{\bar{A}}$, $m_{\bar{A}}$, and $\mu_\ell$, namely 
    \begin{equation}  \label{eqbeta3}
    \begin{split}
        f^{\eq, w}(t,A,w) &= C(t,A) (1-w)^{-1+\frac{\lambda_p(\bar{A}\omega_p+\varepsilon)(\rho_{\bar{A}}(t)-m_{\bar{A}}(t))+\lambda_\ell(\bar{A}\omega_\ell+\varepsilon)(1-\mu_\ell)}{\sigma_p^2 + \sigma_\ell^2}} \\
        & \hspace*{2cm} \times (1+w)^{-1+\frac{\lambda_p(\bar{A}\omega_p+\varepsilon)(\rho_{\bar{A}}(t)+m_{\bar{A}}(t))+\lambda_\ell(\bar{A}\omega_\ell+\varepsilon)(1+\mu_\ell)}{\sigma_p^2 + \sigma_\ell^2}}. 
    \end{split}
    \end{equation}
    All previous considerations still hold in this general case. 
    
\subsection{Comparison with other models} \label{subsec3}

    The model that we have introduced takes inspiration from the one in \cite{1} and can be seen as its kinetic counterpart. The most relevant difference consists in the way microscopic variations (\ref{eq3}) of the opinion work. Indeed, in \cite{1} the probability that an agent interacts with another solely depends on the activity level of the former, but when the interaction takes place both agents are influenced. Therefore, in order to reproduce the original model with a kinetic approach, we should consider interactions of the form
    \begin{align*}
    \begin{cases}
        w' =  w + \tilde{A}_{w,p}   G(w,v)   \lambda_p   (v-w) + \tilde{A}_{v,p}   G(w,v)   \lambda_p   (v-w) + D(w)   \eta_p, \\[2mm] 
        v'   =  v + \tilde{A}_{w,p}   G(v,w)   \lambda_p   (w-v) + \tilde{A}_{v,p}   G(v,w)   \lambda_p   (w-v) + D(v)   \eta_p, 
    \end{cases}
    \end{align*}
    where the discrete random variables $\tilde{A}_{w,p}$ and $\tilde{A}_{v,p}$ are related to the activity levels of the first and second agent, respectively. We believe that our approach to model these microscopic interactions is more realistic, since we consider the attitudes of both individuals. Another important difference resides in the fact that in \cite{1} it is supposed that the agents are distributed on a lattice, and that each of them is able to interact only with their neighbors. In order to reproduce this effect, we would have to assume that the frequency of agent--agent interactions depend on an underlying graph structure, which could be modeled through a graphon $\mathcal{B}: [0,1]^2 \to \R_+$ \cite{g1,g2,3, 5}. More precisely, one would have to further consider the static position $x \in [0,1]$ of each agent on a graphon structure of the form
    \begin{align*}
        \mathcal{B}(x,y)=
        \begin{cases}
            1 \quad & \textnormal{if}\ \  |x-y| \le d, \\[2mm] 
            0 & \textnormal{if} \ \   |x-y| > d,
        \end{cases}
    \end{align*}
    for some $d > 0$, and a model with similar properties to ours could then be derived by inserting $\mathcal{B}(x,y)$ as a coefficient multiplying the first integrand in (\ref{eq25}), see \cite{3}. 
  
    In recent years, several kinetic models for opinion dynamics have been proposed, see for example \cite{2, 3, 15, 6, 8, 16}. The main feature that distinguishes our model from these is clear: opinion variations depend not only on the relative differences between the pre-interaction opinions of agents and leaders, but also on the activity levels of the interacting individuals, measuring how much the latter ones are prone to search for new information and to modify their original opinion. In order to better understand this difference, let us introduce the marginal distribution 
    \begin{equation*} \label{eq46}
        h(t,w) = \int_{\R} f(t,A,w)   \dd A .
    \end{equation*}
    The Fokker--Planck equation (\ref{eq29}) for the distribution $h(t,w)$ reads 
    \begin{equation} \label{eq43}
        \partial_t h(t,w) = \tilde{Q}_p(h,f)(t,w)+\tilde{Q}_\ell(h, f_\ell)(t,w),
    \end{equation}
    where  
    \begin{align*}
        \tilde{Q}_p(h,f)(t,w) &= \frac{\sigma_p^2}{2} \partial^2_w\Big(D^2(w)h(t,w)\Big) +\lambda_p\partial_w\Big(\mathcal{K}[f](t,w)h_{\bar{A},p}(t,w)\Big), \\[2mm]
        \tilde{Q}_\ell(h, f_\ell)(t,w) &= \frac{\sigma_\ell^2}{2}\partial^2_w\Big(D^2(w)h(t,w)\Big) +\lambda_\ell\partial_w\Big(\mathcal{J}[f_\ell](w)h_{\bar{A},\ell}(t,w)\Big),
    \end{align*}
    and we have defined
    \begin{equation*} \label{eq49}
        h_{\bar{A},\bullet}(t,w) = \int_{\R} (\bar{A}\omega_\bullet + \varepsilon) f(t,A,w)   \dd A. 
    \end{equation*}
    While the Fokker--Planck equation (\ref{eq43}) displays a very similar structure to already existing models \cite{2,3,8,16}, we can see the additional influence of the activity level appearing in the distributions $h_{\bar{A},\bullet}(t,w)$ and in the operator $\mathcal{K}[f](t,w)$. The evolution of the marginal distribution $h(t,w)$ will be explored in more details in Section \ref{compactness}.

\section{Increasing the number of active agents via control strategies} \label{control}

    \noindent In this section, we introduce a suitable control strategy to increase the overall activity level of the population. In the political sphere, this control strategy could be interpreted as a way to prevent abstentionism \cite{12, 14, 13}. Our approach resembles the one introduced in \cite{3} to avoid consensus formation on a graphon: we include an additional microscopic dynamic to prevent the formation of a large group of inactive agents. In particular, we aim at: 
    \begin{itemize}
        \item allowing the inactive agents to quickly increase their activity level; \\[-3mm]
        \item avoiding the presence of agents with an extremely high or low level of activity $A$, a scenario that would be unrealistic and difficult to interpret. 
    \end{itemize}
    To this end, we modify the activity levels of the individuals based on a convex combination of updates weighted by a parameter $\theta \in (0,1)$, so that a fraction $1-\theta$ of the population updates its level of activity based on the second relations of (\ref{eq3}) and (\ref{eq4}), while the other fraction of size $\theta$ does it following the controlled interaction 
    \begin{equation} \label{eq:controlled interaction}
        A'=A-\lambda_c A, 
    \end{equation}
    where $\lambda_c \in (0,1)$ characterizes the intensity of the control. Notice that this controlled interaction becomes more relevant as $|A|$ grows. The idea is to bring negative values of the activity level closer to zero, so that inactive agents can first become undecided and later on move to the class of active agents. At the same time, we need to ensure that the strategy does not reduce the number of active agents (realistically, think for example of an agent that suddenly loses interest regarding a particular topic). We will illustrate a way to decrease the number of inactive agents while simultaneously increasing the number of active ones. We also stress that interaction \eqref{eq:controlled interaction} is designed to avoid the formation of agents with extremely high or low levels of activity. The proof of this fact is more technical and will be presented in Section \ref{compactness}, together with a comparison between the controlled and the uncontrolled models. 

    \begin{remark}
        The control strategy that we consider is based on the idea that an agent could suddenly gain or lose interest in a certain topic, resulting in an increase or in a reduction of their social interactions. Hence, it can be seen as a microscopic update of the activity level which is alternative to the original one given by \eqref{eq3}. Such strategy could be reformulated in a more canonical way (see, e.g., \cite{3}) by interpreting the controlled interaction \eqref{eq:controlled interaction} as a microscopic update of the form
        \begin{equation*}
            A' = A + \alpha A_c,
        \end{equation*}
        where $\alpha > 0$, and the optimal value $A^{\textrm{opt}}$ of the control $A_c$ is obtained by solving the minimization problem
        \begin{equation*}
            A_{\mathrm{opt}} = \underset{A_c \in \mathcal{A}_c}{\arg \min} \left( (A' - A_{\mathrm{target}})^2 + \beta A_c^2 \right),
        \end{equation*}
        on the set $\mathcal{A}_c$ of admissible controls. Here, $A_{\mathrm{target}} \in \R$ denotes a prescribed target value and $\beta > 0$ is a suitable regularization parameter encoding the magnitude of the cost needed to apply the control strategy. By the method of Lagrange multipliers, the above constrained problem transforms into finding the unique solution $A_c \in \R$ of
        \begin{equation*}
            (A' - A_{\mathrm{target}}) \frac{\partial A'}{\partial A_c} + \beta A_c = 0,
        \end{equation*}
        which is given by
        \begin{equation*}
            A_{\mathrm{opt}} = - \frac{\alpha}{\alpha^2 + \beta} (A - A_{\mathrm{target}}).
        \end{equation*}
        In particular, we end up with the optimal microscopic update
        \begin{equation*}
            A' = A - \frac{\alpha^2}{\alpha^2 + \beta}(A - A_{\mathrm{target}}).
        \end{equation*}
        Now, we have mentioned that the focus of our control strategy is to promote the agents' activity, by bringing negative values of the activity level closer to zero. Therefore, it is natural to choose as target value $A_{\mathrm{target}} = 0$. Then, the interaction \eqref{eq:controlled interaction} is recovered by assuming that the intensity of the control and the magnitude of the cost needed to apply it are equivalent, namely $\alpha = \beta$, and by taking $\alpha = \frac{\lambda_c}{1-\lambda_c}$, which is indeed well defined as long as $\lambda_c \in (0,1)$.
    \end{remark}

    Since the interactions agent--agent and agent--leaders modify the activity level according to the same kind of process, for the sake of clarity we shall consider in this section only agent--agent exchanges inside the population. The presence of opinion leaders would lead to analogous results, but notice that some calculations would slightly change due to the fact that the evolution of the activity level would depend on $\omega_p + \omega_\ell$ and $a_p+a_\ell$ (see Eq. (\ref{eq1000})). In particular, it would be enough to replace $a_p$ with $a_p + a_\ell$, $\omega_p$ with $\omega_p + \omega_\ell$, and $\varepsilon$ with $2\varepsilon$. 

\subsection{Mean-field description of the controlled model} \label{sec:mean}

    Proceeding like in Subsection \ref{subsec1}, we introduce the rescaling
    \begin{equation*} \label{eq52}
        \lambda_c \mapsto \epsilon\lambda_c,
    \end{equation*}
    and we take the limit $\epsilon \to 0$ (see Subsection \ref{subsec1}) to recover the Fokker--Planck-type equation 
    \begin{equation} \label{eq53}
        \partial_t f(t,A,w)=(1-\theta) Q_p(f,f)(t,A,w)+\theta Q_c(f,f)(t,A,w),
    \end{equation}
    with
    \begin{equation} \label{eq54}
    \begin{split}
        Q_c(f,f)(t,A,w) &= \frac{\sigma_p^2}{2}  \partial^2_w\Big(D^2(w)f(t,A,w)\Big) +\lambda_p(\bar{A}\omega_p+\varepsilon)\partial_w\Big(\mathcal{K}[f](t,w)f(t,A,w)\Big)\\[2mm] 
        & \qquad + \lambda_c \partial_A\Big(Af(t,A,w)\Big). 
    \end{split}
    \end{equation}
    Notice that the only difference between the original collisional operator $Q_p(f,f)$, defined by (\ref{eq30}), and the new collisional operator $Q_c(f,f)$ derived from the control strategy and defined by (\ref{eq54}), lies in the drift term acting on $A$ (recall that the control strategy solely influences the microscopic update of the activity level). The Fokker--Planck equation (\ref{eq53}) is then completed with the no-flux boundary conditions (\ref{eq35}), where we take $\kappa \geq 2$. Similarly to what done before, in Corollary \ref{equibnd} we will prove that if the initial distribution has a compact support, then the support of $f(t,A,w)$ remains equi-bounded for any $t > 0$, and thus the choice of the boundary conditions is not restrictive. This also justifies the fact that in the derivation of the collisional operator $Q_c(f,f)$ given by \eqref{eq54}, it is possible to neglect terms involving the factor $\epsilon A$, even though $A$ itself is not bounded. For these reasons, from now on we shall always assume that the initial distribution has compact support. 

    \begin{remark}
        The partial equilibrium of the controlled model shares the same structure of the one determined for the original model (\ref{eqbeta}). The only difference is that the quantities $\rho_{\bar{A}}$ and $m_{\bar{A}}$ evolve in a different way, hence the evolution of the normalization function $C(t,A)$ changes accordingly. 
    \end{remark}

\subsection{Effects of the control strategy on the agents' activity level} 

    We start by limiting ourselves to the analysis of case (ii) of Section \ref{sec1}, i.e., $a_p \in (\varepsilon, \omega_p+\varepsilon)$, since it is the most realistic and interesting one. Similarly to the uncontrolled model, the total mass and the average opinion of the population are conserved. Concerning the mass fractions of active and inactive agents, we instead now get 
    \begin{equation*}
    \begin{split}
        \frac{\dd}{\dd t} \rho_i (t) &= -\overbrace{\left((1-\theta)\lambda_A(\varepsilon-a_p)+\theta\gamma \lambda_c \right)}^{\mathcal{C}_i} \int_I f(t,- \gamma, w)   \dd w, \\[2mm]
        \frac{\dd}{\dd t} \rho_a (t) &= \overbrace{\left((1-\theta)\lambda_A(\omega_p+\varepsilon-a_p)-\theta\gamma \lambda_c \right)}^{\mathcal{C}_a} \int_I f(t,\gamma, w)   \dd w. 
    \end{split}
    \end{equation*}
    Therefore, in order to increase the number of active individuals while reducing the number of inactive ones, we need to impose that the coefficients $\mathcal{C}_i$ and $\mathcal{C}_a$ are positive, thus the parameter $\lambda_c$ has to satisfy 
    \begin{equation} \label{eq58}
        \frac{1-\theta}{\gamma \theta} \lambda_A (a_p-\varepsilon) < \lambda_c < \frac{1-\theta}{\gamma \theta} \lambda_A (\omega_p + \varepsilon - a_p),  
    \end{equation}
    and the set of the solutions is not empty if an only if 
    \begin{equation} \label{eq59}
        a_p \in \left(\varepsilon, \frac{\omega_p}{2}+\varepsilon\right) \subset (\varepsilon, \omega_p+\varepsilon). 
    \end{equation}
    If we fix $\lambda_c=\lambda_A$ to deal with a lower number of free parameters, $\theta$ must then satisfy 
    \begin{equation} \label{eq60}
        0 < \frac{a_p-\varepsilon}{\gamma+a_p-\varepsilon} < \theta < \frac{\omega_p + \varepsilon-a_p}{\gamma+\omega_p + \varepsilon-a_p} < 1, 
    \end{equation}
    and the set of the solutions is once again not empty if and only if \eqref{eq59} holds.
    Eq. (\ref{eq60}) tells us that a reduction in the number of inactive agents can be achieved when a sufficiently high number of individuals follows the control strategy, and that, at the same time, if too many agents follow this strategy then also the number of active individuals would decrease. 
    Moreover, we also deduce that the mass fraction $\rho_u$ of undecided agents does not show a monotone behavior since inactive agents close to $A=- \gamma$ tend to become undecided ($\mathcal{C}_i > 0$), while undecided agents close to $A= \gamma$ tend to become active ($\mathcal{C}_a > 0$). 
    
\subsection{Special cases} 

    We conclude the discussion of the control strategy by analyzing several special cases that we encountered. In particular, we would like to understand what happens if (\ref{eq59}) is not satisfied, and if $a_p$ is given by (\ref{eq10}). Additionally, we are also interested in studying cases (i) and (iii), when $a_p \in (0, \varepsilon)$ and $a_p \in (\omega_p + \varepsilon, +\infty)$, respectively. 

    Let us start by supposing that $a_p \in \left(\frac{\omega_p}{2}+\varepsilon, \omega_p + \varepsilon\right)$. This means that the activity level fades away faster than before and thus our control strategy is not effective, since only one inequality in (\ref{eq58}) can be satisfied. If we try to reduce the number of inactive agents ($\mathcal{C}_i > 0$), then the number of active ones decreases ($\mathcal{C}_a < 0$). Similarly, if $\lambda_c=\lambda_A$, only one inequality of (\ref{eq60}) can be satisfied and we end up with the same situation as before. 

    Suppose now that $a_p=\frac{\omega_p}{2}+\varepsilon$. By choosing 
    \begin{equation*}
        \lambda_c= \frac{1-\theta}{\theta}  \frac{\lambda_A\omega_p}{2\gamma}, 
    \end{equation*}
    or, if $\lambda_c = \lambda_A$, 
    \begin{equation*}
        \theta=\frac{\omega_p}{2\gamma+\omega_p},
    \end{equation*}
    we get $\mathcal{C}_i = \mathcal{C}_a = 0$, i.e. the fluxes cancel out. Therefore, the mass fractions of inactive and active agents remain constant. Clearly, in this case it is also possible to fix $\lambda_c$ (or $\theta$) such that $\mathcal{C}_i$ and $\mathcal{C}_a$ have opposite signs. With this choice, the considerations we have made for $a_p \in \left(\frac{\omega_p}{2}+\varepsilon, \omega_p + \varepsilon\right)$ are valid. Finally, Proposition \ref{prop3} holds even if we adopt the control strategy (even without making any hypotheses on $\lambda_c$ or $\theta$). 
    \begin{proposition} \label{prop4}
        Assume that $m_A(t)$ is analytical. Suppose that for any fixed $w \in I$ the initial distribution $f^\init(A,w) = f(0,A,w)$ is even with respect to $A$ and that the control strategy is implemented. If $a_p=\frac{\omega_p}{2}+\varepsilon$, then $m_A \equiv 0$.  
    \end{proposition}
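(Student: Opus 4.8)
The plan is to follow the scheme of the proof of Proposition~\ref{prop3}, the only genuinely new ingredient being the contribution of the control operator $Q_c$ to the evolution of $m_A$. First I would test the controlled equation \eqref{eq53} against $\varphi(A,w)=A$. The $w$-derivative terms of $Q_p$ and of $Q_c$ carry exactly the same $w$-flux, which vanishes by the no-flux conditions \eqref{eq35}, so these terms drop out; integrating the $A$-drift terms by parts — legitimate since $\kappa\ge 2$ forces $A^2 f\to 0$ as $A\to\pm\infty$ — and using $a_p$ as in \eqref{eq10}, one gets
\begin{equation*}
    \frac{\dd}{\dd t}\,m_A(t)=(1-\theta)\,\lambda_A\omega_p\int_{\R\times I}\Big(\bar A-\tfrac12\Big)f(t,A,w)\,\dd A\,\dd w\;-\;\theta\,\frac{\lambda_c}{2}\,m_A(t).
\end{equation*}

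The cleanest way to conclude is then to pass to the activity marginal $\Phi(t,A)=\int_I f(t,A,w)\,\dd w$. Integrating \eqref{eq53} over $w\in I$, the no-flux conditions kill every $w$-flux contribution (in particular the nonlocal operator $\mathcal{K}[f]$ disappears entirely), so $\Phi$ solves the \emph{closed} one-dimensional transport equation
\begin{equation*}
    \partial_t\Phi(t,A)=-\,\partial_A\big(v(A)\,\Phi(t,A)\big),\qquad v(A)=(1-\theta)\,\lambda_A\big(\bar A\,\omega_p+\varepsilon-a_p\big)-\theta\,\frac{\lambda_c}{2}\,A.
\end{equation*}
Because $\bar A(-A)=1-\bar A(A)$, with $a_p=\frac{\omega_p}{2}+\varepsilon$ the velocity $v$ is \emph{odd}, $v(-A)=-v(A)$. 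Hence $A\mapsto\Phi(t,-A)$ solves the very same transport equation, and by hypothesis it starts from the same initial datum $\Phi^\init(A)=\int_I f^\init(A,w)\,\dd w$, which is even in $A$. Since $v$ is globally Lipschitz this linear transport equation has a unique solution (method of characteristics), so $\Phi(t,-A)=\Phi(t,A)$ for all $t\ge 0$: the marginal stays even in $A$. Therefore $m_A(t)=\int_{\R}A\,\Phi(t,A)\,\dd A=0$ for every $t\ge 0$, which yields the statement; note that this route does not even use the analyticity assumption.

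If instead one wants to stay literally within the framework of Proposition~\ref{prop3} and use analyticity, I would show $\left.\frac{\dd^n}{\dd t^n}m_A\right|_{t=0}=0$ for all $n\in\mathbb{N}$ by induction: differentiating the first displayed identity in $t$ and repeatedly integrating the $A$-drifts by parts against the test functions $\bar A-\tfrac12$ and $A$ produces, evaluated at $t=0$, only integrals of the form $\int_{I\times(-\gamma,\gamma)}(\text{odd polynomial in }A)\,f^\init\,\dd A\,\dd w$, which vanish by evenness of $f^\init$, together with boundary fluxes at $A=\pm\gamma$ which enter solely through the antisymmetric combination $\int_I\big(f^\init(\gamma,w)-f^\init(-\gamma,w)\big)\,\dd w$, again zero by evenness. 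The delicate point in this second route is precisely the bookkeeping of those boundary terms at $A=\pm\gamma$, which appear because $\bar A$ is not $C^1$ there — but the marginal computation above already shows $\int_I\big(f(t,\gamma,w)-f(t,-\gamma,w)\big)\,\dd w\equiv 0$, so the difficulty is only apparent.
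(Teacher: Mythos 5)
Your proposal is correct, and its primary route is genuinely different from the paper's. The paper proves Proposition \ref{prop4} exactly as your second, fallback route suggests: it writes the identity $\frac{\dd}{\dd t}m_A(t)=(1-\theta)\lambda_A\omega_p\int(\bar A-\tfrac12)f\,\dd A\,\dd w-\theta\tfrac{\lambda_c}{2}m_A(t)$ and then ``adapts'' the scheme of Proposition \ref{prop3}, i.e.\ shows $\frac{\dd^n}{\dd t^n}m_A|_{t=0}=0$ for all $n$ and invokes the assumed analyticity of $m_A$. Your main argument instead observes that the activity marginal solves the closed transport equation \eqref{eq67}, that with $a_p=\tfrac{\omega_p}{2}+\varepsilon$ the velocity field is odd (via $\bar A(-A)=1-\bar A(A)$), and that uniqueness for this Lipschitz transport equation forces the marginal to remain even, whence $m_A\equiv 0$. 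This is a strictly stronger and cleaner result: it dispenses with the ad hoc analyticity hypothesis, and it sidesteps the delicate point you correctly flag in the iterated-derivative route, namely that repeated integration by parts against $(\bar A-\tfrac12)\mathds{1}_{(-\gamma,\gamma)}$ produces distributional boundary contributions at $A=\pm\gamma$ (since $\bar A$ is only Lipschitz there) which the paper's closed-form expression for $\frac{\dd^n}{\dd t^n}m_A|_{t=0}$ glosses over; in your symmetry argument these terms never arise, and in the Taylor route they cancel precisely by the evenness your marginal computation establishes. The only implicit ingredients are the compact support of the initial datum (assumed throughout Section \ref{control} for the controlled model, and needed so that $m_A$ is well defined as the integral of an odd integrable function) and uniqueness for the continuity equation with Lipschitz drift, which is at the same level of rigor as the paper's own use of characteristics in Proposition \ref{teo1} and Corollary \ref{cor2}.
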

    \begin{proof}
        It suffices to consider 
        \begin{equation*}
            \frac{\dd}{\dd t} m_A(t) = (1-\theta) \lambda_A \omega_p \int_{\R\times I} \left(\bar{A}-\frac{1}{2}\right)f(t,A,w) \dd A \dd w -\theta \lambda_c \int_{\R\times I} Af(t,A,w)  \dd A   \dd w,
        \end{equation*}
        and adapt the calculations from the proof of Proposition \ref{prop3}. 
    \end{proof}

    Consider now the case $a_p \in (0, \varepsilon)$. In this situation we do not need the control strategy to reduce the number of inactive agents (see Subsection \ref{subsec2}). Notice that we always have $\mathcal{C}_i > 0$, hence the number of inactive agents decreases. The introduction of the control strategy would conduct to two possible scenarios: if $\mathcal{C}_a < 0$, i.e. the strategy has a strong effect, then the number of active agents decreases; if $\mathcal{C}_a > 0$, i.e. the strategy has a weak effect, then the number of active agents increases. Therefore, in this case introducing the control strategy could only lead to a worse situation. 

    Finally, suppose that $a_p \in (\omega_p + \varepsilon, +\infty)$, which represents the worst case scenario (see Subsection \ref{subsec2}). Since $\mathcal{C}_a < 0$, the number of active agents decreases. Again, only two situations can be observed: if $\mathcal{C}_i > 0$, i.e. the effect of the control strategy is strong, then the number of inactive agents decreases because they become undecided; if $\mathcal{C}_i < 0$, i.e. the effect of the strategy is weak, then the number of inactive agents increases. Therefore, it is clearly better to introduce a strong control strategy in this case, but we cannot hope to obtain results as good as the ones found for $a_p \in (\varepsilon, \omega_p + \varepsilon)$, since the number of active agents always decreases. 

\section{Analytical properties and trends to equilibrium} \label{compactness}

    \noindent This final section is dedicated to prove some analytical properties and convergence to equilibrium results for the proposed models.

    In Subsection \ref{evol1}, we investigate the evolution of the social activity alone (independently of the opinion dynamics) by studying the marginal of $f(t,A,w)$ with respect to $A$. The idea here is that when the distribution of the activity level reaches some sort of equilibrium, studying the evolution of opinions becomes much easier. We are going to prove that, when considering the controlled model, all agents become active in finite time, allowing to consider reduced Fokker--Planck models governing the dynamics of the marginal of $f(t,A,w)$ with respect to $w$. Since the agents update their activity level by interacting with each other and with the leaders following the same structural rules, in Subsection \ref{evol1} it will be enough to consider the sole interactions agent--agent, as interactions agent--leaders could be treated in the same fashion. We will start from the uncontrolled model and then move on to the controlled one. In particular, we will show that the latter prevents the creation of agents with extremely high or low levels of activity (matching our second goal explained at the beginning of Section \ref{control}).
    
     In Subsection \ref{evol2}, we then move on to analyze the evolution of opinions alone, both interactions agent--agent and agent--leaders are taken into account. In particular, when considering the controlled model, all agents have become active and we will prove that the marginal of $f(t,A,w)$ in $w$ converges toward a global equilibrium state (a Beta-type distribution) of the corresponding reduced model, when $t \to +\infty$. Our analysis makes use of relative entropy techniques inspired from \cite{20,FurPulTerTos} and extended in a novel way to deal with Fokker--Planck equations that involve time-dependent coefficients.
     
     Finally, in Subsection \ref{evol3} we will derive some relevant analytical properties of the whole distribution $f(t,A,w)$, notably its nonnegativity, its $L^\infty(\R_+,L^1(\R \times I))$ regularity, and its uniqueness in a specific linearized setting. 

\subsection{Evolution of the social activity of individuals} \label{evol1}

    Let us introduce the marginal distribution 
    \begin{equation*} \label{eq66}
        g(t,A) = \int_I f(t,A,w)   \dd w,
    \end{equation*}
    and consider the Fokker--Planck equation \eqref{eq29} with only the operator $Q_p(f,f)$. Let us also assume that the initial level of activity inside the population is bounded, i.e., the initial distribution $f^\init(A,w) = f(0,A,w)$ has a compact support. For any value of the constant $a_p$ from cases (i), (ii), or (iii), since the activity level varies according to the drift term of (\ref{eq30}) acting on $A$ alone, one expects that the support of the distribution remains compact for any $t > 0$. We make this argument rigorous in the following proposition. 

    \begin{proposition} \label{prop5}
        Let us consider the uncontrolled model \eqref{eq29} with only agent--agent interactions. Given an initial distribution $g^\init(A) = g(0,A)$ such that $\textnormal{supp}( g^\init) \subset [i_1,i_2]$, then $\textnormal{supp}( g( t, \cdot)) \subset [i_1 + \lambda_A (\varepsilon - a_p)  t, i_2 + \lambda_A (\omega_p + \varepsilon - a_p)  t]$ for any $t \in \R_+$.
    \end{proposition}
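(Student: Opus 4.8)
The plan is to track the marginal $g(t,A)$, which satisfies a one-dimensional equation obtained by integrating \eqref{eq29} (with only $Q_p$) in $w$. All three terms of $Q_p(f,f)$ in \eqref{eq30} contribute: the first two involve $\partial_w$ of quantities that vanish on the boundary $w=\pm 1$ by the no-flux conditions \eqref{eq35} (the diffusion term because $D(\pm1)=0$, the compromise term because of the second boundary condition), so they integrate to zero. Only the third term survives, giving the pure transport equation
\begin{equation*}
    \partial_t g(t,A) = -\lambda_A \, \partial_A\Big((\bar A\omega_p+\varepsilon - a_p)\, g(t,A)\Big),
\end{equation*}
a one-dimensional continuity equation with velocity field $b(A) = -\lambda_A(\bar A(A)\omega_p+\varepsilon-a_p)$, where $\bar A$ is the explicit piecewise-linear function \eqref{eq2}.

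First I would note that $b$ is globally Lipschitz: on $A\le-\gamma$ it equals $-\lambda_A(\varepsilon-a_p)$ (constant), on $A\ge\gamma$ it equals $-\lambda_A(\omega_p+\varepsilon-a_p)$ (constant), and on $(-\gamma,\gamma)$ it is affine with slope $-\lambda_A\omega_p/(2\gamma)$, matching continuously at the endpoints. Hence the characteristic ODE $\dot A = b(A)$ has a unique flow $\Phi_t$, and the solution is the pushforward $g(t,\cdot) = (\Phi_t)_\# g^{\init}$, so $\operatorname{supp}(g(t,\cdot)) = \Phi_t(\operatorname{supp}(g^{\init})) \subset \Phi_t([i_1,i_2]) = [\Phi_t(i_1), \Phi_t(i_2)]$ by monotonicity of the one-dimensional flow. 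The claimed inclusion then follows from a \emph{comparison} argument rather than solving the characteristics exactly: since $\bar A \in [0,1]$ everywhere, we have the two-sided bound $-\lambda_A(\omega_p+\varepsilon-a_p) \le b(A) \le -\lambda_A(\varepsilon-a_p)$ for all $A$. Therefore any characteristic $A(t)$ satisfies $A(0)+\lambda_A(\varepsilon-a_p)t \ge A(t)$... — more precisely, $\dot A \le -\lambda_A(\varepsilon-a_p)$ gives $A(t) \le A(0) + \lambda_A(\varepsilon-a_p)t$, wait, that is the wrong direction; let me restate: from $b(A)\ge -\lambda_A(\omega_p+\varepsilon-a_p)$ we get $A(t) \ge A(0) - \lambda_A(\omega_p+\varepsilon-a_p)t = A(0) + \lambda_A(\varepsilon+\omega_p \cdots)$, hmm. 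The clean way: integrating $\dot A = b(A)$ with $-\lambda_A(\omega_p+\varepsilon-a_p)\le b \le -\lambda_A(\varepsilon-a_p)$ yields
\begin{equation*}
    A(0) - \lambda_A(\omega_p+\varepsilon-a_p)\,t \;\le\; A(t) \;\le\; A(0) - \lambda_A(\varepsilon-a_p)\,t \quad \text{is false in general;}
\end{equation*}
the correct monotone bounds are $A(0)+\lambda_A(\varepsilon - a_p)t$ as an \emph{upper} bound is wrong too since $\varepsilon-a_p$ can be negative — one must be careful with signs. The honest statement is simply: $\dot A \le -\lambda_A(\varepsilon - a_p)$ everywhere (taking $\bar A \ge 0$), hence $A(t) \le A(0) - \lambda_A(\varepsilon-a_p)\cdot$ — no. Let me just say that Grönwall-type integration of the differential inequality $-\lambda_A(\omega_p+\varepsilon-a_p) \le \dot A \le -\lambda_A(\varepsilon-a_p)$ gives exactly $i_1 - \lambda_A(\omega_p+\varepsilon-a_p)t \le A(t) \le i_2 - \lambda_A(\varepsilon-a_p)t$ when started in $[i_1,i_2]$, and since $-\lambda_A(\omega_p+\varepsilon-a_p) = \lambda_A(a_p-\omega_p-\varepsilon)$ while the proposition writes $i_1 + \lambda_A(\varepsilon-a_p)t$ — there is a sign/labeling choice to reconcile, which I would do by writing out which constant is the larger one.

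The main obstacle is really just making the above rigorous at the level of the PDE without assuming smoothness of $g$: I would either (a) justify the pushforward formula via the method of characteristics for the transport equation with Lipschitz velocity (DiPerna–Lions / classical ODE theory suffices since $b$ is Lipschitz), or (b) give a direct argument by testing the weak formulation against functions supported outside the claimed interval and showing the integral stays zero, using that characteristics entering that region would have had to originate outside $[i_1,i_2]$. Approach (b) avoids any regularity of $g$ beyond $g(t,\cdot)$ being a measure, and matches the level of rigor of the surrounding propositions. A secondary technical point is confirming that the $w$-integration of the first two terms of $Q_p$ genuinely vanishes: this uses precisely the first three boundary conditions in \eqref{eq35}, so I would cite those explicitly. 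No compactness or entropy machinery is needed here — the whole proposition is transport with a bounded, Lipschitz, autonomous velocity field, and the support propagation is a one-line comparison once the reduced equation is in hand.
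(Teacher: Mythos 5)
Your overall strategy is exactly the paper's: integrate out $w$ (the $w$-flux terms vanish by the no-flux conditions \eqref{eq35}) to get the pure transport equation for $g$, run the method of characteristics, and bound the characteristic speed using $\bar A\in[0,1]$. The extra care you take about the Lipschitz regularity of the velocity and the pushforward representation is more than the paper offers and is welcome. The problem is the sign of the velocity field, which you never actually resolve. The reduced equation is $\partial_t g+\partial_A\bigl(v(A)\,g\bigr)=0$ with $v(A)=\lambda_A(\bar A\omega_p+\varepsilon-a_p)$, so the characteristics satisfy $\dot A=+v(A)$, not $\dot A=-v(A)$ as your choice $b=-\lambda_A(\bar A\omega_p+\varepsilon-a_p)$ would give. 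With the correct sign, $\bar A\in[0,1]$ yields
\begin{equation*}
\lambda_A(\varepsilon-a_p)\;\le\;\dot A(t)\;\le\;\lambda_A(\omega_p+\varepsilon-a_p),
\end{equation*}
which integrates in one line to $A_0+\lambda_A(\varepsilon-a_p)t\le A(t)\le A_0+\lambda_A(\omega_p+\varepsilon-a_p)t$ for $A_0\in[i_1,i_2]$, i.e.\ precisely the claimed inclusion, with no ``sign/labeling choice to reconcile.''

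As written, your comparison argument lands on the interval $[\,i_1-\lambda_A(\omega_p+\varepsilon-a_p)t,\ i_2-\lambda_A(\varepsilon-a_p)t\,]$, which is the reflection of the correct one and does not coincide with the statement (e.g.\ in case (i), $a_p<\varepsilon$, the true support drifts to the right while your interval's left endpoint drifts left). The repeated self-corrections (``wait, that is the wrong direction'', ``is false in general'') all stem from this single initial sign flip, so the fix is purely local; but a proof that ends by deferring the reconciliation of the endpoints is not complete. One further small imprecision: the diffusion term integrates to $\bigl[\tfrac{\sigma_p^2}{2}\partial_w(D^2f)\bigr]_{w=-1}^{w=1}$, which does not vanish merely because $D(\pm1)=0$; it is the \emph{combined} flux in the second condition of \eqref{eq35} that kills the sum of the two $w$-terms.
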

    \begin{proof}
        Integrating over $w \in I$ equation \eqref{eq29} with only the operator $Q_p(f,f)$, the evolution of the distribution $g(t,A)$ is given by 
        \begin{equation} \label{eq65}
        \begin{split}
            \partial_t g(t,A) &= -\lambda_A \partial_A\Big((\bar{A}\omega_p + \varepsilon - a_p)  g(t,A)\Big) \\[2mm] 
            &= -\lambda_A (\bar{A}\omega_p + \varepsilon - a_p) \partial_A g(t,A) - \lambda_A \omega_p g(t,A) \partial_A \bar{A},
        \end{split}
        \end{equation}
        and can be read along the characteristic curves. By the method of characteristics, we look for the solutions $( t(s),A(s), g(s,A(s)))$ of
        \begin{equation} \label{eq:zero_char}
            \begin{array}{ll}
                \displaystyle \frac{\dd}{\dd s} t(s) = 1, &  t(0) = 0, \\[6mm]
                \displaystyle \frac{\dd}{\dd s} A(s)= \lambda_A (\bar{A}(s)\omega_p + \varepsilon - a_p), & A(0) = A_0 \in [i_1, i_2], \\[6mm]
                \displaystyle \frac{\dd }{\dd s} g(s,A(s)) = - \lambda_A \omega_p g(s,A(s)) \partial_A \bar{A}(s), &  g(0,A(0)) =  g^\init(A_0). 
            \end{array}
        \end{equation}
        We find $t=s$, then
        \begin{equation*}
            \frac{\dd}{\dd s} A(s) \in [\lambda_A (\varepsilon - a_p), \lambda_A (\omega_p + \varepsilon - a_p)],  
        \end{equation*}
        and from the third equation of \eqref{eq:zero_char} we deduce 
        \begin{equation} \label{exponential}
             g(s,A(s))= g(0,A_0) \exp \left(- \lambda_A \omega_p \int_0^s \partial_A \bar{A}(u)   \dd u\right). 
        \end{equation}
        The thesis follows from the fact that the minimum and maximum velocities describing the evolution of the activity level are therefore given by $\lambda_A (\varepsilon - a_p)$ and $\lambda_A (\omega_p + \varepsilon - a_p)$, respectively. 
    \end{proof} 

    In particular, if $a_p$ belongs to case (i) then $\displaystyle \lim_{t \to +\infty} \rho_A(t) = 1$, while if $a_p$ belongs to case (iii) then $\displaystyle \lim_{t \to +\infty} \rho_i(t) = 1$. On the other hand, case (ii) is more complex and interesting. Let us define $A_p^* \in (- \gamma,  \gamma)$ to be the unique solution to $\bar{A}\omega_p + \varepsilon - a_p = 0$. Agents with an initial activity level equal to $A_p^*$ do not change it since, assuming that $A(t)$ is analytical, recursive calculations easily show that $\left. \frac{\dd^n }{\dd  t^n} A(t) \right|_{t=0} = 0$ for all $n \geq 1$ (see the proof of Lemma \ref{lemma1}, which presents similar calculations). Consider, for example, an undecided agent with initial activity level $A_0$ greater than $A_p^*$. Then 
    \begin{align*}
    \frac{\dd }{\dd t} A(t)=
    \begin{cases}
            \lambda_A \left(\frac{\omega_p}{2\gamma} A(t) + \frac{\omega_p}{2} + \varepsilon - a_p \right) > 0  \quad & \textnormal{if }   A(t) < \gamma, \\[4mm] 
            \lambda_A (\omega_p + \varepsilon - a_p)>0  \quad & \textnormal{if }   A(t) \geq \gamma,
    \end{cases} 
    \end{align*}
    and their activity level initially increases exponentially before becoming an active agent in a finite time, from which point their activity level starts to increase linearly. The same reasoning would hold if $A_0 < A_p^*$, but in this case the level of activity would decrease and the agent would become inactive. This behavior was anticipated by (\ref{eq37}) and (\ref{eq38}). Notice that, unless all agents have an activity level equal to $A_p^*$ initially (i.e., $ g^\init(A)$ is given by the Dirac delta $\delta(A-A_p^*)$, which implies that $g(t,A) = \delta(A-A_p^*)$ for any $t \in \R_+$), the supports $\textnormal{supp}(g(t,\cdot))$, $t \in \R_+$, are not equi-bounded anymore. Therefore, there will be agents with extremely high or low levels of activity and the equilibrium $\delta(A-A_p^*)$ is unstable. 

    \begin{remark}
        The transport equation (\ref{eq65}) preserves the positivity of the initial datum, since the exponential function \eqref{exponential} is always positive. Moreover, because the total mass is conserved, the $L^1(\R)$ regularity of the initial datum is also preserved. 
    \end{remark}

    The controlled interaction (\ref{eq:controlled interaction}) is expected to prevent this unwanted situation characterized by extreme levels of activity since, as $|A|$ grows, the influence of the control increases. Suppose that $\lambda_c$ and $a_p$ satisfy (\ref{eq58}) and (\ref{eq59}) respectively, i.e., that the control strategy is useful and effective (similar results can be obtained in the other, less interesting, cases). Then, the evolution of the distribution $g(t,A)$ is prescribed by 
    \begin{equation} \label{eq67}
    \begin{split}
        \partial_t  g(t,A) & = -(1-\theta)\lambda_A \partial_A \Big((\bar{A}\omega_p + \varepsilon - a_p)  g(t,A)\Big) + \theta \lambda_c \partial_A \Big(A g(t,A)\Big) \\[2mm] 
        & = \partial_A \Big( \left( \theta \lambda_c A - (1-\theta) \lambda_A (\bar{A}\omega_p + \varepsilon - a_p) \right)  g(t,A) \Big).  
    \end{split}
    \end{equation}
    A simple calculation shows that there exists a unique solution to $\theta \lambda_c A - (1-\theta) \lambda_A (\bar{A}\omega_p + \varepsilon - a_p) = 0$, which is given by 
    \begin{equation} \label{eq68}
        A_c^* = \frac{1-\theta}{\theta} \frac{\lambda_A}{\lambda_c} (\omega_p + \varepsilon - a_p), 
    \end{equation}
    and satisfies $A_c^* >  \gamma$, due to (\ref{eq58}). Therefore, equation (\ref{eq67}) admits the equilibrium $g^{\infty}(A)=\delta(A-A_c^*)$, which intuitively seems to be stable thanks to the control strategy. Recall that also (\ref{eq65}) admitted a Dirac delta as equilibrium (case (ii)), but the latter was unstable. In the situation considered here, the following result holds. 
    
    \begin{proposition} \label{teo1}
        Consider the controlled model and suppose that (\ref{eq58}) and (\ref{eq59}) hold. Then, the characteristic curves $A(t)$ of (\ref{eq67}) converge exponentially toward $A_c^*$ given by (\ref{eq68}). 
    \end{proposition}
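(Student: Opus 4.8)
The plan is to analyze the characteristic ODE associated with the transport equation \eqref{eq67} directly. Along characteristics, $\frac{\dd}{\dd t} A(t) = -\left(\theta \frac{\lambda_c}{2} A(t) - (1-\theta)\lambda_A(\bar A(t)\omega_p + \varepsilon - a_p)\right)$, where $\bar A$ depends on $A$ piecewise via \eqref{eq2}. First I would recall from Proposition \ref{prop5}-type reasoning (and the discussion following it) that $A_c^* > \gamma$ under \eqref{eq58}, so that on the region $A \ge \gamma$ one has $\bar A \equiv 1$ and the ODE becomes the linear equation $\frac{\dd}{\dd t} A = -\theta\frac{\lambda_c}{2}(A - A_c^*)$, whose solutions converge to $A_c^*$ exponentially with rate $\theta\lambda_c/2$. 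The substance of the proof is therefore to show that \emph{every} characteristic curve eventually enters the region $A \ge \gamma$ in finite time (or is already there), after which the explicit linear solution finishes the argument.

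The key steps, in order: (1) Write the vector field $V(A) = (1-\theta)\lambda_A(\bar A\omega_p + \varepsilon - a_p) - \theta\frac{\lambda_c}{2}A$ and observe it is continuous, piecewise affine, and strictly decreasing in $A$ (on each of the three pieces the slope is $\le -\theta\lambda_c/2 < 0$), so it has the single zero $A_c^*$ identified in \eqref{eq68}; hence $V(A) > 0$ for $A < A_c^*$ and $V(A) < 0$ for $A > A_c^*$. (2) Conclude that $A_c^*$ is the unique equilibrium and that it attracts: for $A_0 < A_c^*$ the solution increases monotonically toward $A_c^*$, for $A_0 > A_c^*$ it decreases monotonically toward it. (3) Quantify the rate: on the piece $A \ge \gamma$, solve explicitly to get $A(t) - A_c^* = (A(t_0) - A_c^*)e^{-\theta\frac{\lambda_c}{2}(t - t_0)}$; on the lower pieces, bound $|V(A) - V(A_c^*)| = |V(A)| \ge \theta\frac{\lambda_c}{2}|A - A_c^*|$ using the slope bound, which via a Grönwall / comparison argument gives $|A(t) - A_c^*| \le |A_0 - A_c^*|e^{-\theta\frac{\lambda_c}{2} t}$ for all $t \ge 0$ regardless of the initial position. (4) Optionally note the finite-time entry into $[\gamma, \infty)$ explicitly, since on $(-\gamma,\gamma)$ the drift toward $A_c^*$ is bounded below by a positive constant whenever the curve stays in that compact interval.

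The main obstacle is purely a matter of careful bookkeeping across the three affine regimes of $\bar A$: one must verify that the slope of $V$ on the middle piece $(-\gamma,\gamma)$, namely $(1-\theta)\lambda_A\frac{\omega_p}{2\gamma} - \theta\frac{\lambda_c}{2}$, does not cause the one-sided estimate $|V(A)| \ge \theta\frac{\lambda_c}{2}|A - A_c^*|$ to fail. In fact $V$ is a concave-then-convex (piecewise linear) function that is steepest in the middle, so its decrease rate everywhere is at least $\theta\lambda_c/2$, which is exactly the uniform contraction rate claimed; the key inequality $|V(A) - V(B)| \ge \theta\frac{\lambda_c}{2}|A - B|$ then holds globally because each affine piece has slope $\le -\theta\lambda_c/2$ and the pieces join continuously. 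With that Lipschitz-from-below property in hand, the differential inequality $\frac{\dd}{\dd t}|A(t) - A_c^*|^2 = 2(A(t) - A_c^*)V(A(t)) \le -\theta\lambda_c |A(t) - A_c^*|^2$ immediately yields the exponential convergence, and the explicit solution on $[\gamma,\infty)$ confirms the rate is sharp. I would also remark that once a characteristic reaches $A_c^* > \gamma$ the corresponding agent has become (and stays) active, consistent with the stability claim contrasting with the unstable equilibrium $\delta(A - A_p^*)$ of the uncontrolled model.
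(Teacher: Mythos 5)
Your overall architecture --- reduce to the characteristic ODE $\frac{\dd}{\dd t}A = V(A)$ with $V(A) = (1-\theta)\lambda_A(\bar A\omega_p+\varepsilon-a_p) - \theta\frac{\lambda_c}{2}A$, show that every curve enters $[\gamma,+\infty)$ in finite time, then solve the linear equation there --- is exactly the paper's (the paper simply runs the three explicit cases $A_0\ge\gamma$, $A_0\in(-\gamma,\gamma)$, $A_0\le-\gamma$). But your key quantitative claim rests on a sign error. On the middle piece the slope of $V$ is $(1-\theta)\lambda_A\frac{\omega_p}{2\gamma}-\theta\frac{\lambda_c}{2}$, and since $(1-\theta)\lambda_A\frac{\omega_p}{2\gamma}>0$ this is strictly \emph{greater} than $-\theta\frac{\lambda_c}{2}$: the middle region is where $V$ decreases \emph{most slowly} (under \eqref{eq58}--\eqref{eq59} alone this slope need not even be negative). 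Hence the assertion that ``each affine piece has slope $\le -\theta\lambda_c/2$'', and with it the global one-sided Lipschitz bound $|V(A)-V(B)|\ge\theta\frac{\lambda_c}{2}|A-B|$ and the uniform contraction $|A(t)-A_c^*|\le|A_0-A_c^*|e^{-\theta\lambda_c t/2}$, fail for trajectories passing through $(-\gamma,\gamma)$; your ``steepest in the middle'' picture is exactly backwards.

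A second, related gap: your sign analysis of $V$ (unique zero at $A_c^*$, $V>0$ below it) is deduced from this false global monotonicity, so the hypotheses \eqref{eq58}--\eqref{eq59} are never actually used beyond placing $A_c^*$ above $\gamma$. They are needed for more than that: the lower bound in \eqref{eq58} is precisely what guarantees $V(-\gamma)=\theta\gamma\frac{\lambda_c}{2}-(1-\theta)\lambda_A(a_p-\varepsilon)>0$; without it, $V$ acquires a second, attracting zero below $-\gamma$ and characteristics starting near it never reach $A_c^*$. The repair is elementary and is essentially your step (4): check $V(-\gamma)>0$ and $V(\gamma)>0$ directly from the two inequalities in \eqref{eq58}, deduce by piecewise affineness that $V\ge m>0$ on all of $(-\infty,\gamma]$, hence finite-time entry into $[\gamma,+\infty)$, where the explicit solution $A(t)-A_c^*=\left(A(t_0)-A_c^*\right)e^{-\theta\frac{\lambda_c}{2}(t-t_0)}$ yields the exponential convergence. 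The rate $\theta\lambda_c/2$ is then attained only after the entry time; it is not the uniform-in-$A_0$ contraction rate your Gr\"onwall step asserts.
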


    \begin{proof} 
        The evolution of the distribution $g$ is given by 
        \begin{equation*} 
        \begin{split}
            \partial_t  g(t,A) = \left(\theta \lambda_cA - (1-\theta) \lambda_A (\bar{A}\omega_p + \varepsilon - a_p)\right) \partial_A g(t,A) + \left(\theta \lambda_c- (1-\theta)\lambda_A \omega_p \partial_A \bar{A} \right) g(t,A),
        \end{split}
        \end{equation*}
        and can be read along the characteristic curves, which are the solutions to 
        \begin{equation} \label{eq:characteristic}
            \begin{array}{ll}
                \displaystyle \frac{\dd}{\dd s} t(s) = 1, &  t(0) = 0, \\[6mm]
                \displaystyle \frac{\dd}{\dd s} A(s)= -\theta \lambda_cA(s) + (1-\theta) \lambda_A (\bar{A}(s)\omega_p + \varepsilon - a_p), & A(0) = A_0 \in \R, \\[6mm]
                \displaystyle \frac{\dd }{\dd s} g(s,A(s)) = \partial_A \left(\theta \lambda_cA(s) - (1-\theta) \lambda_A (\bar{A}(s)\omega_p + \varepsilon - a_p) \right) & \\
                \qquad\qquad\qquad\qquad \times g(s,A(s)), &  g(0,A(0)) =  g^\init(A_0). 
            \end{array}
        \end{equation}
        Hence $t=s$ and we distinguish between three different cases. 

        \medskip
        \noindent \textbf{Case 1:} $\mathbf{A_0 \geq \bm{\gamma}}.$ As long as $A(t) \geq  \gamma$, we have that
        \begin{equation} \label{eq:evolution of A}
            A(t) = A_c^* + (A_0 - A_c^*) \exp\left(-\theta \lambda_c  t\right), 
        \end{equation}
        from which we infer $A(t)> \gamma$ for any $t > 0$ and $\displaystyle \lim_{t \to +\infty} A(t) = A_c^*$. 

        \medskip
        \noindent \textbf{Case 2:} $\mathbf{A_0 \in (-\bm{ \gamma}, \bm{\gamma})}.$ As long as $A(t) \in (- \gamma,  \gamma)$ we have 
        \begin{equation*}
            A(t)=-\frac{(1-\theta)\lambda_A \left(\frac{\omega_p}{2} + \varepsilon - a_p\right)}{(1-\theta) \lambda_A \frac{\omega_p}{ 2\gamma} - \theta \lambda_c} +  \left(A_0 + \overbrace{\frac{(1-\theta)\lambda_A \left(\frac{\omega_p}{2} + \varepsilon - a_p\right)}{(1-\theta) \lambda_A \frac{\omega_p}{ 2\gamma} - \theta \lambda_c}}^{\mathcal{T}_1 }\right) \exp\left(\overbrace{\left((1-\theta) \lambda_A \frac{\omega_p}{2\gamma} - \theta \lambda_c\right)}^{\mathcal{T}_2} t\right). 
        \end{equation*}
        Since (\ref{eq58}) and (\ref{eq59}) imply that $\mathcal{T}_2 < 0$ and $\mathcal{T}_1 < - \gamma$, we deduce that $A_0 + \mathcal{T}_1 < 0$ and that $A(t)$ grows toward $-\mathcal{T}_1 >  \gamma$. Therefore $A(t)$ reaches $ \gamma$ in a finite time and then we fall into Case 1. 

        \medskip
        \noindent \textbf{Case 3:} $\mathbf{A_0 \le -\bm{ \gamma}}.$ As long as $A(t) \le - \gamma$ we have
        \begin{equation*}
            A(t)=-\frac{(1-\theta)\lambda_A(a_p-\varepsilon)}{\theta \lambda_c} +  \left(A_0 + \overbrace{\frac{(1-\theta)\lambda_A(a_p-\varepsilon)}{\theta \lambda_c}}^{\mathcal{T}_3} \right) \exp\left(-\theta \lambda_c  t\right).
        \end{equation*}
        Since (\ref{eq58}) and (\ref{eq59}) imply that $0 < \mathcal{T}_3 <  \gamma $, we find that $A_0 + \mathcal{T}_3 < 0$ and $A(t)$ grows toward $-\mathcal{T}_3 \in (- \gamma, 0)$. Therefore, after a finite time we fall back into Case 2. 
    \end{proof}

    The following corollary is obvious and represents a stronger version of Proposition \ref{prop5}, valid only for the controlled model. 

    \begin{corollary} \label{equibnd}
        Consider the controlled model and suppose that the initial distribution $ g^\init(A)$ has compact support, then the supports $\textnormal{supp}( g(t,\cdot))$ are equi-bounded for any $t \in \R_+$. 
    \end{corollary}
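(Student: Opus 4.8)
The plan is to leverage Proposition \ref{teo1} directly: since the characteristic curves of the transport equation \eqref{eq67} all converge (in fact exponentially) toward the single point $A_c^*$, the support of $g(t,\cdot)$ cannot escape to infinity. Concretely, suppose $\textnormal{supp}(g^\init) \subset [i_1, i_2]$ for some $-\infty < i_1 \le i_2 < +\infty$. The solution $g(t,A)$ is transported along the characteristics $A(t)$ solving the second line of \eqref{eq:characteristic}, so that $A \in \textnormal{supp}(g(t,\cdot))$ if and only if $A = A(t)$ for some characteristic issued from $A_0 \in [i_1, i_2]$. Hence it suffices to show that the family of characteristic curves $\{A(t) : A_0 \in [i_1,i_2],\ t \ge 0\}$ is a bounded subset of $\R$.

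To see this I would examine each of the three regimes in the proof of Proposition \ref{teo1}. In Case 1 ($A_0 \ge \gamma$), the explicit formula \eqref{eq:evolution of A} gives $A(t) = A_c^* + (A_0 - A_c^*) e^{-\theta \lambda_c t/2}$, which is a monotone interpolation between $A_0$ and $A_c^*$, so $|A(t)| \le \max\{|A_0|, |A_c^*|\}$ for all $t \ge 0$. In Case 2 ($A_0 \in (-\gamma,\gamma)$), the curve stays in $(-\gamma, \gamma)$ until it reaches $\gamma$ in finite time, and from then on it is governed by Case 1 with initial value $\gamma$; thus $|A(t)| \le \max\{\gamma, |A_c^*|\}$ throughout. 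Similarly, in Case 3 ($A_0 \le -\gamma$) the curve first increases monotonically toward $-\mathcal{T}_3 \in (-\gamma, 0)$, reaching $-\gamma$ in finite time, after which it is governed by Case 2 and then Case 1; along the whole trajectory one has $|A(t)| \le \max\{|A_0|, \gamma, |A_c^*|\}$. In every case the bound is uniform in $t$, and taking the supremum over $A_0 \in [i_1, i_2]$ gives a single constant $R = \max\{|i_1|, |i_2|, \gamma, |A_c^*|\}$ with $\textnormal{supp}(g(t,\cdot)) \subset [-R, R]$ for all $t \ge 0$, which is precisely the asserted equi-boundedness.

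There is no real obstacle here — this is why the corollary is labelled ``obvious'' — and the only point requiring a word of care is the matching of trajectories at the interfaces $A = \pm \gamma$: one must note that a characteristic cannot cross $\gamma$ and then come back below it (in Case 1 the drift is strictly positive whenever $A < A_c^*$ and $A_c^* > \gamma$, so $A(t)$ is increasing while below $A_c^*$ and the level $\gamma$ is crossed only once, upward), and likewise it cannot oscillate across $-\gamma$; this guarantees that concatenating the three explicit formulas indeed produces a single well-defined curve whose range is contained in $[-R, R]$. The positivity and $L^1$-mass arguments from the Remark following Proposition \ref{prop5} carry over verbatim, but they are not needed for the statement of the corollary, which concerns only the support.
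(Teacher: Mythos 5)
Your proof is correct and follows exactly the route the paper intends: the corollary is stated as an immediate consequence of Proposition \ref{teo1}, and your argument simply makes explicit that the three characteristic regimes each yield a time-uniform bound on $|A(t)|$, so the image of the compact initial support under the characteristic flow stays in a fixed interval $[-R,R]$ with $R=\max\{|i_1|,|i_2|,\gamma,|A_c^*|\}$. The extra care you take at the interfaces $A=\pm\gamma$ (monotone, one-way crossings) is a correct and welcome elaboration of what the paper leaves implicit.
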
 

    In particular, this result implies that the controlled model satisfies both requirements made at the beginning of Section \ref{control}. Additionally, we deduce the following convergence to equilibrium. 

    \begin{corollary} \label{cor2}
        Suppose that the initial distribution $ g^\init(A)$ has compact support. Then, the solutions to the transport equation (\ref{eq67}) converge in the sense of distributions toward the equilibrium $ g^{\infty}(A)=\delta(A-A_c^*)$. 
    \end{corollary}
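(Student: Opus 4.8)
The plan is to exploit the two facts already established for the controlled transport equation \eqref{eq67}: the explicit description of its characteristics in Proposition \ref{teo1} and the equi-boundedness of supports in Corollary \ref{equibnd}. First I would note that the velocity field $b(A)=\theta\frac{\lambda_c}{2}A-(1-\theta)\lambda_A(\bar A\omega_p+\varepsilon-a_p)$ appearing in \eqref{eq67} is globally Lipschitz on $\R$, because $\bar A$ from \eqref{eq2} is continuous and piecewise affine; hence the second equation of \eqref{eq:characteristic}, namely $\dot A(s)=-b(A(s))$, generates a well-defined (bi-Lipschitz) flow map $\Phi_t\colon A_0\mapsto A(t;A_0)$ with no finite-time blow-up, and integrating the third equation of \eqref{eq:characteristic} along the characteristics shows that the total mass is transported without loss. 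Consequently the solution of \eqref{eq67} is exactly the push-forward $g(t,\cdot)=(\Phi_t)_\#\,g^\init$, i.e.\ for every $\phi\in C_b(\R)$ one has $\int_\R\phi(A)\,g(t,A)\,\dd A=\int_\R\phi(\Phi_t(A_0))\,g^\init(A_0)\,\dd A_0$ for all $t\ge 0$.

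Then I would simply pass to the limit $t\to+\infty$ on the right-hand side. By Proposition \ref{teo1}, $\Phi_t(A_0)\to A_c^*$ as $t\to+\infty$ for every fixed $A_0\in\R$ (this is precisely the exponential convergence of the characteristic curves proved there, in each of the three regimes $A_0\ge\gamma$, $A_0\in(-\gamma,\gamma)$, $A_0\le-\gamma$). Since $g^\init$ has compact support, the function $A_0\mapsto\|\phi\|_{L^\infty(\R)}\,g^\init(A_0)$ is a $t$-independent integrable majorant, so the dominated convergence theorem yields $\int_\R\phi(A)\,g(t,A)\,\dd A\to\phi(A_c^*)\int_\R g^\init(A_0)\,\dd A_0=\phi(A_c^*)$, where the last equality uses that the total mass $\int_\R g^\init\,\dd A_0=\int_{\R\times I}f^\init\,\dd A\,\dd w=1$ is conserved by \eqref{eq67}. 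As $\phi\in C_b(\R)$ is arbitrary, this is exactly $g(t,\cdot)\rightharpoonup\delta(\cdot-A_c^*)$, i.e.\ convergence in the sense of distributions (in fact in the narrow topology of probability measures, hence a fortiori as distributions).

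The step I expect to require the most care is the passage to the limit itself: the convergence $\Phi_t(A_0)\to A_c^*$ is \emph{not} uniform in $A_0$, since a characteristic issued from a very negative $A_0$ must traverse all three regimes of \eqref{eq:characteristic} and the transit times grow with $|A_0|$. This is why the argument has to be routed through dominated convergence against a fixed test function rather than through a uniform-in-$A_0$ estimate, the compact support of $g^\init$ supplying exactly the envelope one needs. No further hypothesis on $g^\init$ (sign, integrability) is required, because nonnegativity and $L^1(\R)$ regularity are inherited from the transport structure of \eqref{eq67}, just as in the Remark following Proposition \ref{prop5}; and the global Lipschitz bound on $b$ is what guarantees that $\Phi_t$ is genuinely a global flow, so that the push-forward representation above is legitimate.
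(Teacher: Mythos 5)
Your argument is correct and follows essentially the same route as the paper: both represent the solution of \eqref{eq67} as transport of $g^\init$ along the characteristics of Proposition \ref{teo1}, pull the test function back to the initial datum, and conclude from the pointwise convergence $\Phi_t(A_0) \to A_c^*$ together with mass conservation. Your push-forward/dominated-convergence phrasing merely packages more cleanly the paper's explicit change of variables in the regime $A > \gamma$ (in particular, it avoids having to single out the entry time $T$ into that regime).
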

    \begin{proof}
        We want to show that for any $\varphi \in C_c^\infty(\R)$ it holds
        \begin{equation*}
            \lim_{t \to +\infty} \int_\R \varphi(A)    g(t,A)   \dd A = \varphi(A_c^*). 
        \end{equation*}
        Let us first determine the explicit solution of equation \eqref{eq67} along the characteristic curves. By the method of characteristics, we look for solutions $(t(s),A(s), g(s,A(s)))$ of \eqref{eq:characteristic}. From the first equation we recover $ t = s$. In order to solve the second and third ones, we suppose that $A(s) > \gamma$ (thanks to Proposition \ref{teo1}, this assumption does not entail any loss of generality). Then, the conditions on $A(s)$ provide
        \begin{equation*}
            A(s) = A(0) e^{-\theta \lambda_c s} + A_c^* \left( 1 - e^{-\theta \lambda_c s} \right),
        \end{equation*}
        which can be inverted using the initial condition $A(0) = A_0$, to get
        \begin{equation} \label{eq:A}
            A_0 = A(s) e^{\theta \lambda_c s} - A_c^* \left( e^{\theta \lambda_c s} - 1 \right).
        \end{equation}
        Moreover, since $A(s) > \gamma$ for any $s$, from the third equation of \eqref{eq:characteristic} we deduce 
        \begin{equation*}
             g(s,A(s)) =  g(0,A_0) e^{\theta \lambda_c s}.
        \end{equation*}
        Combining the initial condition $g(0,A_0) =  g^\init(A_0)$ and the inverse relation \eqref{eq:A}, and going back to the original variables $(t,A)$, we finally conclude that the solution to \eqref{eq67} writes
        \begin{equation} \label{eq:solution of transport}
              g(t,A) =  g^\init\big( A e^{\theta \lambda_c t} - A^*_c \left( e^{\theta \lambda_c t} - 1 \right) \big) e^{\theta \lambda_c  t}.
        \end{equation}
        Now, Proposition \ref{teo1} ensures that the characteristics $A(t)$ fall into the interval $\left( \gamma, +\infty \right)$ in finite time, where we observe exponential convergence toward $A_c^*$ with explicit rate \eqref{eq:evolution of A} and we can apply the previous reasoning. Let us thus fix $T > 0$ as the minimal time such that $A(T) > \gamma$. Then, for any $ t \geq T$, the solution $  g(t,A)$ to \eqref{eq67} is given by \eqref{eq:solution of transport}. Therefore, we can successively compute
        \begin{equation*}
            \begin{split}
                \lim_{t \to +\infty} \int_{\R} \varphi(A) g(t,A) \dd A & = \lim_{\substack{t \to \infty \\[1mm]  t \geq T}} \int_{A > \gamma} \varphi(A)  g^\init\left( A e^{\theta \lambda_c t} - A^*_c \left( e^{\theta \lambda_c t} - 1 \right) \right) e^{\theta \lambda_c  t} \dd A \\[2mm]
                &= \lim_{\substack{t \to +\infty \\[1mm]  t \geq T}} \int_{B > A_c^* - \left( A_c^* - \gamma \right) e^{\theta \lambda_c t}} \varphi\left( B e^{-\theta \lambda_c t} + A^*_c \left( 1 - e^{-\theta \lambda_c t} \right) \right)  \\
                & \qquad \times g^\init(B) \dd B \\[2mm]
                &= \varphi(A_c^*) \int_{\R}  g^\init(B) \dd B \\[4mm] 
                & = \varphi(A_c^*),
            \end{split}
        \end{equation*}
        recovering the desired convergence of $  g(t,A)$ toward $\delta(A - A_c^*)$, in the limit $t \to +\infty$.
    \end{proof}

    Since $A_c^*> \gamma$, Corollary \ref{cor2} implies that all agents tend to become active. 

    \begin{remark}
        The transport equation (\ref{eq67}) preserves the positivity of the initial datum and hence also its $L^1(\R)$ regularity, thanks to mass preservation. Note that it cannot preserve the $L^p(\R)$ regularity for $p >1$, since the solutions converge toward a Dirac delta. 
    \end{remark}

\subsection{Opinion dynamics and the influence of leaders} \label{evol2}

    Going back to the original distribution $f(t,A,w)$ and still focusing on the controlled model, together with the assumptions $G \equiv 1$ and $D(w)=\sqrt{1-w^2}$, if the initial distribution $f^\init(A,w)$ has a compact support, Proposition \ref{teo1} ensures that all agents become active in finite time. Therefore, starting from such finite time, the mass of $f(t,A,w)$ with respect to $A$ is concentrated in the interval $( \gamma,+\infty)$. Integrating the Fokker--Planck equation \eqref{eq53} for $A \in ( \gamma,+\infty)$, we then infer that the marginal distribution
    \begin{equation*}
        h(t,w) = \int_\R f(t,A,w)   \dd A,
    \end{equation*}
    introduced in Subsection \ref{subsec3}, is solution to
    \begin{equation} \label{eqh1}
    \begin{split}
        \partial_t h(t,w) &= \frac{\sigma_p^2}{2} \partial^2_w\Big((1-w^2)h(t,w)\Big) +\lambda_p\partial_w\left(\mathcal{K}[f](t,w)\int_\R (\bar{A}\omega_p+\varepsilon) f(t,A,w)   \dd A \right) \\[2mm] 
        &=\frac{\sigma_p^2}{2} \partial^2_w\Big((1-w^2)h(t,w)\Big) + \lambda_p (\omega_p + \varepsilon)^2 \partial_w\Big((w-m_w^\init) h(t,w)\Big),  
    \end{split}
    \end{equation} 
    where $m_w^\init = m_w(0)$ is the initial average opinion of the population, which is preserved over time since we are considering only agent--agent interactions. It is well-known that the adjoint equation of \eqref{eqh1} corresponds to a Wright--Fisher-type model with constant coefficients, whose analytical properties have been investigated in several works, see e.g. \cite{CheStr,EpsMaz1,EpsMaz2}. In particular, its solution $h(t,w)$ belongs to $C^\infty(\R_+ \times (-1,1))$ and, close to the boundaries of $I$, behaves like the equilibrium of \eqref{eqh1} given by
    \begin{equation} \label{h_inf}
        h^\infty (w) = C   (1-w)^{-1+\frac{(\omega_p+\varepsilon)^2}{\nu_p} (1-m_w^\init)} (1+w)^{-1+\frac{(\omega_p+\varepsilon)^2}{\nu_p} (1+m_w^\init)},
    \end{equation}
    where $C > 0$ is a suitable normalization constant such that $\int_I h^\infty (w)   \dd w = 1$. Moreover, it is possible to prove that $h(t,w)$ converges in $L^1(I)$ at a rate which is at least $\smallO\left(t^{-1/2}\right)$ toward this equilibrium $h^\infty(w)$ \cite{FurPulTerTos,MR4020534}\footnote[1]{We highlight that the rate of convergence of $\smallO\left(t^{-1/2}\right)$ is valid \cite{FurPulTerTos} for any range of the parameters $\omega_p$, $\eps$, $\nu_p$, and $m_w^\init \in (-1,1)$, but the authors in \cite{MR4020534} were actually able to prove exponential relaxation toward the equilibrium \eqref{h_inf} as soon as these parameters satisfy the specific constraints $1 - \frac{\nu_p}{2(\omega_p+\eps)^2} > 0$ if $m_w^\init = 0$, and $1 - \frac{\nu_p}{2(\omega_p+\eps)^2} \geq |m_w^\init|$ otherwise. Note in particular that the rates of convergence of $h(t,w)$ toward $h^\infty(w)$ correspond to those obtained in \cite{FurPulTerTos,MR4020534}, because we proved (see Proposition \ref{teo1}) that the agents tend to become active exponentially fast.}. Note that the exponents of (\ref{h_inf}) depend explicitly on the quantities (\ref{pesi}), computed for a distribution $f(t,A,w)$ for which all agents are active. We thus deduce the following theorem. 

    \begin{theorem}
        Let us suppose that $G \equiv 1$, $D(w)=\sqrt{1-w^2}$, and the initial distribution $f^\init(A,w)$ has a compact support. If $f(t,A,w)$ evolves according to the controlled model (\ref{eq53}), then the marginal distribution $h(t,w)$ converges in $L^1(I)$ toward the equilibrium distribution $h^\infty(w)$ given by Eq. (\ref{h_inf}), at a rate which is at least $ \smallO\left(t^{-1/2}\right)$. 
    \end{theorem}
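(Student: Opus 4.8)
The plan is to decompose the convergence into two ingredients already available in the excerpt. First, by Proposition \ref{teo1} and Corollary \ref{cor2}, under the standing assumptions the characteristic curves of the activity variable reach $(\gamma,+\infty)$ in finite time and the marginal $g(t,\cdot)$ converges to $\delta(A-A_c^*)$; in particular there is a finite time $T_0>0$ after which the entire mass of $f(t,A,w)$ in the $A$-variable is supported in $(\gamma,+\infty)$, where $\bar A\equiv 1$. For $t\ge T_0$ the opinion marginal $h(t,w)=\int_\R f(t,A,w)\,\dd A$ therefore satisfies the closed, constant-coefficient Fokker--Planck equation \eqref{eqh1}, since on $\{A>\gamma\}$ one has $\int_\R(\bar A\omega_p+\varepsilon)f(t,A,w)\,\dd A=(\omega_p+\varepsilon)h(t,w)$ and $\mathcal K[f](t,w)=(\omega_p+\varepsilon)(w-m_w^\init)$ by conservation of the mean opinion in the agent-agent dynamics. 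Thus the problem reduces to the already-known relaxation theory for the Wright--Fisher-type (one-dimensional Beta) Fokker--Planck equation.

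Second, I would invoke the cited $L^1$-relaxation estimates for \eqref{eqh1}: it is shown in \cite{FurPulTerTos,MR4020534} that any solution of a Fokker--Planck equation of this exact form converges in $L^1(I)$ toward its Beta-type steady state $h^\infty$, at a rate which is at least $\smallO(t^{-1/2})$ for every admissible choice of the parameters $\omega_p,\varepsilon,\nu_p$ and $m_w^\init\in(-1,1)$ (with exponential rate under the additional spectral-gap condition recalled in the footnote). Applying this to $\tilde h(\tau,w):=h(T_0+\tau,w)$, which solves \eqref{eqh1} with initial datum $h(T_0,\cdot)\in L^1(I)$, yields $\|\tilde h(\tau,\cdot)-h^\infty\|_{L^1(I)}=\smallO(\tau^{-1/2})$ as $\tau\to+\infty$, i.e. $\|h(t,\cdot)-h^\infty\|_{L^1(I)}=\smallO\big((t-T_0)^{-1/2}\big)=\smallO(t^{-1/2})$.

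The one remaining point is to make sure the finite waiting time $T_0$ does not degrade the rate and that the pre-$T_0$ transient is harmless. This is immediate: $T_0$ is finite and depends only on the (compact) support of $f^\init$ and on the model parameters, not on $t$; since $(t-T_0)^{-1/2}/t^{-1/2}\to 1$ as $t\to+\infty$, the $\smallO(t^{-1/2})$ bound is preserved after absorbing the constant. This is precisely the remark made in the footnote, namely that the rates for $h$ match those of \cite{FurPulTerTos,MR4020534} because, by Proposition \ref{teo1}, the agents become active exponentially fast, so the "error" committed in using the reduced equation \eqref{eqh1} before time $T_0$, as well as the exponentially small discrepancy caused by the tail of the $A$-marginal outside $(\gamma,+\infty)$, is negligible compared with the polynomial rate.

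The main obstacle, and the only genuinely delicate step, is justifying rigorously that \eqref{eqh1} really closes for $t\ge T_0$: one must check that once $\mathrm{supp}\,g(t,\cdot)\subset(\gamma,+\infty)$ it stays there for all later times (which follows from Proposition \ref{teo1}, since the characteristics starting in $(\gamma,+\infty)$ remain there and converge monotonically to $A_c^*>\gamma$), and that integrating \eqref{eq53} over $A\in(\gamma,+\infty)$ produces no boundary flux at $A=\gamma$ — guaranteed because the density and its $A$-flux vanish at $A=\gamma$ once the support has entered the open half-line. Granting these facts, which are direct consequences of results proved earlier in the excerpt, the theorem follows by citing the established relaxation estimates for the Wright--Fisher Fokker--Planck equation.
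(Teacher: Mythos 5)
Your proposal is correct and follows essentially the same route as the paper: use Proposition \ref{teo1} to conclude that all agents become active in finite time, observe that from then on the marginal $h(t,w)$ solves the closed constant-coefficient Wright--Fisher-type equation \eqref{eqh1} with the preserved mean $m_w^\init$, and invoke the $L^1$ relaxation estimates of \cite{FurPulTerTos,MR4020534} to obtain the $\smallO\left(t^{-1/2}\right)$ rate. Your additional remarks on the harmlessness of the finite waiting time and the absence of a boundary flux at $A=\gamma$ are correct refinements of the paper's (more informal) argument.
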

    
    Observe that the influence of the initial condition $f^\init(A,w)$ is only seen in the equilibrium $h^\infty(w)$ through the preserved initial average opinion $m_w^\init$. Moreover, notice that if we consider the original model and if $a_p$ belongs to case (i), then all agents tend to become active (linearly fast, see Proposition \ref{prop5}) and, in this situation, the evolution of $h(t,w)$ is again given by (\ref{eqh1}). Therefore, the marginal $h(t,w)$ converges toward the equilibrium $h^\infty(w)$. Obviously, an analogous situation occurs if $a_p$ belongs to case (iii), with $\varepsilon$ appearing at the exponents of (\ref{h_inf}) instead of $\omega_p + \varepsilon$.

    Consider now the presence of opinion leaders and recall that (see Eq. (\ref{eq1000})) the evolution of the activity level depends on the quantities $\omega_p + \omega_\ell$, $a_p + a_\ell$, and $2\varepsilon$, defined at the beginning of Section \ref{control}. Thanks to the results obtained in the previous Subsection \ref{evol1}, we are now able to demonstrate Conjecture \ref{prop2} (i.e., the relaxation of the population's average opinion $m_w(t)$ toward the leaders' average opinion $\mu_\ell$) under the realistic assumption that $f^\init(A,w)$ has a compact support. 

    \begin{proposition} \label{prop6}
        Let us consider the original kinetic model (\ref{eq29}) in the presence of opinion leaders, with $G \equiv 1$, and suppose that the initial distribution $f^\init(A,w)$ has a compact support. Then, the average opinion of the population $m_w(t)$ converges toward the average opinion of the leaders $\mu_\ell$, in the limit $t \to +\infty$. 
    \end{proposition}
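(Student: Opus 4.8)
The plan is to reduce the problem to an ODE for $m_w(t)$ and show that this ODE forces relaxation to $\mu_\ell$. The starting point is Eq.~(\ref{eq45}), which (since $G\equiv 1$) gives
\begin{equation*}
    \frac{\dd}{\dd t} m_w(t) = -\lambda_\ell \int_{\R\times I} (\bar{A}\omega_\ell + \varepsilon)(w-\mu_\ell) f(t,A,w)\,\dd A\,\dd w.
\end{equation*}
Here one must be slightly careful: the macroscopic average $m_w$ in (\ref{eq17}) is normalized by the total mass, which is conserved and equal to $1$, so $m_w(t) = \int_{\R\times I} w f(t,A,w)\,\dd A\,\dd w$ outright. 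The obstacle is that the right-hand side is not simply a multiple of $(m_w(t)-\mu_\ell)$, because the weight $\bar A\omega_\ell+\varepsilon$ depends on $A$ and cannot be pulled out of the integral. This is exactly where the results of Subsection~\ref{evol1} enter.

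First I would invoke Proposition~\ref{teo1} together with Corollary~\ref{equibnd} (more precisely, the analogue for the uncontrolled model under the relevant hypotheses on $a_p+a_\ell$, or the controlled version, together with the observation right after Corollary~\ref{cor2} that all agents tend to become active). Since $f^\init$ has compact support, the activity marginal $g(t,A) = \int_I f(t,A,w)\,\dd w$ has, from some finite time $T$ onward, its support contained in $(\gamma,+\infty)$: all agents are active. On that region $\bar A = 1$, so the weight becomes the \emph{constant} $\omega_\ell+\varepsilon$. Hence for $t\ge T$ the evolution equation collapses to the clean linear ODE
\begin{equation*}
    \frac{\dd}{\dd t} m_w(t) = -\lambda_\ell(\omega_\ell+\varepsilon)\,(m_w(t)-\mu_\ell),
\end{equation*}
using again that $\int_{\R\times I} f(t,A,w)\,\dd A\,\dd w = 1$. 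Integrating gives $m_w(t) - \mu_\ell = (m_w(T)-\mu_\ell)\,e^{-\lambda_\ell(\omega_\ell+\varepsilon)(t-T)}$, and since $\lambda_\ell(\omega_\ell+\varepsilon)>0$ this converges to $0$ as $t\to+\infty$, proving the claim (in fact with an explicit exponential rate).

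The one point that needs care — and which I expect to be the main technical obstacle — is justifying the passage to the reduced description rigorously: namely, that "all agents active from time $T$ on" genuinely lets us replace $\bar A$ by $1$ inside the $w$-integral in (\ref{eq45}), and that the transport dynamics in $A$ (governed by the drift term in $Q_p + Q_\ell$ acting on $A$, now with parameters $\omega_p+\omega_\ell$, $a_p+a_\ell$, $2\varepsilon$ as noted after Eq.~(\ref{eq1000})) indeed pushes all mass into $(\gamma,+\infty)$ in finite time. Under the hypothesis that the combined coefficient falls in case~(i) this is immediate from Proposition~\ref{prop5}; in the more delicate case~(ii) for the uncontrolled model the characteristics split and one would instead appeal to the controlled model via Proposition~\ref{teo1}, as done in the preceding subsection. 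In either case, once the support in $A$ lies beyond $\gamma$, the weight is exactly constant and no approximation error remains, so the reduction is exact rather than asymptotic, and the ODE argument closes the proof.
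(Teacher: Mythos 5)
Your reduction to an ODE via Eq.~(\ref{eq45}) and the observation that the weight $\bar A\omega_\ell+\varepsilon$ becomes constant once the support in $A$ leaves $(-\gamma,\gamma)$ is exactly the paper's strategy in the easy cases, but your argument has a genuine gap precisely at the point you flag as ``the main technical obstacle,'' and your proposed fix does not work. Proposition~\ref{prop6} concerns the \emph{uncontrolled} model, so you cannot ``appeal to the controlled model via Proposition~\ref{teo1}'': Proposition~\ref{teo1} and Corollary~\ref{equibnd} are statements about the dynamics with the extra drift $\tfrac{\lambda_c}{2}\partial_A(Af)$, and invoking them here would prove a different proposition. In case (ii) for the uncontrolled dynamics the characteristics genuinely split --- part of the population becomes active and part becomes inactive --- so the weight is \emph{not} eventually constant on the support of $f$: it equals $\omega_\ell+\varepsilon$ on one piece and $\varepsilon$ on the other, and the right-hand side of (\ref{eq45}) does not collapse to a multiple of $m_w(t)-\mu_\ell$. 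Your proof as written therefore only covers case (i) (and, with the obvious modification $\omega_\ell+\varepsilon\mapsto\varepsilon$, case (iii)).

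The missing idea is the decomposition used in the paper for case (ii): write
\begin{equation*}
  m_w(t)=\rho_i(t)\,m_i(t)+\rho_a(t)\,m_a(t),
\end{equation*}
where $m_i$ and $m_a$ are the average opinions \emph{within} the inactive and active subpopulations. For large times the mass fractions $\rho_i,\rho_a$ are constant (the undecided class empties, up to the negligible set of agents sitting exactly at the stationary activity level $A^*$), and on each subpopulation the weight is a genuine constant ($\varepsilon$ and $\omega_\ell+\varepsilon$ respectively). One then checks, using the boundary conditions (\ref{eq35}) and the absence of mass at $A=\mp\gamma$ so that the flux terms in $A$ vanish, that each group average satisfies its own closed linear ODE, e.g.\ $\tfrac{\dd}{\dd t}m_i=-\lambda_\ell\varepsilon\,(m_i-\mu_\ell)$, hence both $m_i$ and $m_a$ relax (exponentially, at different rates) to $\mu_\ell$, and $\rho_i+\rho_a\equiv 1$ gives the claim. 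Without this group-wise argument the proof is incomplete in the most relevant parameter regime.
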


    \begin{proof}
        Since the interactions agent--agent preserve the average opinion (see Subsection \ref{subsec2}), we can reduce our study to the sole interactions agent--leader. As pointed out at the beginning of Subsection \ref{evol1}, only three scenarios are possible: 
        \begin{itemize}
            \item if $a_p + a_\ell$ belongs to case (i), then for large times all agents become active; \\[-3mm]
            \item if $a_p + a_\ell$ belongs to case (iii), then for large times all agents become inactive; \\[-3mm]
            \item if $a_p + a_\ell$ belongs to case (ii), then for large times there are only active agents and inactive agents (under the realistic assumption that the initial number of agents with an activity level $A^* \in (- \gamma,  \gamma)$, defined as the unique solution to $\bar{A}(\omega_p + \omega_\ell) + 2\varepsilon - a_p - a_\ell = 0$, is negligible). 
        \end{itemize}
        In particular, in all three cases the mass fractions \eqref{eq:mass fractions} of the population are definitely constant, namely $\rho_i(t) = \bar{\rho}_i$, $\rho_u(t) = \bar{\rho}_u$ and $\rho_a(t) = \bar{\rho}_a$ for $t$ large enough. 

        If $a_p+a_\ell$ belongs to cases (i) or (iii), for large times we find that (see Eq. (\ref{eq45})) 
        \begin{equation*}
            \frac{\dd}{\dd t} m_w(t) = - \lambda_\ell (\omega_\ell + \varepsilon) (m_w(t) - \mu_\ell),
        \end{equation*}
        and 
        \begin{equation*}
            \frac{\dd}{\dd t} m_w(t) = - \lambda_\ell   \varepsilon (m_w(t) - \mu_\ell), 
        \end{equation*}
        respectively, thus $m_w(t)$ readily converges to $\mu_\ell$ when $t \to +\infty$. 

        Suppose now that $a_p+a_\ell$ belongs to case (ii). By defining the average opinion in the classes of active and inactive agents as
        \begin{equation*}
            \begin{split}
                m_a(t) &= \frac{1}{\rho_a(t)}\int_{(\gamma, +\infty) \times I} w f(t,A,w) \dd A  \dd w, \\[4mm]
                m_i(t) &= \frac{1}{\rho_i(t)}\int_{(-\infty, -\gamma) \times I} w f(t,A,w) \dd A  \dd w,
            \end{split}
        \end{equation*}
        and recalling that $\sum_{j \in \{a,u,i\}}\rho_j \equiv 1$ from the conservation of mass, we find that the average population's opinion $m_w(t)$ given by \eqref{eq:m_w} simplifies, for large times, to
        \begin{equation*}
            m_w(t) = \bar{\rho}_i m_i(t) + \bar{\rho}_a m_a(t). 
        \end{equation*} 
        If we can show that each $m_\bullet(t) \underset{t \to +\infty}{\longrightarrow} \mu_\ell$, then the thesis would follow from the fact that $\bar{\rho}_i + \bar{\rho}_a = 1$. For $m_i(t)$, since $\rho_i(t)$ is definitely constant, we find that 
        \begin{equation*}
        \begin{split}
            \frac{\dd}{\dd t} & m_i(t) = \frac{1}{\bar{\rho}_i}  \frac{\dd}{\dd t} \int_{ (-\infty, -\gamma) \times I} w   f(t,A,w) \dd A \dd w \\[2mm] 
            & = - \frac{1}{\bar{\rho}_i} \lambda_\ell   \varepsilon \int_{ (-\infty, -\gamma) \times I} (w-\mu_\ell) f(t,A,w) \dd A \dd w - \frac{1}{\bar{\rho}_i} \lambda_A (2\eps - a_p - a_\ell) \int_I w f(t,A,w) \Big|_{A \to -\infty}^{A=- \gamma}   \dd w \\[4mm] 
            & = -\lambda_\ell   \varepsilon (m_i(t) - \mu_\ell), 
        \end{split}
        \end{equation*}
        since there are no agents with an activity level equal to $- \gamma$ and due to the boundary conditions \eqref{eq35}. Hence $m_i(t) \underset{t \to +\infty}{\longrightarrow} \mu_\ell$. The proof for $m_a(t)$ is similar and the thesis follows. 
    \end{proof} 

    Corollary \ref{cor2} implies that Proposition \ref{prop6} holds even if we are considering the controlled model coupled with opinion leaders, since all agents tend to become active, and in this case $A_c^*$ \eqref{eq68} would be defined as $A_c^*= \frac{1-\theta}{\theta} \frac{\lambda_A}{\lambda_c} (\omega_p + \omega_\ell + 2\eps - a_p - a_\ell)$. 
    
    Concerning the convergence to equilibrium for the marginal distribution $h(t,w)$ evolving under the controlled model, two situations must be distinguished. If we consider only the interactions with the leaders, then $h(t,w)$ evolves according to
    \begin{equation*} \label{eqh1_bis}
    \begin{split}
        \partial_t h(t,w) &= \frac{\sigma_\ell^2}{2} \partial^2_w\Big((1-w^2)h(t,w)\Big) +\lambda_\ell\partial_w\left(\mathcal{J}[f_\ell](w)\int_\R (\bar{A}\omega_\ell+\varepsilon) f(t,A,w)   \dd A \right) \\[2mm] 
        &=\frac{\sigma_\ell^2}{2} \partial^2_w\Big((1-w^2)h(t,w)\Big) + \lambda_\ell (\omega_\ell + \varepsilon) \partial_w\Big((w-\mu_\ell) h(t,w)\Big),  
    \end{split}
    \end{equation*} 
    and it converges again in $L^1(I)$ at a rate which is at least $\smallO\left(t^{-1/2}\right)$ toward the function \cite{FurPulTerTos,MR4020534} (recall Eq. \eqref{eqbeta2})
    \begin{equation*} \label{h_inf_bis}
        h^\infty (w) = C (1-w)^{-1 + \frac{\omega_\ell+\varepsilon}{\nu_\ell} (1-\mu_\ell)}(1+w)^{-1 + \frac{\omega_\ell+\varepsilon}{\nu_\ell} (1+\mu_\ell)},
    \end{equation*}
    where $C>0$ is a suitable normalization constant\footnote[2]{Recall that this global equilibrium is well defined as long as $\mu_\ell \not= \pm 1$. In those cases $h(t,w)$ would converge (in the sense of distributions) toward the Dirac delta $\delta(w-\mu_\ell)$. Indeed, assuming for example $\mu_\ell=1$, for any $\epsilon > 0$ 
    \begin{equation*}
        \int_{-1}^{1-\epsilon} h(t,w)   \dd w \le  \frac{1}{\epsilon} \int_{-1}^{1-\epsilon} (1-w) h(t,w)   \dd w \le \frac{1-m_w(t)}{\epsilon} \underset{t \to +\infty}{\longrightarrow} 0
    \end{equation*}
    since $m_w(t) \underset{t \to +\infty}{\longrightarrow} 1$.}. On the other hand, if we consider both interactions among the agents and with the leaders, the distribution $h(t,w)$ satisfies 
    \begin{equation} \label{eq100}
    \begin{split}
        \partial_t h(t,w) &= \frac{\sigma_p^2}{2} \partial^2_w\Big((1-w^2)h(t,w)\Big) +\lambda_p   (\omega_p + \varepsilon)^2 \partial_w\Big((w-m_w(t)) h(t,w)\Big) \\[2mm] 
        & \qquad + \frac{\sigma_\ell^2}{2} \partial^2_w\Big((1-w^2)h(t,w)\Big) +\lambda_\ell   (\omega_\ell + \varepsilon) \partial_w\Big((w-\mu_\ell) h(t,w)\Big), 
    \end{split}
    \end{equation}
    with no-flux boundary conditions
    \begin{equation} \label{eq:main BC}
            \begin{cases}
                \frac{\sigma_p^2+\sigma_\ell^2}{2} \partial_w\Big((1-w^2)h(t,w)\Big) + \lambda_p (\omega_p + \varepsilon)^2 (w-m_w(t)) h(t,w) + \lambda_\ell (\omega_\ell + \varepsilon)^2 (w-\mu_\ell) h(t,w)\Big|_{w=\pm 1} = 0,  \\[4mm] 
                (1-w^2) h(t,w) \Big|_{w=\pm 1} = 0. 
            \end{cases}
        \end{equation}
    In particular, it is easy to check that the mean opinion $m_w(t)$ of $h(t,w)$ evolves according to
    \begin{equation} \label{eq:evolution of mean}
        \frac{\dd}{\dd t} (m_w(t) - \mu_\ell) = -\lambda_\ell(\omega_\ell+\varepsilon) (m_w(t) - \mu_\ell),
    \end{equation}
    and thus converges exponentially to the average opinion of the leaders $\mu_\ell$, when $t \to +\infty$. Therefore, we expect $h(t,w)$ to relax toward the global equilibrium state (see Eq. \eqref{eqbeta3})
    \begin{equation} \label{eq101}
        h^\infty (w) = C^\infty (1-w)^{-1+\frac{\lambda_p(\omega_p+\varepsilon)^2+\lambda_\ell(\omega_\ell+\varepsilon)}{\sigma_p^2 + \sigma_\ell^2} (1-\mu_\ell)} (1+w)^{-1+\frac{\lambda_p(\omega_p+\varepsilon)^2+\lambda_\ell(\omega_\ell+\varepsilon)}{\sigma_p^2 + \sigma_\ell^2} (1+\mu_\ell)},
    \end{equation}
    where $C^\infty > 0$ is a normalization constant explicitly given by $C^\infty = 1/B(b_1^\infty,b_2^\infty)$, where $B$ denotes the Beta function
    \begin{equation*}
        B(b_1^\infty,b_2^\infty) = \int_I (1-w)^{-1+b_1^\infty} (1+w)^{-1+b_2^\infty} \dd w,
    \end{equation*}
    of parameters $b_1^\infty$ and $b_2^\infty$ given by
    \begin{equation*}
    \begin{split}
        b_1^\infty &= \frac{\lambda_p(\omega_p+\varepsilon)^2+\lambda_\ell(\omega_\ell+\varepsilon)}{\sigma_p^2 + \sigma_\ell^2} (1-\mu_\ell), \\[2mm]
        b_2^\infty &= \frac{\lambda_p(\omega_p+\varepsilon)^2+\lambda_\ell(\omega_\ell+\varepsilon)}{\sigma_p^2 + \sigma_\ell^2} (1+\mu_\ell).
    \end{split}
    \end{equation*}
    Notice that when opinion leaders are taken into account, the influence of the initial distribution $f^\init(A,w)$ is not seen in the equilibrium $h^\infty(w)$. Moreover, like before, similar considerations obviously hold if all agents become (in)active under the evolution of the original model.
    
    Proving the above convergence turns out to be rather problematic, due to the appearance of the time-dependent quantity $m_w(t)$ inside the Fokker--Planck equation \cite{20}. In order to deal with it, we shall require additional regularity properties on the distribution $h(t,w)$ satisfying \eqref{eq100}.

    As shown in \cite[Proposition 1]{2}, Eq. \eqref{eq100} preserves the nonnegativity of any nonnegative initial datum $h^\init \in L^1(I)$, and hence also its $L^1(I)$ regularity, since the equation is mass-preserving. In Proposition \ref{nonneg} we will actually use a more general argument to prove that both the uncontrolled model \eqref{eq29} and the controlled one \eqref{eq53} preserve the nonnegativity of any nonnegative initial datum $f^\init \in L^1(\R \times I)$ for the whole distribution $f(t,A,w)$.
    
    On the other hand, the $L^q(I)$ regularity for $q>1$ is not always preserved. Indeed, two situations must be distinguished. If at least one of the exponents of \eqref{eq101} is negative (which corresponds to opinion polarization, see Subsection \ref{pol_con}), then the resulting Beta distribution belongs to the space $L^q(I)$ for any $1 \leq q < q^*$, where
    \begin{equation} \label{eq:Lq global equilibrium}
        q^* = \left( 1 - \frac{\lambda_p(\omega_p+\varepsilon)^2+\lambda_\ell(\omega_\ell+\varepsilon)}{\sigma_p^2 + \sigma_\ell^2} (1-|\mu_\ell|) \right)^{-1}. 
    \end{equation}
    On the other hand, if both exponents are nonnegative (which corresponds to consensus formation), then the equilibrium state \eqref{eq101} belongs to any $L^q(I)$ with $q \geq 1$ (in particular, it is in $L^\infty(I)$).
    
    The following result of convergence to equilibrium for $h(t,w)$ is obtained under some \textit{a priori} assumptions on the regularity of the marginal distribution, in particular the condition that the latter remains $L^q(I)$ regular, for some $q > 1$. Unfortunately, this assumption is not obvious to demonstrate and we will only be able to recover it in the case of consensus formation. Our proof closely follows the strategy of \cite{20}, based on the relative entropy method and developed to tackle the convergence to equilibrium for a nonlocal Fokker--Planck equation with time-dependent coefficients (the main difference there being the structure of the equilibria, which are given by Gaussian densities instead of our Beta distributions).

    \begin{theorem} \label{teo:conv}
        Let $h(t,w)$ be a solution of the Fokker--Planck equation \eqref{eq100} with initial datum $h^\init \in L^q(I)$, for some $q > 1$. If $h \in L^\infty(\R_+, L^q(I))$, then it converges in $L^1(I)$ toward the equilibrium state $h^\infty(w)$ given by \eqref{eq101} when $t \to +\infty$, with a rate $\smallO\left(t^{-1/2}\right)$, namely
        \begin{equation*}
            \norm{h-h^\infty}_{L^1(I)} = \smallO\left(t^{-1/2}\right), \quad t \to +\infty.
        \end{equation*}
    \end{theorem}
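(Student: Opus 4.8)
The plan is to run a relative entropy argument against a \emph{moving}, quasi-stationary target, exploiting the fact recorded in \eqref{eq:evolution of mean} that $m_w(t)-\mu_\ell=(m_w^\init-\mu_\ell)\,e^{-\lambda_\ell(\omega_\ell+\eps)t}$ decays exponentially, so that \eqref{eq100} is an exponentially small nonautonomous perturbation of a genuine Wright--Fisher-type equation with global equilibrium $h^\infty$. Writing $\bar\sigma^2=\sigma_p^2+\sigma_\ell^2$, for every $t\ge0$ I let $h^t_\infty(w)$ denote the mass-one Beta density obtained by freezing $m_w(t)$ in the drift of \eqref{eq100}, i.e. the density with exponent $-1+\big(\lambda_p(\omega_p+\eps)^2(1-m_w(t))+\lambda_\ell(\omega_\ell+\eps)(1-\mu_\ell)\big)/\bar\sigma^2$ on $(1-w)$ and the symmetric one on $(1+w)$. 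Then $h^t_\infty\to h^\infty$ exponentially fast in $L^1(I)$, its exponents stay positive and bounded uniformly in $t$, and \eqref{eq100} can be put in the divergence form $\partial_t h=\tfrac{\bar\sigma^2}{2}\,\partial_w\!\big[(1-w^2)\,h^t_\infty\,\partial_w(h/h^t_\infty)\big]$ adapted to the entropy method.

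I would then differentiate the relative entropy $\mathcal H(t)=\int_I h\,\log(h/h^t_\infty)\,\dd w$. Using mass conservation and integrating by parts with the no-flux conditions \eqref{eq:main BC}, one gets
\begin{equation*}
    \frac{\dd}{\dd t}\,\mathcal H(t)=-\frac{\bar\sigma^2}{2}\int_I (1-w^2)\,h^t_\infty\,\Big|\partial_w\log\tfrac{h}{h^t_\infty}\Big|^2\,\frac{h}{h^t_\infty}\,\dd w\;-\;\int_I h\,\partial_t\log h^t_\infty\,\dd w\;=:\;-\mathcal I(t)+\mathcal E(t),
\end{equation*}
where $\mathcal I(t)\ge0$ is the (degenerate, boundary-vanishing) relative Fisher information. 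The point of using the \emph{moving} target, rather than the fixed $h^\infty$, is that the nonautonomous error $\mathcal E(t)$ only involves mild logarithmic weights: indeed $\partial_t\log h^t_\infty=\dot b_1(t)\log(1-w)+\dot b_2(t)\log(1+w)+\partial_t\log C(t)$, all coefficients being bounded in modulus by $C\,e^{-\lambda_\ell(\omega_\ell+\eps)t}$ (the exponents depend affinely on $m_w(t)$ and the log-normalization smoothly on the exponents), and since $\log(1\pm w)\in\bigcap_{q'<\infty}L^{q'}(I)$, Hölder's inequality against the a priori bound $\sup_t\norm{h(t,\cdot)}_{L^q(I)}<\infty$ yields $|\mathcal E(t)|\le C\,e^{-\lambda_\ell(\omega_\ell+\eps)t}$. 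By contrast, working against the fixed equilibrium would produce the term $\int_I h\,\partial_w\log h^\infty\,\dd w$, which contains the singular weights $(1\pm w)^{-1}\notin L^{q'}(I)$ and need not be finite. Consequently $\mathcal H$ is bounded, $t\mapsto\mathcal H(t)-\int_0^t C\,e^{-\lambda_\ell(\omega_\ell+\eps)s}\,\dd s$ is non-increasing, and $\int_0^\infty\mathcal I(t)\,\dd t<+\infty$.

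It remains to convert this into a decay rate for $\mathcal H$, and here I would follow the degenerate entropy method of \cite{FurPulTerTos,20}: because the weight $(1-w^2)h^t_\infty$ vanishes at $w=\pm1$, no plain logarithmic Sobolev inequality holds and exponential relaxation cannot be expected in general; instead, the uniform $L^q(I)$ bound is used to control the boundary-layer contribution and recover a weak entropy--entropy production inequality $\mathcal I(t)\ge\Psi(\mathcal H(t))$, uniformly in the moving target (this uniformity being exactly where the boundedness and positivity of the exponents of $h^t_\infty$ enter). Inserting this into $\tfrac{\dd}{\dd t}\mathcal H\le-\Psi(\mathcal H)+Ce^{-\lambda_\ell(\omega_\ell+\eps)t}$ and running a standard comparison argument (the error being integrable and negligible at large times) forces the algebraic decay $\mathcal H(t)=\smallO(t^{-1})$, exactly as in the autonomous Beta case of \cite{FurPulTerTos}. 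Finally, the Csiszár--Kullback--Pinsker inequality gives $\norm{h(t,\cdot)-h^t_\infty}_{L^1(I)}\le\sqrt{2\,\mathcal H(t)}=\smallO(t^{-1/2})$, and adding the exponentially small $\norm{h^t_\infty-h^\infty}_{L^1(I)}$ delivers $\norm{h-h^\infty}_{L^1(I)}=\smallO(t^{-1/2})$.

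The main obstacle is this last step: proving the quantitative, degenerate entropy--entropy production estimate in the Beta setting and keeping it uniform with respect to the moving equilibrium. This is precisely where the hypothesis $h\in L^\infty(\R_+,L^q(I))$ with $q>1$ is indispensable --- it is needed both to tame the boundary behaviour of $h/h^t_\infty$ in the functional inequality and to absorb the near-boundary weights in $\mathcal E(t)$ --- and it is also the structural reason why the method yields only the algebraic rate $\smallO(t^{-1/2})$ and not an exponential one.
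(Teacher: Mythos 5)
Your setup --- the moving Beta target obtained by freezing $m_w(t)$ in the drift, the identity $\frac{\dd}{\dd t}\mathcal H=-\mathcal I+\mathcal E$, and the bound $|\mathcal E(t)|\le C e^{-\lambda_\ell(\omega_\ell+\eps)t}$ obtained via H\"older's inequality against $\sup_t\norm{h(t,\cdot)}_{L^q(I)}$ and the $L^{q'}(I)$ integrability of $\log(1\pm w)$ --- coincides exactly with the paper's argument (your $h^t_\infty$ is the paper's local equilibrium $h^\eq$, your $\mathcal I$ its entropy production $\mathcal I_H$), up to and including the conclusion $\int_0^\infty\mathcal I_H(t)\,\dd t<+\infty$. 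The gap lies in how you convert this into a rate. You propose a weak entropy--entropy production inequality $\mathcal I_H(t)\ge\Psi(\mathcal H(t))$, uniform in the moving target, followed by an ODE comparison and Csisz\'ar--Kullback--Pinsker; but you never establish such an inequality, you yourself flag it as ``the main obstacle,'' and it is not what \cite{FurPulTerTos} proves either. Precisely because the weight $(1-w^2)h^\eq$ degenerates at $w=\pm1$, no functional inequality of this type is available here, and asserting it leaves the proof incomplete at its decisive step.

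The paper closes the argument without any entropy--entropy production inequality, by a detour through the squared Hellinger distance $\mathcal D^2(h|h^\eq)$. Three ingredients replace your $\Psi$: (i) the pointwise-in-time inequality $\mathcal D^2(h|h^\eq)(t)\le\tfrac12\,\mathcal I_H(h|h^\eq)(t)$ from \cite{FurPulTerTos}, which bounds a \emph{distance} by the \emph{entropy production} --- an estimate of a different (and much weaker) nature than $\mathcal I_H\ge\Psi(\mathcal H)$; (ii) a second, separate differential computation giving $\frac{\dd}{\dd t}\mathcal D^2\le-\mathcal I_D+\tilde\eta e^{-\tau t}$, whose perturbation term $\mathcal T_4$ is controlled using the $L^\infty(\R_+,L^q(I))$ regularity of the \emph{local equilibrium} $h^\eq$ (automatic from its explicit Beta form) rather than of $h$, so that $\mathcal D^2(t)+\frac{\tilde\eta}{\tau}e^{-\tau t}$ is non-increasing; and (iii) the elementary fact that a non-increasing, nonnegative, $L^1(\R_+)$ function decays like $\smallO(t^{-1})$, applied to this monotone quantity, whose integrability follows from (i) together with your own bound on $\int_0^\infty\mathcal I_H$. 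This yields $\mathcal D(h|h^\eq)=\smallO(t^{-1/2})$ and hence $\norm{h-h^\eq}_{L^1(I)}\le2\,\mathcal D(h|h^\eq)=\smallO(t^{-1/2})$ directly, via $|a-b|=\left|\sqrt a-\sqrt b\right|\left(\sqrt a+\sqrt b\right)$ and Cauchy--Schwarz rather than CKP. The CKP inequality enters only at the very end, to show that $\norm{h^\eq-h^\infty}_{L^1(I)}$ decays exponentially via $\mathcal H(h^\infty|h^\eq)\le\eta e^{-\tau t}$ --- a step you dispatch as ``exponentially small'' but which itself requires a nontrivial estimate of the difference of normalization constants $C^\infty-C(t)$. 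If you replace your conjectural functional inequality by this Hellinger mechanism, the rest of your proposal goes through.
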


    \begin{proof}
        Let us denote with
        \begin{equation*}
        \begin{split}
            Q_p(h)(t,w) &= \frac{\sigma_p^2}{2} \partial^2_w\Big((1-w^2)h(t,w)\Big) +\lambda_p   (\omega_p + \varepsilon)^2 \partial_w\Big((w-m_w(t)) h(t,w)\Big), \\[2mm] 
            Q_\ell(h)(t,w) &= \frac{\sigma_\ell^2}{2} \partial^2_w\Big((1-w^2)h(t,w)\Big) +\lambda_\ell   (\omega_\ell + \varepsilon) \partial_w\Big((w-\mu_\ell) h(t,w)\Big),
        \end{split}
        \end{equation*}
        the Fokker--Planck operators appearing on the right-hand side of \eqref{eq100}.
        
        Following \cite{20}, the main idea of the proof is to show that $h(t,w)$ relaxes toward a local equilibrium state $h^\eq(t,w)$ of \eqref{eq100}, i.e. a distribution function canceling out the Fokker--Planck flux $Q_p(h)+Q_\ell(h)$, which has the form
        \begin{equation} \label{eq:local equilibrium}
            h^\eq(t,w) = C(t) (1-w)^{-1+\frac{\lambda_p(\omega_p+\varepsilon)^2}{\sigma_p^2 + \sigma_\ell^2} (1-m_w(t))+\frac{\lambda_\ell(\omega_\ell+\varepsilon)}{\sigma_p^2 + \sigma_\ell^2} (1-\mu_\ell)} (1+w)^{-1+\frac{\lambda_p(\omega_p+\varepsilon)^2}{\sigma_p^2 + \sigma_\ell^2} (1+m_w(t))+\frac{\lambda_\ell(\omega_\ell+\varepsilon)}{\sigma_p^2 + \sigma_\ell^2} (1+\mu_\ell)},
        \end{equation}
    with normalization constant explicitly given by $C(t) = 1/B(b_1(t),b_2(t))$, where $B$ is again the Beta function
    \begin{equation*}
        B(b_1(t),b_2(t)) = \int_I (1-w)^{-1+b_1(t)} (1+w)^{-1+b_2(t)} \dd w,
    \end{equation*}
    of time-dependent parameters $b_1(t)$ and $b_2(t)$ given by
    \begin{equation*}
    \begin{split}
        b_1(t) &= \frac{\lambda_p(\omega_p+\varepsilon)^2}{\sigma_p^2 + \sigma_\ell^2} (1-m_w(t))+\frac{\lambda_\ell(\omega_\ell+\varepsilon)}{\sigma_p^2 + \sigma_\ell^2} (1-\mu_\ell), \\[2mm]
        b_2(t) &= \frac{\lambda_p(\omega_p+\varepsilon)^2}{\sigma_p^2 + \sigma_\ell^2} (1+m_w(t))+\frac{\lambda_\ell(\omega_\ell+\varepsilon)}{\sigma_p^2 + \sigma_\ell^2} (1+\mu_\ell).
    \end{split}
    \end{equation*}
    The convergence will then follow easily, since Proposition \ref{prop6} ensures that $m_w(t) \underset{t \to +\infty}{\longrightarrow} \mu_\ell$ and thus the local equilibrium $h^\eq(t,w)$ itself relaxes toward $h^\infty(w)$ when $t \to +\infty$. In particular, the regularity of $h^\eq(t,w)$ can be inferred similarly to that of $h^\infty(w)$. As long as both exponents of the local equilibrium are nonnegative, then it belongs to any $L^q(I)$, $q \in [1,+\infty]$. Conversely, whenever $\displaystyle \inf_{t \in \R_+}\{-1+b_1(t), -1+b_2(t)\} < 0$, we deduce that $h^\eq \in L^\infty(\R_+,L^q(I))$ for any $1 \leq q < \bar{q}$, where
    \begin{equation} \label{eq:Lq local equilibrium}
        \bar{q} = \left( 1 - \frac{\lambda_p(\omega_p+\varepsilon)^2}{\sigma_p^2 + \sigma_\ell^2} \left(1-\sup_{t \in \R_+}|m_w(t)|\right) - \frac{\lambda_\ell(\omega_\ell+\varepsilon)}{\sigma_p^2 + \sigma_\ell^2} (1-|\mu_\ell|)  \right)^{-1}. 
    \end{equation}
    Notice that $\displaystyle \sup_{t \in \R_+} |m_w(t)| \leq  \max\{|m_w^\init|, |\mu_\ell|\}$, since $m_w(t)$ converges exponentially to $\mu_\ell$ thanks again to Proposition \ref{prop6}.
    
    To proceed, we thus consider the relative entropy between the distributions $h(t,w)$ and $h^\eq(t,w)$
    \begin{equation*}
        \mathcal{H}(h|h^\eq)(t) = \int_I h(t,w) \log \frac{h(t,w)}{h^\eq(t,w)} \dd w \geq 0,
    \end{equation*}
    and we evaluate its time derivative, recovering initially
    \begin{equation*}
        \begin{split}
            \frac{\dd}{\dd t} \mathcal{H}(h|h^\eq)(t) &= \int_I \left( 1 + \log \frac{h(t,w)}{h^\eq(t,w)} \right) \partial_t h(t,w) \dd w - \int_I h(t,w) \partial_t \log h^\eq(t,w) \dd w \\[2mm]
            &= \mathcal{T}_1(t) + \mathcal{T}_2(t).
        \end{split}
    \end{equation*}
    For the first integral, the computations from \cite{20} give
    \begin{equation} \label{eq:estimate on T1}
        \mathcal{T}_1(t) = \int_I \left( 1 + \log \frac{h(t,w)}{h^\eq(t,w)} \right) \Big( Q_p(h)(t,w) + Q_\ell(h)(t,w) \Big) \dd w = -\mathcal{I}_H(h|h^\eq)(t),
    \end{equation}
    where the last term denotes the entropy production 
    \begin{equation*}
        \mathcal{I}_H(h|h^\eq)(t) = 4 \int_I (1 - w^2) h^\eq(t,w) \left( \partial_w \sqrt{\frac{h(t,w)}{h^\eq(t,w)}} \right)^2 \dd w \geq 0
    \end{equation*}
    of $\mathcal{H}(h|h^\eq)(t)$. Next, we develop the derivative inside the integrand of $\mathcal{T}_2(t)$ obtaining
    \begin{equation*}
        \partial_t \log h^\eq(t,w) = \frac{C'(t)}{C(t)} + \frac{\lambda_p(\omega_p+\varepsilon)^2}{\sigma_p^2 + \sigma_\ell^2} \frac{\dd}{\dd t} m_w(t) \log \frac{1+w}{1-w},
    \end{equation*}
    with
    \begin{equation*}
        \frac{C'(t)}{C(t)} = -\frac{\lambda_p(\omega_p+\varepsilon)^2}{\sigma_p^2 + \sigma_\ell^2} C(t) \frac{\dd}{\dd t} m_w(t) \int_I \log \frac{1+w}{1-w} (1-w)^{-1+b_1(t)} (1+w)^{-1+b_2(t)} \dd w.
    \end{equation*}
    From \eqref{eq:evolution of mean}, we infer that
    \begin{equation*}
        \frac{\dd}{\dd t} m_w(t) = \frac{\dd}{\dd t} (m_w(t) - \mu_\ell) \leq \eta e^{-\tau t},
    \end{equation*}
    where $\eta, \tau > 0$ are explicitly given by $\eta = \lambda_\ell(\omega_\ell+\varepsilon)\left|m_w^\init - \mu_\ell\right|$ and $\tau=\lambda_\ell (\omega_\ell + \eps)$. Therefore, by suitably updating the value of $\eta$ to include the other constants, we deduce that $C'(t) / C(t) \leq \eta e^{-\tau t}$ and then
    \begin{equation*}
        \mathcal{T}_2(t) \leq \eta \left( 1 + \int_I |\log(1-w)| h(t,w) \dd w + \int_I |\log(1+w)| h(t,w) \dd w \right) e^{-\tau t},
    \end{equation*}
     given that $\int_I h(t,w) \dd w = 1$. By assumption, there exists $q > 1$ for which $h \in L^\infty(\R_+, L^q(I))$. Using that the functions $|\log(1-w)|$ and $|\log(1+w)|$ belong to $L^{q'}(I)$ for any $q' \in [1,+\infty)$, we apply H\"older's inequality with conjugate exponents $q$ and $q'=\frac{q}{q-1}$, and infer the upper bounds
    \begin{equation*}
        \begin{split}
            \int_I \log (1-w) h(t,w) \dd w &\leq \norm{\log(1-w)}_{L^{q'}(I)} \norm{h}_{L^\infty(\R_+, L^q(I))}, \\[2mm]
            \int_I \log (1+w) h(t,w) \dd w &\leq \norm{\log(1+w)}_{L^{q'}(I)} \norm{h}_{L^\infty(\R_+, L^q(I))},
        \end{split}
    \end{equation*}
    which allow to finally control $\mathcal{T}_2(t)$ as
    \begin{equation} \label{eq:estimate on T2}
        \mathcal{T}_2(t) \leq \eta \left( 1 + \norm{\log(1+w)}_{L^{q'}(I)} \norm{h}_{L^\infty(\R_+, L^q(I))} \right) e^{-\tau t}.
    \end{equation}
    Note that obviously $\norm{\log(1-w)}_{L^{q'}(I)} = \norm{\log(1+w)}_{L^{q'}(I)}$ and we have once again renamed $\eta$ to account for all successive upper bounds of the constants. Combining estimates \eqref{eq:estimate on T1} and \eqref{eq:estimate on T2}, we finally deduce that
    \begin{equation} \label{eq:evolution of H}
        \frac{\dd}{\dd t} \mathcal{H}(h|h^\eq)(t) \leq -\mathcal{I}_H(h|h^\eq)(t) + \eta \left( 1 + \norm{\log(1+w)}_{L^{q'}(I)} \norm{h}_{L^\infty(\R_+, L^q(I))} \right) e^{-\tau t}.
    \end{equation}

    We proceed by introducing the Hellinger distance
    \begin{equation*}
        \mathcal{D}(h|h^\eq)(t)  = \left( \int_I \left( \sqrt{h(t,w)} - \sqrt{h^\eq(t,w)} \right)^2 \dd w \right)^{\frac{1}{2}},
    \end{equation*}
    between $h(t,w)$ and $h^\eq(t,w)$, for which holds the fundamental relation
    \begin{equation} \label{eq:relation between D and H}
        \mathcal{D}^2(h|h^\eq)(t) \leq \frac{1}{2} \mathcal{I}_H(h|h^\eq)(t),
    \end{equation}
    proven in \cite{FurPulTerTos}. Moreover, its temporal evolution reads
    \begin{equation*}
        \begin{split}
            \frac{\dd}{\dd t} \mathcal{D}^2(h|h^\eq)(t) &= \int_I \left( 1 - \sqrt{\frac{h^\eq(t,w)}{h(t,w)}} \right) \partial_t h(t,w) \dd w + \int_I \left( 1 - \sqrt{\frac{h(t,w)}{h^\eq(t,w)}} \right) \partial_t h^\eq(t,w) \dd w \\[2mm]
            &= \mathcal{T}_3(t) + \mathcal{T}_4(t).
        \end{split}
    \end{equation*}
    Now, the first term is linked to the entropy production
    \begin{equation*}
        \mathcal{I}_D(h|h^\eq) = 8 \int_I (1 - w^2) h^\eq(t,w) \left( \partial_w \sqrt[4]{\frac{h(t,w)}{h^\eq(t,w)}} \right)^2 \dd w \geq 0,
    \end{equation*}
    of the Hellinger distance, through the identity \cite{FurPulTerTos}
    \begin{equation} \label{eq:estimate on T3}
        \mathcal{T}_3(t) = \int_I \left( 1 - \sqrt{\frac{h^\eq(t,w)}{h(t,w)}} \right) \Big( Q_p(h)(t,w) + Q_\ell(h)(t,w) \Big) \dd w = -\mathcal{I}_D(h|h^\eq)(t).
    \end{equation}
    Moreover, $\mathcal{T}_4(t)$ can be recast as
    \begin{equation*}
        \begin{split}       
        \mathcal{T}_4(t) & = \int_I \left(h^\eq(t,w) - \sqrt{h(t,w) h^\eq(t,w)}\right) \partial_t \log h^\eq(t,w) \dd w \\ 
        & = \int_I h^\eq(t,w)   \partial_t \log h^\eq(t,w) \dd w  - \int_I \sqrt{h(t,w) h^\eq(t,w)}    \partial_t \log h^\eq(t,w) \dd w,
        \end{split}
    \end{equation*}
    from which we initially deduce, using the previous computations on $\partial_t \log h^\eq(t,w)$ to deal with the first integral and the Cauchy--Schwarz inequality to deal with the second one, that
    \begin{equation*}
        \begin{split}
            \mathcal{T}_4(t) &\leq \eta e^{-\tau t} \left( 1 + \int_I |\log(1-w)| h^\eq(t,w) \dd w + \int_I |\log(1+w)| h^\eq(t,w) \dd w \right) \\[2mm]
            & \hspace*{2cm} + \underbrace{\left( \int_I h(t,w) \dd w \right)^{\frac{1}{2}}}_{=1} \left( \int_I h^\eq(t,w) \left(\partial_t \log h^\eq(t,w) \right)^2 \dd w \right)^{\frac{1}{2}}.
        \end{split}
    \end{equation*}
    Recalling then the minimal regularity of the local equilibrium, we fix $1 < q < \bar{q}$ from \eqref{eq:Lq local equilibrium} such that $h^\eq \in L^\infty(\R_+,L^q(I))$, and we apply H\"older's inequality with exponents $q$ and $q' = \frac{q}{q-1}$ to control the functions $|\log(1-w)|$, $\log^2(1-w)$, $|\log(1+w)|$, and $\log^2(1+w)$, which obviously all belong to any $L^{q'}(I)$, $q' \geq 1$. Simple computations allow to recover the estimate
    \begin{equation} \label{eq:estimate on T4}
        \mathcal{T}_4(t) \leq \eta \left( 1 + \norm{\log(1+w)}_{L^{q'}(I)} \norm{h^\eq}_{L^\infty(\R_+, L^q(I))} + \norm{\log^2(1+w)}_{L^{q'}(I)}^{1/2} \norm{h^\eq}_{L^\infty(\R_+, L^q(I))}^{1/2} \right) e^{-\tau t},
    \end{equation}
    where the value of $\eta$ has been suitably redefined to include all dependencies on the upper bounding constants. The combination of inequalities \eqref{eq:estimate on T3} and \eqref{eq:estimate on T4} finally yields the estimate
    \begin{equation} \label{eq:evolution of D}
        \frac{\dd}{\dd t} \mathcal{D}^2(h|h^\eq)(t) + \frac{\dd}{\dd t} \left( \frac{\tilde{\eta}}{\tau} e^{-\tau t} \right) \leq -\mathcal{I}_D(h|h^\eq)(t),
    \end{equation}
    where we have conveniently denoted
    \begin{equation*}
        \tilde{\eta} = \eta \left( 1 + \norm{\log(1+w)}_{L^{q'}(I)} \norm{h^\eq}_{L^\infty(\R_+, L^q(I))} + \norm{\log^2(1+w)}_{L^{q'}(I)}^{1/2} \norm{h^\eq}_{L^\infty(\R_+, L^q(I))}^{1/2} \right).
    \end{equation*}

    Now, integrating \eqref{eq:evolution of H} over $t \in \R_+$ and using that $\mathcal{H}(h|h^\eq)(t)$ is nonnegative for any $t \in \R_+$, we infer the $L^1(\R_+)$ integrability of $\mathcal{I}_H(h|h^\eq)(t)$ since
    \begin{equation} \label{eq:integrability of I}
        \int_{\R_+} \mathcal{I}_H(h|h^\eq)(s)   \dd s \leq \zeta \big(1 + \mathcal{H}(h|h^\eq)(0) \big)
    \end{equation}
    for some $\zeta > 0$, from which also follows $\mathcal{D}^2(h|h^\eq) \in L^1(\R_+)$ thanks to relation \eqref{eq:relation between D and H}. Moreover, inequality \eqref{eq:evolution of D} tells us that the quantity $\mathcal{D}^2(h|h^\eq)(t) + \frac{\tilde{\eta}}{\tau} e^{-\tau t}$ is decreasing in time and, since it belongs to $L^1(\R_+)$, the latter must decay at infinity at least like $\smallO\left(t^{-1}\right)$. These considerations finally lead to the decay estimate
    \begin{equation*}
        \norm{h-h^\eq}_{L^1(I)} \leq 2 \mathcal{D}(h|h^\eq)(t) = \smallO\left(t^{-1/2}\right), \quad t \to +\infty,
    \end{equation*}
    where the inequality linking the $L^1$ norm with the Hellinger distance is obtained using the simple relation $|a - b| = \left|\sqrt{a} - \sqrt{b}\right|\left(\sqrt{a} + \sqrt{b}\right)$ and two successive applications of Cauchy--Schwarz inequality. We deduce the relaxation of $h(t,w)$ toward the local equilibrium $h^\eq(t,w)$.

    To conclude, it is enough to prove that $h^\eq(t,w)$ itself approaches the global equilibrium $h^\infty(w)$ when $t \to +\infty$. This follows for example from an investigation of their relative entropy $\mathcal{H}(h^\infty|h^\eq)(t)$. We begin by estimating the latter as
    \begin{equation} \label{eq:relative entropy between equilibria}
        \begin{split}
            \mathcal{H}&(h^\infty|h^\eq)(t) = \int_I h^\infty(w) \log \frac{h^\infty(w)}{h^\eq(t,w)} \dd w \\[2mm]
            &= \log \frac{C^\infty}{C(t)} \underbrace{\int_I h^\infty(w) \dd w}_{=1} + (b_1^\infty-b_1(t)) \int_I \log(1-w) h^\infty(w) \dd w + (b_2^\infty-b_2(t)) \int_I \log(1+w) h^\infty(w) \dd w \\[2mm]
            & \leq \frac{C^\infty-C(t)}{C(t)} + \eta \norm{\log(1+w)}_{L^{q'}(I)} \norm{h^\infty}_{L^q(I)} e^{-\tau t},
        \end{split}
    \end{equation}
    applying the bounds $\log x \leq x-1$ for any $x > 0$, $b_i^\infty-b_i(t) \leq \frac{\lambda_p(\omega_p+\varepsilon)^2}{\sigma_p^2 + \sigma_\ell^2} |m_w(t) - \mu_\ell|$ for $i=1,2$, and H\"older's inequality as before to bound the last two integrals. It thus only remains to control the difference $C^\infty-C(t)$. Using the algebraic relation $ab-cd = \frac{1}{2}(a-c)(b+d) + \frac{1}{2}(a+c)(b-d)$, we find
    \begin{equation} \label{eq:difference of constants}
        \begin{split}
            C^\infty-C(t) &= C^\infty C(t) \Big(B(b_1(t),b_2(t)) - B(b_1^\infty,b_2^\infty) \Big) \\[4mm]
            &= \frac{C^\infty C(t)}{2} \int_I \left( (1-w)^{-1+b_1(t)} - (1-w)^{-1+b_1^\infty} \right)\\
            & \qquad\qquad\qquad\qquad \times \left( (1+w)^{-1+b_2(t)} + (1+w)^{-1+b_2^\infty} \right) \dd w \\[4mm]
            &\qquad + \frac{C^\infty C(t)}{2} \int_I \left( (1-w)^{-1+b_1(t)} + (1-w)^{-1+b_1^\infty} \right) \left( (1+w)^{-1+b_2(t)} - (1+w)^{-1+b_2^\infty} \right) \dd w \\[4mm]
            &= \frac{C^\infty C(t)}{2} \int_I (1-w)^{-1+b_1^\infty} \left( (1-w)^{b_1(t)-b_1^\infty} - 1 \right) \left( (1+w)^{-1+b_2(t)} + (1+w)^{-1+b_2^\infty} \right) \dd w \\[4mm]
            &\qquad + \frac{C^\infty C(t)}{2} \int_I \left( (1-w)^{-1+b_1(t)} + (1-w)^{-1+b_1^\infty} \right) (1+w)^{-1+b_2^\infty} \left( (1+w)^{b_2(t)-b_2^\infty} - 1 \right) \dd w,
        \end{split}
    \end{equation}
    and we expect the differences $(1-w)^{b_1(t)-b_1^\infty} - 1$ and $(1+w)^{b_2(t)-b_2^\infty} - 1$ to vanish exponentially in the limit $t \to +\infty$. We prove this fact for the first difference (the second will follow similarly), by rewriting it as
    \begin{equation*}
        \begin{split}
            (1-w)^{b_1(t)-b_1^\infty} - 1 &= (b_1(t)-b_1^\infty) \log(1-w) \int_0^1 (1-w)^{s(b_1(t)-b_1^\infty)} \dd s \\[2mm]
            &= \xi e^{-\tau t} \log(1-w) \int_0^1 (1-w)^{s \xi e^{-\tau t}} \dd s,
        \end{split}
    \end{equation*}
    where $\xi = -\frac{\lambda_p(\omega_p+\varepsilon)^2}{\sigma_p^2 + \sigma_\ell^2} \left(m_w^\init-\mu_\ell\right)$ and $\tau > 0$ are two constants, and the sign of $\xi$ is determined by that of the initial condition $m_w^\init-\mu_\ell$. The delicate case occurs when $\xi < 0$, since one needs to control an integral term of the form $\int_I \int_0^1 (1-w)^{-s |\xi| e^{-\tau t}} \dd s   \dd w$, which could be unbounded depending on the parameters of the problem (notice that the integration in $s$ does not cause any additional problem). However, the presence of the exponential decay $e^{-\tau t}$ ensures that this issue is prevented for $t$ large enough. In particular, for any given $q' > 1$ one can find a threshold time $T > 0$ such that the function $g(w) = \log(1-w) (1-w)^{s \xi e^{-\tau t}}$ belongs to $L^{q'}(I)$ for any $s \in [0,1]$ and $t \geq T$. Let us recall that there exists $q > 1$ such that $h^\eq \in L^\infty(\R_+,L^q(I))$ and $h^\infty \in L^q(I)$. Then obviously all factors appearing in the integrands of $C(t)$ and $C^\infty$ share the same $L^q(I)$ regularity. Thus, we can apply H\"older's inequality with conjugates $q$ and $q' = \frac{q}{q-1}$ to each of the integral terms of \eqref{eq:difference of constants}, in this way
    \begin{equation*}
        \begin{split}
            \int_I (1-w)^{-1+b_1^\infty} & \left( (1-w)^{b_1(t)-b_1^\infty} - 1 \right) (1+w)^{-1+b_2(t)} \\[2mm]
            &\leq |\xi| e^{-\tau t} \int_I \left| (1-w)^{-1+b_1^\infty} (1+w)^{-1+b_2(t)} \right| \left| \log(1-w) \int_0^1 (1-w)^{s \xi e^{-\tau t}} \dd s \right| \dd w \\[2mm]
            &\leq |\xi| \norm{(1-w)^{-1+b_1^\infty} (1+w)^{-1+b_2(t)}}_{L^q(I)} \norm{\log(1-w) \int_0^1 (1-w)^{s \xi e^{-\tau t}} \dd s}_{L^{q'}(I)} e^{-\tau t},
        \end{split}
    \end{equation*}
    for any $s \in [0,1]$ and $t \geq T$ chosen to guarantee the $L^{q'}(I)$ integrability of the quantity $\log(1-w) \int_0^1 (1-w)^{s \xi e^{-\tau t}} \dd s$. The remaining terms are treated in the same way.

    Therefore, going back to inequality \eqref{eq:relative entropy between equilibria}, we can infer the bound
    \begin{equation*}
        \mathcal{H}(h^\infty|h^\eq)(t) \leq \eta e^{-\tau t}, \quad t \geq T,
    \end{equation*}
    for a suitable constant $\eta > 0$ that can be determined combining the previous estimates. Thanks to the Csiszár--Kullback--Pinsker inequality, we thus deduce that $h^\eq(t,w)$ converges toward $h^\infty$ as
    \begin{equation*}
        \norm{h^\eq-h^\infty}_{L^1(I)}^2 \leq 2 \mathcal{H}(h^\infty|h^\eq)(t) \leq \eta e^{-\tau t}, \quad t \to +\infty,
    \end{equation*}
    which concludes the proof of our result, since
    \begin{equation*}
        \norm{h-h^\infty}_{L^1(I)} \leq \norm{h-h^\eq}_{L^1(I)} + \norm{h^\eq-h^\infty}_{L^1(I)} = \smallO\left(t^{-1/2}\right) + \smallO\left(e^{-\tau t}\right) = \smallO\left(t^{-1/2}\right),
    \end{equation*}
    when $t \to +\infty$.
    \end{proof}

    Theorem \ref{teo:conv} requires that $h \in L^\infty(\R_+, L^q(I))$ for some $q > 1$, in order to exploit H\"older's inequality. As previously observed, the limit steady state $h^\infty(w)$ (which is known \textit{a priori} as soon as the main parameters of the problem are fixed) belongs to $L^q(I)$ for some $q > 1$, hence one expects that the solution $h(t,w)$ to equation \eqref{eq100} shares the same regularity of $h^\infty(w)$, as long as one assumes it for the initial datum. Unfortunately, this is not obvious to demonstrate. The following lemma ensures however that a weaker version of this fact is true, in the particular case of consensus formation, where $h^\infty \in L^\infty(I)$. As we will show, this property will be enough to prove the convergence of $h(t,w)$ toward $h^\infty(w)$, provided that we impose some additional reasonable constraints on the parameters of the problem.   

    \begin{lemma} \label{lemma:norm}
        Let $q \in [2, \infty)$ and consider a solution $h(t,w)$ of Eq. \eqref{eq100}, completed by the no-flux boundary conditions \eqref{eq:main BC}. Assume that following additional no-flux boundary conditions hold for any $t>0$:
        \begin{equation} \label{eq:additional BC}
            \begin{cases}
                h(t,w)\Big|_{w=\pm 1} = 0,  \\[4mm] 
                (1-w^2) h^{q-1}(t,w) \partial_w h(t,w) \Big|_{w=\pm 1} = 0. 
            \end{cases}
        \end{equation}
        If initially $h^\init \in L^q(I)$, then $h(t,\cdot) \in L^q(I)$ for any $t>0$. More precisely,
        \begin{equation} \label{norm_estimate}
            \norm{h(t, \cdot)}_{L^q(I)} \leq \eta_q e^{\tau_q t} \quad \forall t > 0,
        \end{equation}
        where $\eta_q = \norm{h^{\init}}_{L^q(I)}$ and $\tau_q = \frac{(q-1)\left( \sigma_p^2 + \sigma_\ell^2 \right)}{q} \left(\frac{\lambda_p (\omega_p + \eps)^2 + \lambda_\ell (\omega_\ell + \eps)}{\sigma_p^2 + \sigma_\ell^2} - 1\right)$.
    \end{lemma}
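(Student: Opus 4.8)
The plan is to run the classical $L^q$-energy estimate for the drift--diffusion equation \eqref{eq100}. First I would recast it as
\[
\partial_t h = \frac{\sigma_p^2+\sigma_\ell^2}{2}\,\partial_w^2\big((1-w^2)h\big) + \partial_w\big(a(t,w)\,h\big), \qquad a(t,w) = \lambda_p(\omega_p+\eps)^2\big(w-m_w(t)\big) + \lambda_\ell(\omega_\ell+\eps)\big(w-\mu_\ell\big),
\]
the crucial structural feature being that $a$ has the \emph{constant} $w$-derivative $\partial_w a \equiv \beta := \lambda_p(\omega_p+\eps)^2 + \lambda_\ell(\omega_\ell+\eps)$. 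Multiplying by $q\,h^{q-1}$ and integrating over $I$, I would then estimate $\frac{\dd}{\dd t}\int_I h^q\,\dd w = q\int_I h^{q-1}\partial_t h\,\dd w$ term by term, the hypothesis $q \geq 2$ ensuring that all powers of $h$ appearing below are nonnegative so the manipulations are legitimate.

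For the drift contribution $q\int_I h^{q-1}\partial_w(a h)\,\dd w$, two successive integrations by parts, in which the boundary terms $\big[a h^q\big]_{-1}^{1}$ vanish by the first condition of \eqref{eq:additional BC}, produce exactly $(q-1)\beta\int_I h^q\,\dd w$; no extra terms survive precisely because $\partial_w a$ is constant. For the diffusive contribution, writing $2D = \sigma_p^2+\sigma_\ell^2$, one integration by parts yields the boundary term $\big[h^{q-1}\partial_w((1-w^2)h)\big]_{-1}^1 = \big[-2wh^q + (1-w^2)h^{q-1}\partial_w h\big]_{-1}^1$, which is annihilated by the two conditions in \eqref{eq:additional BC}, and the bulk term splits as
\[
-qD(q-1)\int_I (1-w^2)\,h^{q-2}(\partial_w h)^2\,\dd w \;+\; 2qD(q-1)\int_I w\,h^{q-1}\partial_w h\,\dd w.
\]
The first integral is manifestly nonpositive and I would simply discard it; the second, after one more integration by parts (again the boundary term $[wh^q]_{-1}^1$ vanishes by $h(t,\pm1)=0$), equals $-2D(q-1)\int_I h^q\,\dd w$. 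Summing everything gives the differential inequality
\[
\frac{\dd}{\dd t}\int_I h^q\,\dd w \;\leq\; (q-1)\big(\beta - (\sigma_p^2+\sigma_\ell^2)\big)\int_I h^q\,\dd w \;=\; q\,\tau_q\int_I h^q\,\dd w,
\]
so that Gr\"onwall's lemma followed by extraction of the $q$-th root yields \eqref{norm_estimate} with $\eta_q = \norm{h^\init}_{L^q(I)}$.

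The step I expect to be the genuine obstacle is the rigorous justification of this chain of identities, since \textit{a priori} one does not know that $\norm{h(t,\cdot)}_{L^q(I)}$ is finite --- which is exactly the conclusion --- so the quantity being differentiated might not even be well defined. I would circumvent this by a spatial truncation: set $N_\delta(t) = \int_{-1+\delta}^{1-\delta} h(t,w)^q\,\dd w$, which is finite and $C^1$ in $t$ because $h \in C^\infty(\R_+\times(-1,1))$ by the Wright--Fisher regularity recalled just before the statement, carry out the integrations by parts on $[-1+\delta,1-\delta]$ (where there is no boundary obstruction), and obtain $N_\delta'(t) \leq q\tau_q N_\delta(t) + R_\delta(t)$, with $R_\delta(t)$ collecting the terms evaluated at $w = \pm(1-\delta)$. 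The crux is then to show $R_\delta(t)\to 0$ as $\delta\to0$, locally uniformly in $t$: here one uses the boundary conditions \eqref{eq:main BC}--\eqref{eq:additional BC} together with the fact that near $w=\pm1$ the solution $h$ behaves like the Beta-type (local) equilibrium, whose exponents, in the consensus regime $h^\infty \in L^\infty(I)$ that we are considering, force $h$ and the relevant flux combinations to vanish at the endpoints. Once $R_\delta\to0$ is secured, Gr\"onwall applied to $N_\delta$ gives $N_\delta(t)\leq \norm{h^\init}_{L^q(I)}^q e^{q\tau_q t} + o_\delta(1)$, and letting $\delta\to0$ with the monotone convergence $N_\delta(t)\nearrow\norm{h(t,\cdot)}_{L^q(I)}^q$ completes the argument.
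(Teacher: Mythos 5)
Your proposal is correct and takes essentially the same route as the paper: the same $L^q$ energy estimate, the same integrations by parts using \eqref{eq:additional BC} and $h(t,\pm1)=0$, discarding the same nonpositive term $-(q-1)\frac{\sigma_p^2+\sigma_\ell^2}{2}\int_I(1-w^2)h^{q-2}(\partial_w h)^2\,\dd w$, and Gr\"onwall; your direct expansion $\partial_w(a h)=a\,\partial_w h+\beta h$ followed by one integration by parts is algebraically identical to the paper's splitting $\mathcal{T}_2=\tfrac{1}{q}\mathcal{T}_2+\tfrac{q-1}{q}\mathcal{T}_2$. The truncation argument you sketch to justify differentiating the a priori possibly infinite $L^q$ norm is an extra layer of rigor that the paper does not attempt (it computes formally under the stated boundary conditions), and while your claim that $R_\delta\to 0$ is not fully detailed, it is not required to match the paper's level of proof.
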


    \begin{proof}
        Adapting the computations from \cite[Proposition 2]{2}, we successively write 
        \begin{equation*}
        \begin{split}
            \int_I & h^{q-1}(t,w) \partial_t h(t,w)   \dd w \\[2mm]
            &= \int_I h^{q-1}(t,w) \Bigg( \frac{\sigma_p^2 + \sigma_\ell^2}{2} \partial^2_w \Big((1-w^2) h(t,w)\Big) \\
            & \hspace*{3cm} + \partial_w \Big( \lambda_p(\omega_p + \eps)^2 (w-m_w(t)) h(t,w)\Big) + \partial_w \Big(\lambda_\ell (\omega_\ell + \eps) (w-\mu_\ell) h(t,w) \Big) \Bigg) \dd w \\[2mm] 
            &= \frac{\sigma_p^2 + \sigma_\ell^2}{2} \int_I h^{q-1}(t,w) \partial_w \Big((1-w^2) \partial_w h(t,w)\Big)   \dd w \\[2mm] 
            & \qquad + \int_I h^{q-1}(t,w) \partial_w \Big(\Big( \big(\lambda_p (\omega_p+\eps)^2 + \lambda_\ell (\omega_\ell + \eps) - \sigma_p^2 - \sigma_\ell^2\big) w - (\lambda_p m_w(t) + \lambda_\ell \mu_\ell)\Big) h(t,w)\Big) \dd w \\[4mm] 
            &= \mathcal{T}_1 + \mathcal{T}_2,
        \end{split}
        \end{equation*}
        by performing one derivation in the diffusion term. Integrating by parts $\mathcal{T}_1$ and using the second of the boundary conditions \eqref{eq:additional BC}, we get 
        \begin{equation*}
            \mathcal{T}_1 = - \frac{\sigma_p^2 + \sigma_\ell^2}{2} (q-1) \int_I (1-w^2) h^{q-2}(t,w) \big( \partial_w h(t,w) \big)^2   \dd w \le 0, 
        \end{equation*}
        since $h(t,w)$ is nonnegative. Concerning $\mathcal{T}_2$, we manipulate it in two different ways. We can perform an integration by parts using the first of the boundary conditions \eqref{eq:additional BC}, to get 
        \begin{equation*}
            \mathcal{T}_2 = -(q-1) \int_I \Big(\left(\lambda_p (\omega_p+\eps)^2 + \lambda_\ell (\omega_\ell + \eps) - \sigma_p^2 - \sigma_\ell^2\right)w - (\lambda_p m_w(t) + \lambda_\ell \mu_\ell)\Big) h^{q-1}(t,w) \partial_w h(t,w)   \dd w.
        \end{equation*}
        Alternatively, we can compute the derivative with respect to $w$, to obtain 
        \begin{equation*}
        \begin{split}
            \mathcal{T}_2 &= \big(\lambda_p (\omega_p + \eps)^2 + \lambda_\ell (\omega_\ell + \eps) - \sigma_p^2 - \sigma_\ell^2\big) \int_I h^q (t,w)   \dd w \\[2mm] 
            & + \int_I \Big( \big(\lambda_p (\omega_p+\eps)^2 + \lambda_\ell (\omega_\ell + \eps) - \sigma_p^2 - \sigma_\ell^2\big)w - (\lambda_p m_w(t) + \lambda_\ell \mu_\ell)\Big) h^{q-1}(t,w) \partial_w h(t,w)   \dd w. 
        \end{split}
        \end{equation*}
        By splitting $\mathcal{T}_2 = \frac{1}{q} \mathcal{T}_2 + \frac{q-1}{q} \mathcal{T}_2$ and applying the first computation to $\frac{1}{q} \mathcal{T}_2$ and the second to $\frac{q-1}{q} \mathcal{T}_2$, we thus end up with the estimate 
        \begin{equation*}
            \int_I h^{q-1}(t,w) \partial_t h(t,w)   \dd w \leq \frac{q-1}{q} \left(\lambda_p (\omega_p + \eps)^2 + \lambda_\ell (\omega_\ell + \eps) - \sigma_p^2 - \sigma_\ell^2\right) \norm{h(t, \cdot)}^q_{L^q(I)},  
        \end{equation*}
        implying that 
        \begin{equation*}
            \frac{\dd}{\dd t}\norm{h(t, \cdot)}_{L^q(I)}^q \leq (q-1)\left( \sigma_p^2 + \sigma_\ell^2 \right) \left(\frac{\lambda_p (\omega_p + \eps)^2 + \lambda_\ell (\omega_\ell + \eps)}{\sigma_p^2 + \sigma_\ell^2} - 1\right) \norm{h(t, \cdot)}_{L^q(I)}^q,
        \end{equation*}
        from which \eqref{norm_estimate} follows using Gr\"onwall's lemma. 
    \end{proof}

    \begin{remark}
        The only global equilibria \eqref{h_inf} compatible with the boundary conditions \eqref{eq:additional BC} are the Beta distributions $h^\infty(w)$ having both exponents nonnegative (consensus formation inside the population). This implies in particular that the constants $\tau_q$ are in fact nonnegative. Moreover, they are also equi-bounded by $\lambda_p (\omega_p + \eps)^2 + \lambda_\ell (\omega_\ell + \eps) - \sigma_p^2 - \sigma_\ell^2$, for any $q \in [2,+\infty)$. Therefore, in the limit $q \to +\infty$ we infer the regularity $h(t,\cdot) \in L^\infty(I)$ for any $t > 0$, specifically
        \begin{equation*}
            \norm{h(t, \cdot)}_{L^\infty(I)} \leq \eta_\infty e^{\tau_\infty t} \quad \forall t > 0,
        \end{equation*}
        with $\eta_\infty = \norm{h^\init}_{L^\infty(I)}$ and $\tau_\infty = \left( \sigma_p^2 + \sigma_\ell^2 \right) \left(\frac{\lambda_p (\omega_p + \eps)^2 + \lambda_\ell (\omega_\ell + \eps)}{\sigma_p^2 + \sigma_\ell^2} - 1\right)$, provided that $h^\init \in L^\infty(I)$.
    \end{remark}
    
    We highlight that Lemma \ref{lemma:norm} does not imply the uniform-in-time regularity $h \in L^\infty (\R_+, L^q(I))$ for $q \geq 2$, assumed in Theorem \ref{teo:conv}. Nonetheless, it is still possible to prove that $h(t,w)$ converges toward $h^\infty(w)$ as long as 
    \begin{equation} \label{eq:tau}
        0 \leq \frac{q-1}{q}\left(\lambda_p (\omega_p + \eps)^2 + \lambda_\ell (\omega_\ell + \eps) - \sigma_p^2 - \sigma_\ell^2\right) = \tau_q < \tau = \lambda_\ell (\omega_\ell + \eps), 
    \end{equation}
    by adapting the proof of Theorem \ref{teo:conv}, at the instances where the quantity $\norm{h}_{L^\infty(\R_+,L^q(I))}$ is used to estimate the time derivative of the relative entropy $\mathcal{H}(h|h^\eq)(t)$. This occurs when deriving the bound \eqref{eq:estimate on T2}, and the assumption $\norm{h}_{L^\infty(\R_+,L^q(I))} < +\infty$ is there to ensure the $L^1(\R_+)$ integrability of the term $\mathcal{T}_2(t)$ from which follows inequality \eqref{eq:integrability of I}. However, the same integrability could be recovered as soon as $\norm{h(t,\cdot)}_{L^q(I)}$ satisfies the weaker control \eqref{norm_estimate}, with $\tau_q$ from \eqref{eq:tau}. In particular, inequality \eqref{eq:estimate on T2} would need to be changed into
    \begin{equation*}
        \mathcal{T}_2(t) \leq \eta \left( 1 + \norm{\log(1+w)}_{L^{q'}(I)} \norm{h^\init}_{L^q(I)} \right) e^{-(\tau-\tau_q) t},
    \end{equation*}
    and the rest of the proof would follow easily.
    
    \begin{remark}
        We observe from condition \eqref{eq:tau} that by choosing $q=2$, we can impose the least restrictive constraints on the main parameters of our model, in order to recover the convergence of $h(t,w)$ toward the global equilibrium $h^\infty(w)$ through Theorem \ref{teo:conv}. Indeed, for this particular choice condition \eqref{eq:tau} reads
        \begin{equation*}
             - \lambda_\ell (\omega_\ell + \eps) + \sigma_\ell^2 \leq \lambda_p (\omega_p + \eps)^2 - \sigma_p^2 < \lambda_\ell (\omega_\ell + \eps) + \sigma_\ell^2, 
        \end{equation*}
        and is satisfied, for example, as soon as the leaders' influence is stronger than the individuals' one in driving the consensus dynamics.
    \end{remark} 

\subsection{Additional analytical properties of the full model} \label{evol3}

    We conclude the analysis of our models with a focus on the properties of the whole distribution $f(t,A,w)$. Even if considering the controlled model and assuming $G \equiv 1$ and $D(w)=\sqrt{1-w^2}$, its marginal distributions $g(t,A)$ and $h(t,w)$ converge, the convergence of $f(t,A,w)$ is not guaranteed.  However, if such convergence result holds and $f(t,A,w) \underset{t \to +\infty}{\longrightarrow} f^\infty(A,w)$, then the marginal distributions of $f^\infty(A,w)$ must be the distributions $g^\infty(A)$ and $h^\infty(w)$ previously introduced. In particular, in the absence of opinion leaders, we expect $f(t,A,w)$ to converge in the sense of distributions toward the global equilibrium of the Fokker--Planck equation \eqref{eq53}, given by 
    \begin{equation*} \label{inf}
        f^\infty (A,w) = C  \delta(A-A_c^*)   (1-w)^{-1+\frac{(\omega_p+\varepsilon)^2}{\nu_p}(1-m_w^\init)}(1+w)^{-1+\frac{(\omega_p+\varepsilon)^2}{\nu_p}(1+m_w^\init)},   
    \end{equation*}
    where $C > 0$ is the same normalization constant appearing in (\ref{h_inf}). Again, observe that the influence of the initial condition $f^\init(A,w)$ is only seen through the preserved initial average opinion $m_w^\init$, and not in the equilibrium distribution of the activity level, which is independent of the initial datum. An analogous equilibrium can be readily derived in the presence of opinion leaders, where the influence of the initial distribution is instead lost. Finally, we stress that for the uncontrolled model \eqref{eq29}, $f(t,A,w)$ cannot reach an equilibrium state, since its marginal $g(t,A)$ does not converge to an equilibrium (assuming that initially not all agents have an activity level equal to $A_p^*$). 

    Nonetheless, we can still recover the following fundamental analytical properties of nonnegativity and uniqueness for the distribution $f(t,A,w)$, without requiring that $G \equiv 1$ and $D(w)=\sqrt{1-w^2}$. We point out that whenever we are considering the controlled model in what follows, we will tacitly assume that the initial distribution has compact support (see Subsection \ref{sec:mean}). 

    \begin{proposition}\label{nonneg}
        Let $f(t,A,w)$ be a solution of the uncontrolled model (\ref{eq29}) or of the controlled one (\ref{eq53}) and consider an initial datum $f^\init \in L^1(\R\times I)$ such that $f^\init(A,w) \geq 0$ for a.e. $A \in \R$ and $w \in I$. Then, $f(t,A,w) \geq 0$ for a.e. $A \in \R$, $w \in I$, and for any $t > 0$. 
    \end{proposition}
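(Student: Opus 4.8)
The plan is to reduce the statement, for both models, to the one-dimensional setting handled in \cite{2}, and there to run a renormalization argument controlling the negative part of the solution. First I would freeze the nonlocal coefficient: given a solution $f$, the quantities $(t,w)\mapsto\mathcal K[f](t,w)$ and $w\mapsto\mathcal J[f_\ell](w)$ are fixed functions, bounded together with their $w$-derivatives (this uses only the mass conservation $\norm{f(t,\cdot)}_{L^1(\R\times I)}=1$, the bounds $0\le\bar A\omega_\bullet+\eps\le1$, and the Lipschitz regularity of $G$). Consequently $f$ solves the \emph{linear} degenerate Fokker--Planck equation
\begin{equation*}
    \partial_t f=\partial_w^2\big(a(w)f\big)+\partial_w\big(b(t,A,w)f\big)+\partial_A\big(c(A)f\big),
\end{equation*}
where $a(w)\ge0$ is proportional to $D^2(w)$ (hence vanishes at $w=\pm1$), $b$ and $\partial_wb$ are bounded, and $c(A)=-\lambda_A(\bar A\omega_p+\eps-a_p)$ in the uncontrolled case while $c(A)=-(1-\theta)\lambda_A(\bar A\omega_p+\eps-a_p)+\theta\tfrac{\lambda_c}{2}A$ in the controlled one; the latter $c$ is unbounded, but by Corollary \ref{equibnd} the support of $f$ then remains inside a fixed compact set on every finite time interval, which confines all subsequent integrations. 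The equation is completed by the no-flux conditions \eqref{eq35}.

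Next I would remove the $A$-transport by the method of characteristics. Let $\Phi_t$ be the (bi-Lipschitz, globally defined) flow of the characteristic field $A\mapsto c(A)$ and $J_t=\partial_{A_0}\Phi_t>0$ its Jacobian; then $F(t,A_0,w):=f(t,\Phi_t(A_0),w)\,J_t(A_0)$ solves, for a.e.\ fixed $A_0$, the genuinely one-dimensional Fokker--Planck equation on the bounded interval $I$
\begin{equation*}
    \partial_t F=\partial_w^2\big(a(w)F\big)+\partial_w\big(\tilde b(t,A_0,w)F\big),
\end{equation*}
with $\tilde b(t,A_0,\cdot)$ bounded and Lipschitz in $w$, and with the no-flux boundary conditions inherited from \eqref{eq35}. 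Since $\Phi_t$ is a homeomorphism and $J_t>0$, one has $f\ge0$ a.e.\ if and only if $F\ge0$ a.e., so it suffices to prove the latter for a.e.\ $A_0$.

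This last point I would obtain by adapting \cite[Proposition 1]{2}. Fix a convex $\beta_\delta\in C^2(\R)$ with $\beta_\delta\equiv0$ on $[0,+\infty)$, $\beta_\delta(s)=-s-\tfrac\delta2$ for $s\le-\delta$, $-1\le\beta_\delta'\le0$, and $\beta_\delta''\ge0$ supported in $[-\delta,0]$, and set $\Gamma_\delta(s)=\int_0^s\sigma\beta_\delta''(\sigma)\,\dd\sigma$, so that $|\Gamma_\delta|\le\delta$. Testing the equation for $F$ against $\beta_\delta'(F)$ and integrating over $I$: the boundary contributions of the diffusion and of the drift at $w=\pm1$ combine into $\beta_\delta'(F)$ times the no-flux flux and hence vanish; the genuine second-order contribution equals $-\int_I a(w)\beta_\delta''(F)(\partial_wF)^2\,\dd w\le0$; and a further integration by parts recasts all the remaining first-order terms in the form $\int_I\Gamma_\delta(F)\,(\text{bounded})\,\dd w$, which is $\le C\delta$ with $C$ depending only on $\norm{\partial_w\tilde b}_{L^\infty}$ and $\norm{a''}_{L^\infty}$. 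Therefore $\tfrac{\dd}{\dd t}\int_I\beta_\delta(F)(t,A_0,w)\,\dd w\le C\delta$; integrating in time and letting $\delta\to0$ (using $\beta_\delta\uparrow(\cdot)_-$ pointwise and $\beta_\delta(F^\init)=0$ since $F^\init\ge0$) yields $\int_I F_-(t,A_0,w)\,\dd w\le0$, that is $F\ge0$, and therefore $f\ge0$ a.e.

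The main obstacle is making these integrations by parts rigorous in the presence of the degeneracy $a(w)\propto(1-w^2)$: solutions can genuinely blow up at $w=\pm1$ (as the Beta-type profiles \eqref{eqbeta} already show), so the vanishing of the boundary terms really relies on the precise form of the no-flux conditions \eqref{eq35}, and the renormalized identity is valid only under a minimal regularity such as $\sqrt{a}\,\partial_wF\in L^2_{\mathrm{loc}}$, conveniently provided by the reduction to the bounded interval $I$ together with $L^q$-type estimates in the spirit of Lemma \ref{lemma:norm}. In comparison, the nonlocality of $\mathcal K[f]$ and the unbounded $A$-drift of the controlled model are harmless once the freezing and the characteristic reduction have been carried out.
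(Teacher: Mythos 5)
Your argument is correct and rests on the same underlying philosophy as the paper's proof --- a Kruzhkov-type renormalization showing that the negative part cannot be created --- but it is organized quite differently. The paper works directly on the full $(A,w)$-problem: it multiplies by a regularized sign function $\mathrm{sign}_\epsilon(f)$, integrates by parts over $\R\times I$, shows that the drift and $A$-transport contributions can be rewritten as exact derivatives of $\mathrm{sign}_\epsilon(f)f-|f|_\epsilon$ and hence vanish as $\epsilon\to0$, concludes that $t\mapsto\norm{f(t,\cdot)}_{L^1(\R\times I)}$ is non-increasing, and only then isolates the negative part via $f^-_\epsilon=\tfrac12(|f|_\epsilon-f)$ and mass conservation. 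You instead (i) freeze the nonlocal coefficients, (ii) disintegrate the $A$-transport along its characteristic flow so as to reduce to a family of one-dimensional degenerate Fokker--Planck problems on $I$, and (iii) run a direct negative-part renormalization with $\beta_\delta$ and the quantitative bound $|\Gamma_\delta|\le\delta$. Step (ii) is absent from the paper and is not needed there, since the $A$-drift term is disposed of inside the same integration by parts; your reduction buys a cleaner, genuinely 1D setting in which the degenerate boundary terms at $w=\pm1$ and the unbounded $A$-drift of the controlled model are handled modularly, at the price of introducing the flow/Jacobian machinery (note $\bar A$ is only Lipschitz, so $J_t$ is defined a.e., which is enough). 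Step (iii) is where you are actually sharper than the paper: the bound $\tfrac{\dd}{\dd t}\int_I\beta_\delta(F)\,\dd w\le C\delta$ makes quantitative precisely the step that the paper dispatches with ``integrate back by parts and take the limit $\epsilon\to0$''. Conversely, the paper's route has the advantage of producing the $L^1$-contraction estimate as an intermediate output, which is reused verbatim in the uniqueness argument of Proposition \ref{uniq}; your version would need to be rerun on the difference of two solutions to recover that. Both proofs share the same honest caveat, which you state explicitly: the integrations by parts near the degenerate boundary require a minimal regularity (e.g. $\sqrt{a}\,\partial_wF\in L^2_{\mathrm{loc}}$ and $D^2\in W^{2,\infty}$) that neither argument establishes from scratch.
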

    \begin{proof}
    We will only prove the result for model \eqref{eq29}, focusing on the operator $Q_p(f,f)$, as similar computations can be performed to study both $Q_\ell(f,f_\ell)$ and model \eqref{eq53}. More precisely, we consider the evolution equation
    \begin{equation} \label{eq:positivity initial}
        \begin{split}
            \partial_t f(t,A,w) &= \frac{\sigma_p^2}{2} \partial^2_w\left(D^2(w)f(t,A,w)\right) +\lambda_p(\bar{A}\omega_p+\varepsilon)\partial_w\left(\mathcal{K}[f](t,w)f(t,A,w)\right) \\[2mm] 
            & \qquad -\lambda_A \partial_A\Big((\bar{A}\omega_p+\varepsilon-a_p) f(t,A,w)\Big),
        \end{split} 
    \end{equation}
    completed with the appropriate no-flux boundary conditions \eqref{eq35}, and we recall that the operator $\mathcal{K}[f](t,w)$ reads
    \begin{equation*}
        \mathcal{K}[f](t,w) = \int_{\R\times I} (\bar{B}\omega_p+\varepsilon)G(w,v)(w-v) f(t,B,v)   \dd B   \dd v.
    \end{equation*}
    Given a parameter $\epsilon > 0$, we consider a regularized increasing approximation of the sign function $\textrm{sign}_\epsilon(u)$, $u \in \R$, and define the corresponding regularization $|f|_\epsilon(t,A,w)$ of $|f|(t,A,w)$ to be the primitive of $\textrm{sign}_\epsilon(f)(t,A,w)$ for any $(A,w) \in \R\times I$. We then multiply both sides of the above equation by $\textrm{sign}_\epsilon(f)$, integrate with respect to $w$ and $A$ and apply integration by parts to successively get
    \begin{equation*}
        \begin{split}
            \frac{\dd}{\dd t} \int_{\R\times I} |f|_\epsilon(t,A,w)   &\dd A   \dd w = - \lambda_p \int_{\R\times I}(\bar{A}\omega_p+\varepsilon)\mathcal{K}[f] \textrm{sign}_\epsilon'(f)(t,A,w) f(t,A,w) \partial_w f(t,A,w) \dd A \dd w \\[2mm]
            & -\frac{\sigma_p^2}{2} \int_{\R\times I} \partial_w D^2(w) \textrm{sign}_\epsilon'(f)(t,A,w) f(t,A,w) \partial_w f(t,A,w) \dd A \dd w \\[2mm]
            & -\frac{\sigma_p^2}{2} \int_{\R\times I} \textrm{sign}_\epsilon'(f)(t,A,w) \Big(\partial_w f(t,A,w)\Big)^2 D^2(w) \dd A \dd w \\[2mm]
            & +\lambda_A \int_{\R\times I} (\bar{A}\omega_p+\varepsilon-a_p) \textrm{sign}_\epsilon'(f)(t,A,w) f(t,A,w) \partial_A f(t,A,w) \dd A \dd w,
        \end{split} 
    \end{equation*}
    where the two middle lines were obtained by developing the derivative $\partial_w \Big(D^2(w)f(t,A,w)\Big)$. Now, noticing that we can rewrite
    \begin{equation*}
        \begin{split}
            & \textrm{sign}_\epsilon'(f) f \partial_w f = \partial_w\Big(\textrm{sign}_\epsilon(f)f - |f|_\epsilon\Big), \\[2mm]
            & \textrm{sign}_\epsilon'(f) f \partial_A f = \partial_A\Big(\textrm{sign}_\epsilon(f)f - |f|_\epsilon\Big),
        \end{split}
    \end{equation*}
    we can integrate back by parts the three terms containing these factors (with respect to $A$ the first one, with respect to $w$ the other two) and take the limit $\epsilon \to 0$, to prove that they all vanish. Since the second to last term is nonpositive for any $\eps$, we thus conclude that the $L^1(\R\times I)$ norm of $f(t,A,w)$ is non-increasing, namely
    \begin{equation*}
        \frac{\dd}{\dd t} \int_{\R\times I} |f|(t,A,w) \dd A \dd w \leq 0.
    \end{equation*}
    Introducing now the negative part $f_\epsilon^-(t,A,w)$ of the solution via the same regularization as
    \begin{equation*}
        f_\epsilon^-(t,A,w)=\frac{1}{2} \Big(|f|_\epsilon (t,A,w) - f(t,A,w)\Big), 
    \end{equation*}
    we exploit the fact that all operators on the right-hand side of equation \eqref{eq:positivity initial} are mass-preserving to infer that
    \begin{equation*}
        \begin{split}
            \frac{\dd}{\dd t} \int_{\R\times I} f_\epsilon^-(t,A,w) \dd A \dd w &= \frac{1}{2} \frac{\dd}{\dd t} \int_{\R\times I} |f|_\epsilon(t,A,w) \dd A \dd w - \frac{1}{2} \frac{\dd}{\dd t} \int_{\R\times I} f(t,A,w) \dd A \dd w \\[2mm]
            &= \frac{1}{2} \frac{\dd}{\dd t} \int_{\R\times I} |f|_\epsilon(t,A,w) \dd A \dd w,
        \end{split}
    \end{equation*}
    and that in the limit $\epsilon \to 0$ it holds
    \begin{equation*}
        \frac{\dd}{\dd t} \int_{\R\times I} f^-(t,A,w) \dd A \dd w \leq 0,
    \end{equation*}
    for all $t > 0$, concluding that the solution $f(t,A,w)$ remains indeed nonnegative. 
    \end{proof}

    \begin{remark}
        From Proposition \ref{nonneg} it follows that the $L^1(\R\times I)$ regularity of the initial datum is preserved, since the model is mass-preserving. Note that we cannot expect more than $L^1$ regularity, because the transport operator along the activity leads to the formation of a Dirac delta in the limit $ t \to +\infty$.
    \end{remark}   

    \begin{proposition}\label{uniq}
        Let us consider two solutions $f_1(t,A,w)$ and $f_2(t,A,w)$ of either the uncontrolled model \eqref{eq29} or of the controlled one \eqref{eq53}, and assume that $\mathcal{K}[f_1](t,w) = \mathcal{K}[f_2](t,w)$ for a.e. $w \in  I$ and any $t > 0$. For any initial conditions $f_1^\init, f_2^\init \in L^1(\R\times I)$, if $f_1^\init(A,w) = f_2^\init(A,w)$ for a.e. $A \in \R$ and $w \in  I$, then $f_1(t,A,w) = f_2(t,A,w)$ for a.e. $A \in \R$, $w \in  I$, and for any $t > 0$. 
    \end{proposition}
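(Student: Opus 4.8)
The plan is to exploit the fact that, once the nonlocal coefficient $\mathcal{K}$ is fixed, both the uncontrolled model \eqref{eq29} and the controlled one \eqref{eq53} become linear: the difference $u(t,A,w) = f_1(t,A,w) - f_2(t,A,w)$ then solves a linear Fokker--Planck equation with vanishing initial datum and homogeneous no-flux boundary conditions, and one concludes $u \equiv 0$ by an $L^1(\R\times I)$ stability estimate identical in spirit to the one in the proof of Proposition \ref{nonneg}.

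As in that proof, I would work with \eqref{eq29} restricted to the operator $Q_p$, the term $Q_\ell$ and the controlled model being handled by the very same computations --- in the controlled case one simply replaces the transport term $-\lambda_A\partial_A\big((\bar{A}\omega_p+\varepsilon-a_p)\,\cdot\,\big)$ by its convex combination with the conservative term $\frac{\lambda_c}{2}\partial_A(A\,\cdot\,)$, and uses Corollary \ref{equibnd} to keep the $A$-support equi-bounded. Since by assumption $\mathcal{K}[f_1](t,w) = \mathcal{K}[f_2](t,w) =: \mathcal{K}(t,w)$ for a.e. $w\in I$ and all $t>0$, subtracting the equations satisfied by $f_1$ and $f_2$ gives
\begin{equation*}
    \partial_t u = \frac{\sigma_p^2}{2}\partial_w^2\big(D^2(w)u\big) + \lambda_p(\bar{A}\omega_p+\varepsilon)\partial_w\big(\mathcal{K}(t,w)u\big) - \lambda_A\partial_A\big((\bar{A}\omega_p+\varepsilon-a_p)u\big),
\end{equation*}
completed with the homogeneous version of \eqref{eq35}, which $u$ inherits from $f_1$ and $f_2$ because every boundary flux is linear in the distribution once $\mathcal{K}$ (and $f_\ell$) is frozen. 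Note that $\mathcal{K}(t,w)$ is a bounded coefficient, since $|\mathcal{K}(t,w)|\le 2(\omega_p+\varepsilon)$ follows from $0\le\bar{G}\le 1$, $|w-v|\le 2$, and conservation of the unit total mass.

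Then I would reproduce the regularized $L^1$ computation of Proposition \ref{nonneg}: fix $\epsilon>0$, take a smooth nondecreasing approximation $\mathrm{sign}_\epsilon$ of the sign function with primitive $|\cdot|_\epsilon$, multiply the equation for $u$ by $\mathrm{sign}_\epsilon(u)$, integrate over $\R\times I$, and integrate by parts. Using the identities $\mathrm{sign}_\epsilon'(u)\,u\,\partial_w u = \partial_w\big(\mathrm{sign}_\epsilon(u)u-|u|_\epsilon\big)$ and $\mathrm{sign}_\epsilon'(u)\,u\,\partial_A u = \partial_A\big(\mathrm{sign}_\epsilon(u)u-|u|_\epsilon\big)$, each drift and transport contribution is turned into a boundary term (which vanishes by the no-flux conditions at $w=\pm 1$ and by the decay / equi-compact-support conditions at $A\to\pm\infty$) plus a term that disappears as $\epsilon\to 0$, while the genuine second-order term produces the nonpositive quantity $-\frac{\sigma_p^2}{2}\int_{\R\times I}\mathrm{sign}_\epsilon'(u)\big(\partial_w u\big)^2 D^2(w)\,\dd A\,\dd w\le 0$. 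Letting $\epsilon\to 0$ yields
\begin{equation*}
    \frac{\dd}{\dd t}\int_{\R\times I}|u|(t,A,w)\,\dd A\,\dd w\le 0,
\end{equation*}
hence $\norm{u(t,\cdot,\cdot)}_{L^1(\R\times I)}\le\norm{f_1^\init-f_2^\init}_{L^1(\R\times I)}=0$ for all $t>0$, i.e. $f_1 = f_2$ a.e., which is the claim.

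The only genuinely delicate step --- exactly as in Proposition \ref{nonneg} --- is the rigorous justification of the repeated integrations by parts and of the passage to the limit $\epsilon\to 0$, in particular checking that the boundary contributions vanish and that the terms $\mathrm{sign}_\epsilon'(u)\,u\,\partial u$ integrate to zero in the limit. Since the diffusion and drift operators appearing here share precisely the conservative structure of those treated in Proposition \ref{nonneg}, with the bounded coefficient $\mathcal{K}(t,w)$ playing the role of $\mathcal{K}[f](t,w)$, this introduces no new obstacle and the argument transfers essentially verbatim.
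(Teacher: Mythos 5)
Your proposal is correct and follows essentially the same route as the paper: linearize the equation for the difference $f_1-f_2$ using the hypothesis $\mathcal{K}[f_1]=\mathcal{K}[f_2]$, then run the regularized-sign $L^1$-contraction argument from Proposition \ref{nonneg} to get $\frac{\dd}{\dd t}\norm{f_1-f_2}_{L^1(\R\times I)}\le 0$ and conclude from the vanishing initial difference. The extra details you supply (the explicit bound on $\mathcal{K}$, the inheritance of the homogeneous boundary conditions) are consistent with, and slightly more careful than, the paper's own presentation.
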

    
    \begin{proof}
        Let us still prove the result for model \eqref{eq29} and for the operator $Q_p(f,f)$ only, which amounts to consider once again equation \eqref{eq:positivity initial}. The assumption that the integral operators $\mathcal{K}[f_1]$ and $\mathcal{K}[f_2]$ coincide almost everywhere obviously leads to a linearization of the model. Therefore, one deduces that the difference of two solutions $f = f_1 - f_2$ solves
        \begin{equation} \label{eq:uniqueness initial}
        \begin{split}
            \partial_t f(t,A,w) &= \frac{\sigma_p^2}{2} \partial^2_w\Big(D^2(w) f(t,A,w)\Big) +\lambda_p(\bar{A}\omega_p+\varepsilon)\partial_w\Big(\mathcal{K}[f_1](t,w) f(t,A,w)\Big) \\[2mm] 
            & \qquad -\lambda_A \partial_A\Big((\bar{A}\omega_p+\varepsilon-a_p) f(t,A,w)\Big),
        \end{split} 
    \end{equation}
    and satisfies initially $f^\init(A,w) = 0$ for a.e. $A \in \R$ and $w \in  I$. We can then proceed as in the proof of Proposition \ref{nonneg}, by considering again an increasing regularization of the sign function, depending on a parameter $\epsilon > 0$, and using it to define an approximation $|f|_\epsilon(t,A,w)$ of $|f|(t,A,w)$. After multiplication by $\textrm{sign}_\epsilon(f)(t,A,w)$ on both sides of \eqref{eq:uniqueness initial}, and applying the same reasoning as before, one recovers the estimate
    \begin{equation*}
        \frac{\dd}{\dd t} \int_{\R\times I} |f|(t,A,w) \dd A \dd w \leq 0,
    \end{equation*}
    in the limit $\epsilon \to 0$. Since the $L^1(\R\times I)$ norm of $f(t,A,w)$ does not increase over time and $\norm{f^\init}_{L^1(\R\times I)} = 0$, we conclude that $\norm{f}_{L^1(\R\times I)} = 0$ for any $t > 0$ and thus $f_1(t,A,w) = f_2(t,A,w)$ for a.e. $A \in \R$ and $w \in  I$, and for any $t > 0$.
    \end{proof}

    We conclude by providing a sufficient condition which ensures that $\mathcal{K}[f_1](t,w) = \mathcal{K}[f_2](t,w)$ is satisfied for a.e. $w \in  I$ and any $t > 0$. 

    \begin{lemma} \label{lemma1}
        Consider two solutions $f_1(t,A,w)$ and $f_2(t,A,w)$ of either the uncontrolled model \eqref{eq29} or of the controlled one \eqref{eq53}, such that their corresponding initial conditions satisfy $f_1^\init(A,w) = f_2^\init(A,w)$ for a.e. $A \in \R$ and $w \in  I$. If the function $\mathcal{K}[f](t,w)$ is analytical in $t$ for a.e. $w \in I$, then $\mathcal{K}[f_1](t,w) = \mathcal{K}[f_2](t,w)$ for a.e. $w \in  I$ and any $t > 0$. 
    \end{lemma}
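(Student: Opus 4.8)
The plan is to leverage the analyticity in $t$ together with the crucial structural fact that $\mathcal{K}$ acts \emph{linearly} on $f$: it is this linearity that makes every Taylor coefficient of $t\mapsto\mathcal{K}[f](t,w)$ at $t=0$ a fixed functional of the initial datum $f^\init$ alone, so that $f_1^\init = f_2^\init$ forces all of them to coincide, and two analytic functions sharing the same Taylor expansion at $t=0$ agree for all $t>0$. (By contrast, the underlying Fokker--Planck equation is genuinely quadratic, which is precisely why uniqueness in Proposition~\ref{uniq} must be conditioned on $\mathcal{K}[f_1]=\mathcal{K}[f_2]$; Lemma~\ref{lemma1} supplies that condition.)

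First I would record that, since $\mathcal{K}[f](t,w) = \int_{\R\times I}(\bar B\omega_p+\varepsilon)G(w,v)(w-v)f(t,B,v)\,\dd B\,\dd v$ is linear and continuous in $f(t,\cdot,\cdot)$, one has $\frac{\dd}{\dd t}\mathcal{K}[f](t,w) = \mathcal{K}\bigl[\partial_t f\bigr](t,w)$ and, by iteration, $\frac{\dd^{n}}{\dd t^{n}}\mathcal{K}[f](t,w) = \mathcal{K}\bigl[\partial_t^{n}f\bigr](t,w)$ for every $n\ge1$, as long as the time derivatives $\partial_t^{n}f$ exist in $L^1(\R\times I)$. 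The interchange of $\partial_t$ with the $(B,v)$-integration defining $\mathcal{K}$ is justified after transferring, by integration by parts, the spatial derivatives carried by $Q_p(f,f)$ in \eqref{eq30} (respectively by the controlled operator \eqref{eq54}) onto the interaction kernel, the boundary contributions vanishing thanks to the no-flux conditions \eqref{eq35}.

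Next I would write the right-hand side of \eqref{eq29} (respectively \eqref{eq53}) as $\partial_t f = L[f] + B[f,f]$, where $L$ gathers all terms linear in $f$ — the $w$-diffusion and the $A$-transport, including the control term $\tfrac{\lambda_c}{2}\partial_A(Af)$ in the controlled case — and $B[g,h] = \lambda_p(\bar A\omega_p+\varepsilon)\partial_w\bigl(\mathcal{K}[g]\,h\bigr)$ is bilinear. Leibniz's rule then gives
\begin{equation*}
    \partial_t^{n}f = L\bigl[\partial_t^{n-1}f\bigr] + \sum_{j=0}^{n-1}\binom{n-1}{j}\,B\bigl[\partial_t^{j}f,\,\partial_t^{n-1-j}f\bigr], \qquad n\ge1,
\end{equation*}
so that, by induction on $n$, $\partial_t^{n}f\big|_{t=0}$ is a universal expression — assembled from $\partial_w$, $\partial_A$, the operator $\mathcal{K}$, multiplication by the fixed coefficient functions, and finitely many products — applied to $f^\init$; in particular it depends on the solution only through $f^\init$. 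Hence, for every $n\ge0$ and a.e.\ $w\in I$,
\begin{equation*}
    \left.\frac{\dd^{n}}{\dd t^{n}}\mathcal{K}[f_1](t,w)\right|_{t=0} = \mathcal{K}\bigl[\partial_t^{n}f_1|_{t=0}\bigr](w) = \mathcal{K}\bigl[\partial_t^{n}f_2|_{t=0}\bigr](w) = \left.\frac{\dd^{n}}{\dd t^{n}}\mathcal{K}[f_2](t,w)\right|_{t=0},
\end{equation*}
using $f_1^\init = f_2^\init$. Since by hypothesis $t\mapsto\mathcal{K}[f_i](t,w)$ is analytic on $\R_+$ for a.e.\ $w$, the difference $t\mapsto\mathcal{K}[f_1](t,w)-\mathcal{K}[f_2](t,w)$ is analytic on the connected interval $\R_+$ with all derivatives vanishing at $t=0$; by the principle of analytic continuation it vanishes identically, which is exactly the claim $\mathcal{K}[f_1](t,w)=\mathcal{K}[f_2](t,w)$ for a.e.\ $w\in I$ and all $t>0$.

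The step I expect to be the real obstacle is the rigorous justification, glossed over above, that $\partial_t^{n}f$ exists in $L^1(\R\times I)$, satisfies the displayed recursion, and commutes with $\mathcal{K}$ — equivalently, that the solution is regular enough in time near $t=0$ for the formal differentiation of \eqref{eq29}/\eqref{eq53} to be legitimate and for the integration by parts carrying spatial derivatives onto the kernel of $\mathcal{K}$ to be admissible for the given $G$ and $D$. Once this bookkeeping is secured, the ``determined by $f^\init$ only'' conclusion and the ensuing analyticity argument are routine, and mirror the recursive time-derivative computations already carried out in the proofs of Propositions~\ref{prop3} and \ref{prop4}.
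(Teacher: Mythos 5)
Your proposal is correct and follows essentially the same strategy as the paper: both arguments reduce the claim to showing that all time derivatives of $\mathcal{K}[f_1]-\mathcal{K}[f_2]$ vanish at $t=0$ (because, via the evolution equation and integration by parts against the no-flux conditions, each such derivative is a functional of the common initial datum alone) and then invoke analyticity in $t$. Your version merely makes the induction explicit through the Leibniz recursion on the bilinear term, whereas the paper computes the $n=1$ case in detail and states that higher orders are similar; the regularity caveats you flag are likewise left implicit in the paper.
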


    \begin{proof}
        Since the initial distributions coincide, it suffices to show that 
        \begin{equation*}
            \partial^n_t \Big(\mathcal{K}[f_1](t,w) - \mathcal{K}[f_2](t,w)\Big)  \Big|_{t=0} = 0,
        \end{equation*}
        for any $n>0$ and for a.e. $w \in I$. For the sake of clarity, we will only prove the result for model \eqref{eq29} and focusing on the operator $Q_p(f,f)$, as similar computations can be performed to study both $Q_\ell(f,f_\ell)$ and model \eqref{eq53}. The no-flux boundary conditions \eqref{eq35} imply 
        \begin{equation*}
        \begin{split}
            \partial_t & (\mathcal{K}[f_1](t,w) - \mathcal{K}[f_2](t,w))  \Big|_{t=0} = \int_{\R \times I} (\bar{B}\omega_p + \eps) G(w,v) (w-v) \partial_t\Big(f_1(t,B,v)-f_2(t,B,v)\Big) \dd B   \dd v \Bigg|_{t=0} \\[2mm] 
            =& \int_{\R \times I} (\bar{B}\omega_p + \eps) \partial_v\Big(G(w,v)(w-v)\Big) \left( \frac{\sigma_p^2}{2} \partial_v\Big(D^2(v)(f_2^\init(B,v)-f_1^\init(B,v))\Big)\right) \dd B   \dd v \\[2mm] 
            & + \int_{\R \times I} (\bar{B}\omega_p + \eps) \partial_v\Big(G(w,v)(w-v)\Big) \lambda_p (\bar{B}\omega_p + \eps) \Big(\mathcal{K}[f_2^\init](v) f_2^\init(B,v)-\mathcal{K}[f_1^\init](v) f_1^\init(B,v)\Big) \dd B   \dd v \\[2mm]
            & + \int_{\R \times I} \partial_B (\bar{B}\omega_p + \eps) G(w,v) (w-v) \lambda_A (\bar{B}\omega_p + \eps - a_p) \Big(f_1^\init(B,v)-f_2^\init(B,v)\Big) \dd B   \dd v \\[4mm]
            =&\ 0,
        \end{split}
        \end{equation*}
        for a.e. $w \in I$, since the initial distributions coincide. The computations for higher values of $n > 1$ are similar. 
    \end{proof}
    
\section{Conclusions} \label{concl}

    \noindent In this work we proposed a kinetic approach to model opinion dynamics among a population of agents that have a certain propensity to interact with each other, based on their variable level of activity. The population was divided into three different classes: active agents representing the socially active part of the population, who were likely to interact; inactive agents having no propensity to interact and discuss with others (for example, in a political setting these agents could be thought of as part of the nowadays widespread phenomenon of abstentionism \cite{12, 14, 13}), who had a small probability to interact; undecided agents being in the middle between active and inactive individuals. 
    
    The model highlighted that social interactions are of fundamental importance for preventing opinion polarization and the emergence of extreme opinions. The model showed that, in order to reach a consensus, compromise dynamics must be more influential than self-thinking ones. Moreover, we proved that an effective compromise process among active agents could prevent the formation of extreme opinions, while the absence of compromise among inactive individuals could instead lead to polarization. In particular, it is not possible to observe consensus formation among inactive agents and, at the same time, opinion polarization among active ones. 

    We presented a simple, but effective control strategy to increase the number of active individuals and decrease the number of inactive ones. The idea of the strategy was that each agent could suddenly start to show interest in a certain topic (for example, politics), therefore rapidly increasing the number of their social interactions in this regard. Similarly, an individual could lose interest in that topic, thus reducing the number of their related interactions. We showed that if the latter two processes were correctly balanced, the strategy was indeed effective, but if such balance was not met, an opposite effect might occur. We remark that this strategy reproduces a completely realistic phenomenon that could manifest in any society, hence it does not limit its possible range of applications in a social sciences framework. 

    We also studied the effects of additional interactions with opinion leaders \cite{2}, that could be regarded as the media or highly influential individuals. Through these external factors, we were able to control the evolution of the average opinion inside the population, ensuring that it converges toward the average opinion of the leaders. In particular, influencing the opinion of active agents was easier than influencing the opinion of inactive ones, since the latter were more reluctant to interact. This could be interpreted as the fact that it is easier to inform the segment of the population that remains informed and watches, for example, informative television programs. Note that it could also be possible to consider leaders that spread conspiracy theories and contribute to disinformation. However, in such case it would be reasonable to assume that not all the population would be equally influenced by them (i.e., $G \not\equiv 1$), meaning that the average opinion of the population would not converge toward the average opinion of the leaders (as one would expect). 
    
    Finally, we carried out a mathematical analysis to investigate the large time behavior of the marginal distributions of the solution $f(t,A,w)$ to the controlled model. At first, we studied the evolution of the activity level independently of the opinion dynamics. We were thus able to prove a result on the weak convergence to equilibrium for the marginal $g(t,A)$, toward a Dirac delta. In particular, we showed that all agents tend to become active in finite time, simplifying the structure of the Fokker--Planck equation that described the evolution of the other marginal $h(t,w)$, and allowing us to prove a result on the strong convergence to equilibrium for $h(t,w)$, toward a Beta distribution. Assuming that socially active individuals would tend to form a consensus, this demonstrated that through our control strategy it was possible to suppress opinion polarization. The most interesting scenario occurred when both interactions agent--agent and agent--leaders were considered. Indeed, in this situation the average opinion of the population was not preserved over time and the system evolved under a Fokker--Planck equation whose adjoint was given by a Wright--Fisher-type model with time-dependent coefficients \cite{CheStr,EpsMaz1,EpsMaz2}. The proof of our this result involved the use of the relative entropy method \cite{20} to recover the rate of convergence to equilibrium. 

    Future research will concern numerical simulations of our models, suitably extended to include an underlying social structure (described for example through a graphon \cite{3}), that could allow us to consider the formation of clusters within the population and to identify the most socially active groups \cite{1}.
    
\bigskip
\bigskip
\noindent \textbf{Acknowledgments.} The authors are members and acknowledge the support of {\it Gruppo Nazionale di Fisica Matematica} (GNFM) of {\it Istituto Nazionale di Alta Matematica} (INdAM). AB acknowledges the support of the European Union’s Horizon Europe research and innovation programme, under the Marie Skłodowska-Curie grant agreement No. 101110920, project MesoCroMo (A Mesoscopic approach to Cross-diffusion Modelling in population dynamics). AB also acknowledges the support of the INdAM--GNFM project CUP E5324001950001 (Multi-species non-Maxwellian Fokker--Planck models inferred from local non-equilibrium distributions). JB acknowledges the support of the University of Parma through the action \textit{Bando di Ateneo 2022 per la ricerca}, co-funded by MUR-Italian Ministry of University and Research - D.M. 737/2021 - PNR - PNRR - NextGenerationEU (project ``Collective and Self-Organised Dynamics: Kinetic and Network Approaches''). JB also thanks the support of the project PRIN 2022 PNRR ``Mathematical Modelling for a Sustainable Circular Economy in Ecosystems'' (project code P2022PSMT7, CUP D53D23018960001) funded by the European Union - NextGenerationEU, PNRR-M4C2-I 1.1, and by MUR-Italian Ministry of University and Research. 

\bigskip
\bigskip
\noindent \textbf{Disclaimer.} Funded by the European Union. Views and opinions expressed are however those of the author(s) only and do not necessarily reflect those of the European Union or of the European Research Executive Agency (REA). Neither the European Union nor the granting authority can be held responsible for them.

\begin{figure}[h!]
\begin{flushleft}
\includegraphics[scale=0.3]{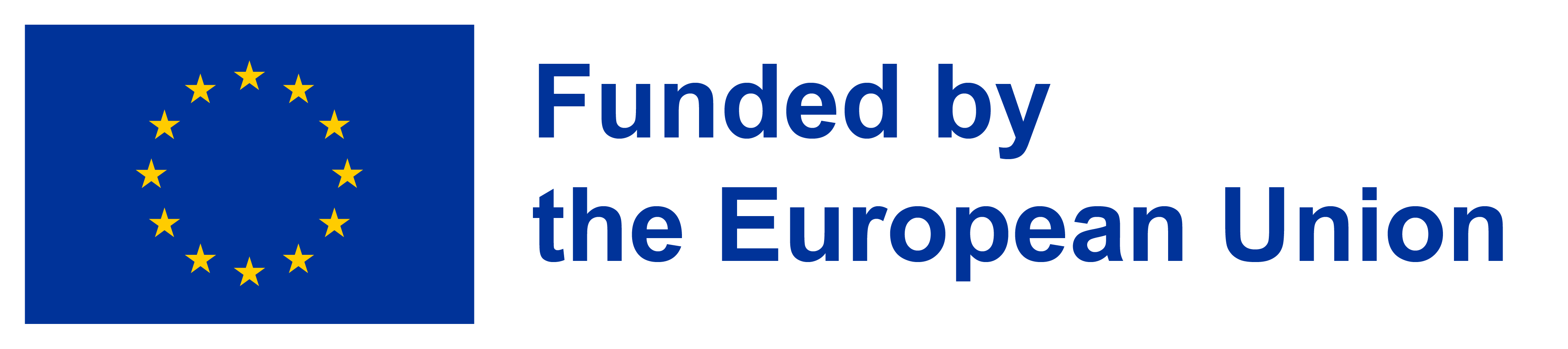}
\end{flushleft}
\end{figure}

\bigskip
\noindent \textbf{Funding.} This research has been supported by the European Union’s Horizon Europe research and innovation programme (project ``MesoCroMo – A Mesoscopic approach to Cross-diffusion Modelling in population dynamics'', Marie Skłodowska-Curie grant agreement No. 101110920), by INdAM--GNFM (project ``Multi-species non-Maxwellian Fokker--Planck models inferred from local non-equilibrium distributions'', CUP E5324001950001), by the European Union - NextGenerationEU and by the MUR-Italian Ministry of University and Research (project “Collective and Self-Organised Dynamics: Kinetic and Network Approaches”, action “Bando di Ateneo 2022 per la ricerca” D.M. 737/2021 - PNR - PNRR, and project “Mathematical Modelling for a Sustainable Circular Economy in Ecosystems”, PRIN 2022 PNRR, code P2022PSMT7, CUP D53D23018960001).


\bigskip
\nocite{*}
\bibliographystyle{plain}
\bibliography{Bibliography_OA}
\bigskip
\bigskip

\setlength\parindent{0pt}

\end{document}